\numberwithin{equation}{section}
\newcommand\omitstuff[1]{}
\newcommand{\rpart}{{\boldsymbol{\large{\lambda}}}}
\newcommand{\E}{\mathbb{E}}
\theoremstyle{plain}
\newtheorem{theorem}{Theorem}[section]
\newtheorem{proposition}[theorem]{Proposition}
\newtheorem{corollary}[theorem]{Corollary}
\newtheorem{lemma}[theorem]{Lemma}
\theoremstyle{definition}
\newtheorem{definition}[theorem]{Definition}
\newtheorem{remark}[theorem]{Remark}
\newtheorem{problem}[theorem]{Problem}
\newtheorem{ex}[theorem]{Example}
\newcommand{\wt}{\text{wt}}
\DeclareMathOperator{\inv}{inv}
\DeclareMathOperator{\des}{des}
\DeclareMathOperator{\maj}{maj}
\DeclareMathOperator{\pr}{Pr}
\DeclareMathOperator{\negative}{negsum}
\DeclareMathOperator{\Sym}{\mathfrak{S}} 
\DeclareMathOperator{\supp}{supp}
\title{Moments of Colored Permutation Statistics on Conjugacy Classes\footnote{This work was completed in part at the 2022 Graduate Research Workshop in Combinatorics, which was supported in part by NSF grant \#1953985 and a generous award from the Combinatorics Foundation. ML was partially supported by J. A. Grochow's NSF award CISE-2047756. MY was partially supported by the University of Denver's Professional Research Opportunities for Faculty Fund 80369-145601.}}
\author[1]{Jesse Campion Loth}
\author[2]{Michael Levet}
\author[3]{Kevin Liu}
\author[4]{Sheila~Sundaram}
\author[5]{Mei Yin} 
\affil[1]{Department of Mathematics, Simon Fraser University, Burnaby, BC, Canada}
\affil[2]{Department of Computer Science, College of Charleston, Charleston, SC, USA}
\affil[3]{Department of Mathematics and Computer Science, University of the South, Sewanee, TN, USA}
\affil[4]{School of Mathematics, University of Minnesota, Minneapolis, MN, USA}
\affil[5]{Department of Mathematics, University of Denver, Denver, CO, USA}
\begin{document}
\maketitle

\begin{abstract}
In this paper, we consider the moments of statistics on conjugacy classes of the colored permutation groups $\mathfrak{S}_{n,r}=\mathbb{Z}_r\wr \mathfrak{S}_n$. We first show that any fixed moment of a statistic coincides on all conjugacy classes where all cycles have sufficiently long length. Additionally, for permutation statistics that can be realized via a process we call order-invariant extension, these moments are polynomials in $n$. Finally, for the descent statistic on the hyperoctahedral group $B_n\cong \mathfrak{S}_{n,2}$, we show that its distribution on conjugacy classes without short cycles satisfies a central limit theorem. Our results build on and generalize previous work of Fulman (\textit{J. Comb. Theory Ser. A.}, 1998), Hamaker and Rhoades (arXiv, 2022), and Campion Loth, Levet, Liu, Stucky, Sundaram, and Yin (arXiv, 2023). In particular, our techniques utilize the latter combinatorial framework.
\end{abstract}

\bigskip 

\noindent \textbf{Keywords.} colored permutation, Coxeter group, hyperoctahedral group, moment, permutation statistic\\

\noindent \textbf{2020 AMS Subject Classification.} 05A05, 05E05, 60C05

\thispagestyle{empty}

\newpage

\setcounter{page}{1}


\section{Introduction}

For a finite group $G$, a (real) \textit{statistic} is a map $X : G \to \mathbb{R}$, and such a statistic induces a probability distribution given by $\pr[X=k]=|X^{-1}(k)|/|G|$. When $G = \mathfrak{S}_{n}$ is the symmetric group, $X$ is called a \textit{permutation statistic}. The study of permutation statistics is a classical topic in algebraic combinatorics; Stanley's texts \cite{StanEC1, StanEC2} serve as a key reference in this area. 

The refinement of permutation statistics to conjugacy classes $C_{\lambda}$  of $\mathfrak{S}_n$ has also been studied, e.g., see \cite{GesselReutenauer,Brenti1993,CooperJonesZhuang2020}. One notable result of Fulman \cite{FULMAN1999390} is that the $k$th moment of the descent statistic coincides on all conjugacy class of $\mathfrak{S}_n$ without cycles of length $1,2,\ldots,2k$, and in fact, on these conjugacy classes, it equals the corresponding moment on $\mathfrak{S}_n$. Since then, various generalizations of this result for other statistics have been established. Recently, Hamaker and Rhoades \cite{hamaker2022characters} introduced a notion of \textit{local} permutation statistics. Using representation-theoretic methods, they established that for a fixed statistic, each moment depends only on $n$ and the number of ``short" cycles in $\lambda$. In particular, this moment coincides on all conjugacy classes $C_{\lambda}$ when there are no ``short" cycles. Independently, Campion Loth, Levet, Liu, Stucky, Sundaram, and Yin \cite{GRWCPermutationStatistics} established similar results for these conjugacy classes in $\mathfrak{S}_n$ using only combinatorial techniques.

In this paper, we build on the framework in \cite{GRWCPermutationStatistics} to investigate the moments on conjugacy classes of \emph{colored permutation statistics}, i.e., statistics on the colored permutation group $\mathfrak{S}_{n,r}$, that is, the wreath product $\mathbb{Z}_r\wr \mathfrak{S}_n$. In contrast to the vast literature on permutation statistics in $\mathfrak{S}_{n}$, there has been considerably less work on statistics for the colored permutation groups $\mathfrak{S}_{n,r}$. The distributions of specific colored permutation statistics over the entire group $\mathfrak{S}_{n,r}$ have been studied by Steingr\'{i}msson \cite{steingrim94}, Fire \cite{fire2005statistics,FireFPSAC}, and Moustakas \cite{Moustakas2018TheED, MOUSTAKAS2021102177}, though we are not aware of any prior work on individual conjugacy classes except when $r=2$. In this case, the colored permutation groups $\mathfrak{S}_{n,2}$ are isomorphic to the hyperoctahedral groups $B_n$, which are the type $B$ Coxeter groups. Statistics on the entire group $B_n$ and its conjugacy classes have been studied extensively, e.g., by Reiner \cite{ReinerEuropJC1993-1, ReinerEuropJC1993-2,ReinerDiscMath1995}, Adin and Roichman \cite{AR2001,AdinRoichman1} with Gessel \cite{AdinGesselRoichman} and Brenti \cite{AdinBrentiRoichman}, and Brenti and Carnevale \cite{BrentiCarnevaleOddLength, BrentiCarnevaleOddLengthDiagrams}. Statistics on general Coxeter groups have also been studied, e.g., by Kahle and Stump \cite{Kahle2018}.

\noindent \\ \textbf{Main Results.} Since any fixed moment of a permutation statistic coincides on all conjugacy classes with no ``short" cycles, one might expect similar properties to hold for colored permutation statistics. We show that this is indeed the case. Similar to $\mathfrak{S}_n$, conjugacy classes of $\mathfrak{S}_{n,r}$ are determined by cycle type, with the primary difference in $\mathfrak{S}_{n,r}$ being that cycles now have a notion of color. Similar to the usage of $C_{\lambda}$ for conjugacy classes of $\mathfrak{S}_{n}$, we will use $C_{\rpart}$ to denote a conjugacy class of $\mathfrak{S}_{n,r}$, where $\rpart$  records cycle type.

For a colored permutation statistic $X:\mathfrak{S}_{n,r} \to \mathbb{R}$, let $\E_{\rpart}[X]$ denote its mean on the conjugacy class $C_{\rpart}$ of $\mathfrak{S}_{n,r}$. By generalizing the definitions in \cite{GRWCPermutationStatistics} and \cite{hamaker2022characters}, we define a notion of a colored permutation statistic having \emph{degree at most $m$}. This corresponds to the statistic being a linear combination of indicator functions for \emph{partial colored permutations} of size at most $m$. Using this, we establish the following result, which formalizes the statement that moments of colored permutation statistics stabilize on conjugacy classes without ``short" cycles. 

\begin{theorem}[cf. {\cite[Theorem~6.2]{hamaker2022characters}} and {\cite[Theorem~7.16]{GRWCPermutationStatistics}}] \label{thm:MainIndependence}
Suppose $X:\mathfrak{S}_{n,r}\to \mathbb{R}$ has degree at most $m$. For any $k\geq 1$, the $k$th moment $\E_{\rpart}[X^k]$ coincides on all conjugacy classes $C_{\rpart}$ of $\mathfrak{S}_{n,r}$ with no cycles of length $1,2,\ldots,mk$. 
\end{theorem}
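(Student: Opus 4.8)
The plan is to reduce, in three stages, to the known statement over $\mathfrak{S}_n$: first from arbitrary moments of $X$ to the expectation of a single indicator, then from colored partial permutations to uncolored ones, and finally an appeal to \cite[Theorem~7.16]{GRWCPermutationStatistics}.

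\emph{Reduction to a single indicator.} Write $X=\sum_{|\tau|\le m}a_\tau\mathbf{1}_\tau$ as a combination of indicators of partial colored permutations of size $\le m$. A product $\mathbf{1}_{\tau_1}\cdots\mathbf{1}_{\tau_k}$ vanishes unless $\tau_1,\dots,\tau_k$ are pairwise compatible, in which case it equals $\mathbf{1}_{\tau_1\cup\cdots\cup\tau_k}$, a single indicator of size $\le mk$; hence $X^k$ has degree $\le mk$ with coefficients independent of $\rpart$. By linearity it suffices to prove that for every partial colored permutation $\tau$ of size $s\le mk$, the value $\E_{\rpart}[\mathbf{1}_\tau]$ is the same on all $C_{\rpart}$ with no cycles of length $\le mk$. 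If the underlying partial injection of $\tau$, viewed as a digraph, already contains a directed cycle, that cycle has length $\le s\le mk$, so no element of such a $C_{\rpart}$ extends $\tau$ and $\E_{\rpart}[\mathbf{1}_\tau]=0$ identically. Thus we may assume the digraph of $\tau$ is a disjoint union of directed paths, so $\tau$ prescribes the $\mathbb{Z}_r$-colors of exactly its $s$ distinct "source" vertices.

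\emph{Separating the colors (the crux).} Let $\bar\tau$ be the underlying uncolored partial permutation and $|\rpart|$ the underlying cycle type. The projection $\mathfrak{S}_{n,r}\to\mathfrak{S}_n$ restricts to a surjection $C_{\rpart}\to C_{|\rpart|}$ that is $K$-to-one, where $K$ — the number of colorings of a fixed permutation of type $|\rpart|$ whose cycle color-sums realize $\rpart$ — depends only on $\rpart$. Fix $\bar\sigma\in C_{|\rpart|}$ with $\bar\sigma\supseteq\bar\tau$; I claim that among the $K$ colorings $\vec c$ with $(\vec c,\bar\sigma)\in C_{\rpart}$, exactly $K/r^{s}$ also match the colors prescribed by $\tau$. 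Indeed, the group $H=\{\vec d\in\mathbb{Z}_r^n:\sum_{x\in Q}d_x=0\text{ for each cycle }Q\text{ of }\bar\sigma\}$ acts freely by translation on $\mathbb{Z}_r^n$, preserves the set $\mathcal V$ of $\rpart$-realizing colorings of $\bar\sigma$, and has order $r^{\,n-p}$ with $p$ the number of cycles. Restricting to the $s$ source coordinates gives a homomorphism $\rho\colon H\to\mathbb{Z}_r^{s}$ which is \emph{surjective}, because each cycle $Q$ has length $>mk\ge s\ge(\text{number of sources in }Q)$ and so contains a non-source vertex, allowing prescribed values on $Q$'s sources to be completed to a zero-sum vector on $Q$. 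Averaging over the $H$-orbits comprising $\mathcal V$ (each a coset of size $r^{\,n-p}$ on which $\rho$ hits every value of $\mathbb{Z}_r^{s}$ equally often) gives the claimed fraction $K/r^{s}$. Since $|C_{\rpart}|=K\,|C_{|\rpart|}|$ as well, we obtain
\[
\E_{\rpart}[\mathbf{1}_\tau]
=\frac{1}{r^{s}}\cdot\frac{\bigl|\{\bar\sigma\in C_{|\rpart|}:\bar\sigma\supseteq\bar\tau\}\bigr|}{|C_{|\rpart|}|}
=\frac{1}{r^{s}}\,\E^{\mathfrak{S}_n}_{|\rpart|}\!\bigl[\mathbf{1}_{\bar\tau}\bigr].
\]

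\emph{Conclusion and main obstacle.} The statistic $\mathbf{1}_{\bar\tau}$ on $\mathfrak{S}_n$ has degree $\le s\le mk$, and $|\rpart|$ has no cycles of length $\le mk$, so by \cite[Theorem~7.16]{GRWCPermutationStatistics} applied with $k=1$ the right-hand side above is the same for all admissible $\rpart$; hence so is $\E_{\rpart}[\mathbf{1}_\tau]$, and by the first reduction so is $\E_{\rpart}[X^k]$. I expect the genuinely colored-specific step to be the middle one: disentangling the color constraints imposed by $\tau$ from those imposed by the conjugacy class, and recognizing that the no-short-cycle hypothesis is exactly what guarantees every cycle of $\bar\sigma$ retains a ``free'' color slot after the sources are pinned, so that $\tau$'s color constraint costs a uniform factor $r^{s}$. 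An alternative that avoids passing through $\mathfrak{S}_n$ is to run the order-invariant-extension bookkeeping of \cite{GRWCPermutationStatistics} directly for colored partial permutations — distributing the paths of $\tau$ among the necessarily long cycles of $\sigma$, cyclically ordering each cycle's blocks, and filling in the remaining colors — and to check that the number of extensions in $C_{\rpart}$ divided by $|C_{\rpart}|$ collapses to an expression depending only on $n$, $r$, and the path-lengths; this is again where the bound $mk$ enters.
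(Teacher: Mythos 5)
Your proposal is correct, but it routes the key step differently from the paper. The first reduction (products of compatible indicators are indicators of size $\le mk$, hence $X^k$ has degree $\le mk$, and non-acyclic indicators vanish identically on classes without short cycles) is exactly the paper's Lemma~\ref{lem:HRLem4.2}, Corollary~\ref{cor:Thm7.16}, and the first half of Lemma~\ref{lem:omegasatisfiesK}. Where you diverge is in evaluating $\E_{\rpart}[I_{(K,\kappa)}]$ for acyclic $(K,\kappa)$: the paper stays inside the colored class, proving $\pr_{\rpart}[\omega \text{ satisfies } K]=\frac{1}{(n-1)\cdots(n-s)}$ by a fresh induction on $n$ and $s$ in the colored setting (Lemma~\ref{lem:omegasatisfiesK}), and then handling the color constraint via a $\mathbb{Z}_r^{s}$-action on $C_{\rpart}$ that increments $\tau(j_h)$ and compensates at another element of the same (necessarily long) cycle (Lemma~\ref{lem:HigherMomentsIndependence}). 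You instead fiber $C_{\rpart}$ over the uncolored class $C_{|\rpart|}\subseteq\mathfrak{S}_n$ via a $K$-to-one projection, show the color constraint costs a uniform factor $r^{-s}$ fiberwise using the translation action of the zero-sum-per-cycle subgroup $H\le\mathbb{Z}_r^n$ and the surjectivity of its restriction to the $s$ constrained coordinates, and then quote the $\mathfrak{S}_n$ result as a black box. Both arguments hinge on the identical combinatorial fact — every cycle is long enough to retain a color slot not pinned by $\kappa$ — so your identification of the ``crux'' is accurate; your version buys reuse of the uncolored theorem at the cost of the fibration bookkeeping, while the paper's is self-contained and also yields the independence of ``$\omega$ satisfies $K$'' and ``$\tau$ satisfies $\kappa$'' as a corollary. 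Two cosmetic cautions: the vertices whose colors $\kappa$ prescribes are the targets $j_h$ (in-degree $1$ in $G(K,\kappa)$), not the path sources, though your argument only uses that they are $s$ distinct vertices; and ``pairwise compatible'' should really be ``jointly satisfiable,'' which for partial injections on $[n]$ of total size $\le n$ does follow, so no harm is done.
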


Many statistics have natural decompositions that give bounds on their degrees. 
In the case of $\mathfrak{S}_n$, we refer the reader to \cite{GRWCPermutationStatistics} for numerous examples. In the case of the hyperoctahedral group $B_n\cong \mathfrak{S}_{n,2}$, the descent and inversion statistics have degree at most $2$, and we show in Theorem~\ref{thm:size2} that equality actually holds. However, we will also see in Theorems \ref{thm:mean-des} and \ref{thm:inversions} that their first moments coincide on all conjugacy classes with no cycles of length $1$, so the converse of Theorem~\ref{thm:MainIndependence} does not hold in general.

We also define an operation called \emph{order-invariant extension} on $\mathfrak{S}_{n,r}$ that generalizes one in \cite{GRWCPermutationStatistics} for $\mathfrak{S}_n$. This procedure starts with a sufficiently ``nice" statistic on some fixed $\mathfrak{S}_{n_0,r}$ and extends to a sequence of statistics $(X_n)_{n\geq 1}$ defined on $(\mathfrak{S}_{n,r})_{n\geq 1}$. For any $(X_n)_{n\geq 1}$ that can be constructed in this manner, we show that the $k$th moments on conjugacy classes without short cycles is given by a single polynomial in $n$.

\begin{theorem}[cf. {\cite[Theorem~1.4]{hamaker2022characters}} and {\cite[Theorem~7.26]{GRWCPermutationStatistics}}] \label{thm:Polynomiality}
Fix $r\ge 1$, and let $(X_{n})_{n\geq 1}$ be the $r$-colored order-invariant extension of an order-invariant set of partial colored permutations $\mathcal{C}$. Suppose $\mathcal{C}$ consists of partial colored permutations of size $m$. Then for any $k\geq 1$, there exists a polynomial $p(n)$ of degree at most $mk$ with the following property: for any $n\geq 1$ and conjugacy class $C_{\lambda_n}$ of $\mathfrak{S}_{n,r}$ with no cycles of length $1,2,\ldots,mk$, we have $p(n) = \E_{\rpart_{n}}[X_{n}^k]$.
\end{theorem}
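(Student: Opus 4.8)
The plan is to prove polynomiality by counting, for each partial colored permutation $\sigma$ in the $k$-fold "product" structure, the number of colored permutations in $C_{\rpart_n}$ that extend $\sigma$, and showing this count is polynomial in $n$ on the regime where $\rpart_n$ has no short cycles. Concretely, $X_n$ is a $\{0,1\}$-valued (or integer-combination) sum of indicator functions $\mathbf{1}_\sigma$ over partial colored permutations $\sigma$ arising from the order-invariant set $\mathcal{C}$, so $X_n^k$ expands as a sum over $k$-tuples $(\sigma_1,\dots,\sigma_k)$; by order-invariance, which $k$-tuples contribute and with what multiplicity depends only on the combined "pattern" (the relative order of supports, the overlap structure, and the colors), not on $n$. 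Thus $\E_{\rpart_n}[X_n^k] = \frac{1}{|C_{\rpart_n}|}\sum_{\tau} c_\tau \cdot N(\tau, \rpart_n)$, where $\tau$ ranges over a finite set of combined patterns (independent of $n$), $c_\tau$ is a fixed coefficient, and $N(\tau,\rpart_n)$ counts colored permutations in $C_{\rpart_n}$ consistent with $\tau$. The heart of the argument is to show each $N(\tau,\rpart_n)/|C_{\rpart_n}|$ agrees with a fixed polynomial in $n$ of degree at most (size of $\tau$'s support) on the no-short-cycles regime.

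The key steps, in order, would be: (1) Use Theorem~\ref{thm:MainIndependence} to reduce to showing polynomiality — once we know $\E_{\rpart_n}[X_n^k]$ depends only on $n$ (not on the rest of $\rpart_n$) whenever there are no cycles of length $\le mk$, it suffices to exhibit a polynomial matching these common values. (2) Expand $X_n^k$ into the indicator-sum described above and pass to counting extensions of combined patterns $\tau$ of support size $s \le mk$. (3) For a fixed pattern $\tau$, analyze the number of ways a colored permutation of cycle type $\rpart_n$ can realize $\tau$: placing the $s$ constrained points imposes a fixed partial functional-digraph-with-colors structure, and the remaining positions must be filled by a colored permutation of the "leftover" cycle type. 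Since $\rpart_n$ has no short cycles, every cycle is long enough that the constrained structure only ever sits inside cycles of length $> mk \ge s$, so the count factors as (a fixed combinatorial constant depending on $\tau$) times (number of ways to choose which actual cycles/positions of $\rpart_n$ host the pattern) times (number of colored permutations of the residual type). (4) Show that ratio $N(\tau,\rpart_n)/|C_{\rpart_n}|$ is, on this regime, a polynomial in $n$ of degree at most $s$: this uses the standard formula $|C_{\rpart_n}| = \frac{(r n)!}{\prod \text{(cycle length)} \cdot r^{\#\text{cycles}} \cdot \prod_{\ell,c} (\text{mult}_{\ell,c})!}$ — or rather the colored analogue — and the observation that inserting a pattern of size $s$ changes the cycle type in a bounded way, so the ratio telescopes into a product of $O(s)$ linear-in-$n$ factors and bounded factors. (5) Sum over the finitely many $\tau$ with their fixed coefficients $c_\tau$ to conclude $\E_{\rpart_n}[X_n^k] = p(n)$ for a single polynomial $p$ of degree at most $mk$, valid for all $n$ with $\rpart_n$ lacking cycles of length $\le mk$.

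I expect the main obstacle to be step (3)–(4): carefully setting up the bijective/counting correspondence between "colored permutations in $C_{\rpart_n}$ realizing pattern $\tau$" and "(a choice of host cycles) $\times$ (a colored permutation of residual type)", and then verifying the resulting ratio is genuinely a polynomial rather than merely a ratio of polynomials. The subtlety is that the residual cycle type depends on $\tau$ and on which cycles of $\rpart_n$ the pattern lands in, so one must group contributions so that the dependence on the detailed cycle-type multiplicities cancels against $|C_{\rpart_n}|$; this is exactly where "no short cycles" is used — it guarantees the pattern cannot interact with small cycles in finitely many exceptional ways, so a single uniform formula applies. A clean way to handle this is to first establish it for a single indicator $\mathbf{1}_\sigma$ (i.e., the $k=1$, single-pattern case), proving $\E_{\rpart_n}[\mathbf{1}_\sigma]$ is polynomial of degree at most $|\sigma|$, and then note the $k$-fold product only enlarges the support to size $\le mk$ and recombines finitely many such patterns, so polynomiality of degree $\le mk$ is inherited. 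The degree bound then follows from the fact that each constrained point contributes at most one factor linear in $n$ to the extension count's leading behavior.
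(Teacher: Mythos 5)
Your overall strategy---expand $X_n^k$ into indicator functions of ``combined patterns,'' group them into order-invariant classes, and show each class contributes a polynomial in $n$---is the same as the paper's (Lemma~\ref{lem:algebra} performs the grouping into order-invariant extensions; Lemma~\ref{lemma:Polynomial} handles a single class). However, two concrete points in your execution do not go through as written. First, the degree bookkeeping: you assert that each ratio $N(\tau,\rpart_n)/|C_{\rpart_n}|$ is a polynomial of degree at most the support size $s$ of $\tau$. Since the combined pattern of $k$ constraints of size $m$ can have support size as large as $2mk$, this only yields a bound of $2mk$, not the required $mk$. The correct accounting rests on an explicit cancellation: by Lemma~\ref{lem:HigherMomentsIndependence}, each acyclic instance of $\tau$ consisting of $t$ ordered pairs is satisfied with probability exactly $1/\bigl[(n-1)\cdots(n-t)\,r^{t}\bigr]$ on classes with no cycles of length up to $t$, while order-invariance gives $\binom{n}{s}$ times a constant for the number of instances with support size $s$; the quotient is a polynomial of degree $s-t\le t\le mk$, using $s\le 2t$. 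Your closing remark that ``each constrained point contributes at most one linear factor'' gestures at this, but it leaves ambiguous whether the support size $s$ or the number of pairs $t$ governs the degree, and only the latter gives the theorem.

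Second, and more seriously, the step you yourself identify as the main obstacle---factoring $N(\tau,\rpart_n)$ as (choice of host cycles) times (count of colored permutations of the residual type) and then cancelling the dependence on the multiplicities of $\rpart_n$ against $|C_{\rpart_n}|$---is not carried out, and it is genuinely delicate: the residual cycle type depends on how the pattern's paths are distributed among cycles of varying lengths and colors, so the proposed factorization does not hold termwise and the cancellation must be organized across many terms. The paper sidesteps this entirely: Lemma~\ref{lem:HigherMomentsIndependence} is proved by induction on $(n,m)$ using conjugation bijections and a $\mathbb{Z}_r^m$-action on colors, not by enumerating residual types, and it delivers the per-instance probability in a closed form already independent of $\rpart$, so no cancellation against $|C_{\rpart_n}|$ is ever needed. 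Since you invoke Theorem~\ref{thm:MainIndependence} in step (1) anyway, the efficient repair is to use its quantitative engine (Lemma~\ref{lem:HigherMomentsIndependence}) directly in steps (3)--(4) rather than re-deriving the count from the conjugacy class size formula.
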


Many statistics can be constructed using an order-invariant extension. In the case of $\mathfrak{S}_n$, several examples appear in \cite{GRWCPermutationStatistics}. In particular, permutation pattern statistics can be constructed as order-invariant extensions \cite[Example 7.23]{GRWCPermutationStatistics}. In $B_n$, the inversion statistic $\inv$ can be constructed using an order-invariant extension, which we show in Example~\ref{ex:inv_symmetric}.

Finally, in special cases, moments of a statistic on conjugacy classes without short cycles coincide with the moments on the entire group. We show this for the descent statistic on $B_n$, and using this fact, 
we establish a result on the asymptotic distribution of the descent statistic $\des_B$ on conjugacy classes of $B_n$ without short cycles. This is an analogue of a corresponding statement involving descents on conjugacy classes of $\mathfrak{S}_n$ established by Fulman in \cite{FulmanJCTA1998}. 

To state our result, we note that in $B_n$, the cycle type of a permutation is determined by a pair of partitions $(\lambda,\mu)$, so we will use $C_{\lambda,\mu}$ to denote a conjugacy class. Additionally, recall that a sequence of random variables $\{X_n\}_{n\geq 1}$ converges in distribution to $X$ if the cumulative distribution functions of $\{X_n\}_{n\geq 1}$ converge to the cumulative distribution function of $X$ at its points of continuity.

\begin{theorem} \label{thm:CLTMain}
For every $n\geq 1$, let $C_{\lambda_n, \mu_n}$ be a conjugacy class of $B_n$, and define $X_n$ to be the descent statistic $\des_B$ on $C_{\lambda_n,\mu_n}$. Suppose that for all $i$, the number of cycles of length $i$ in $\lambda_n$ and $\mu_n$ approaches 0 as $n\to\infty$. Then for sufficiently large $n$, $X_n$ has mean $n/2$ and variance $(n+1)/12$, and as $n\to\infty$, the random variable $(X_n-n/2)/\sqrt{(n+1)/12}$ converges in distribution to a standard normal distribution.
\end{theorem}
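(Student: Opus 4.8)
The plan is to prove the result via the method of moments: I will show that for every fixed $k$, the $k$th moment of the normalized statistic $(X_n - n/2)/\sqrt{(n+1)/12}$ converges to the $k$th moment of the standard normal, and then invoke the classical fact that the normal distribution is determined by its moments. The key to making this tractable is the two-step reduction: first, relate the moments of $\des_B$ on a conjugacy class $C_{\lambda_n,\mu_n}$ without short cycles to the moments of $\des_B$ on the whole group $B_n$; second, analyze the latter, which is a classical object.

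First I would establish that $\des_B$ has degree at most $2$ as a colored permutation statistic (this is asserted in the paragraph after Theorem \ref{thm:MainIndependence}, and I would use Theorem \ref{thm:size2} or the direct decomposition). Then Theorem \ref{thm:MainIndependence} gives that for each fixed $k$, the moment $\E_{\lambda_n,\mu_n}[\des_B^k]$ coincides on all conjugacy classes of $B_n$ with no cycles of length $1, 2, \ldots, 2k$. The hypothesis of the theorem — that for each fixed $i$ the number of length-$i$ cycles in $\lambda_n$ and $\mu_n$ tends to $0$, hence is eventually $0$ — ensures that for $n$ large enough (depending on $k$), the class $C_{\lambda_n,\mu_n}$ has no cycles of length $\le 2k$. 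The remaining point, which I expect needs a small separate argument, is that on such classes the moment equals the moment on the \emph{entire} group $B_n$: I would argue this the way Fulman does in \cite{FULMAN1999390}, by checking that the ``generic'' class value computed from the degree-$\le 2$ expansion agrees with the average over $B_n$ of the same linear combination of partial-colored-permutation indicators (the expectation of each indicator over $B_n$ being an explicit ratio that matches the long-cycle limit). This reduces the problem to: $\des_B$ on the uniform random element of $B_n$ satisfies a CLT with mean $n/2$ and variance $(n+1)/12$.

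For the whole-group statement, the distribution of $\des_B$ on $B_n$ is classical: its generating function $\sum_{w \in B_n} t^{\des_B(w)}$ is the type-$B$ Eulerian polynomial, and $\des_B$ is equidistributed with a sum related to $n$ independent-ish coordinates; concretely one can use Carlitz's $q$-Eulerian framework or the standard fact that the type-$B$ descent polynomial has only real, nonpositive roots, so $\des_B$ is distributed as a sum of $n$ independent Bernoulli random variables (with success probabilities the transformed roots). Real-rootedness plus the Lindeberg/Lyapunov CLT for sums of independent bounded variables then yields asymptotic normality once we check the mean is $n/2$ and the variance is $(n+1)/12$ — both of which follow from differentiating the Eulerian polynomial at $t=1$ (or are recorded in the references to Reiner / Brenti in the introduction). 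Alternatively, and perhaps more cleanly, I would cite Fulman's descent CLT machinery \cite{FulmanJCTA1998} directly in type $B$.

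The main obstacle I anticipate is the bridging step: carefully justifying that the stabilized moment value on short-cycle-free classes is exactly the whole-group moment (not merely \emph{some} constant independent of the class). In type $A$ this is Fulman's observation and it transfers, but it requires identifying the $B_n$-average of each partial colored permutation indicator $\mathbf{1}_\sigma$ with the limiting value of that indicator's average over $C_{\lambda_n,\mu_n}$ as cycle lengths grow — essentially a statement that long cycles ``look uniform'' locally. I would handle this by computing both quantities explicitly: the whole-group average of $\mathbf{1}_\sigma$ for a partial colored permutation $\sigma$ of size $m$ is a fixed rational function of $n$ and $r=2$, and a direct inclusion–exclusion / cycle-counting argument (of the type already developed in \cite{GRWCPermutationStatistics}) shows the conjugacy-class average converges to the same value, with the two agreeing exactly once all cycles exceed the relevant length threshold. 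Once this is in place, the method of moments finishes the proof, and the mean and variance assertions are read off from the $k=1,2$ cases (equivalently, from the Eulerian polynomial), giving the stated normalization $(X_n - n/2)/\sqrt{(n+1)/12} \Rightarrow \mathcal{N}(0,1)$.
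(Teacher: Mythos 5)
Your overall architecture (method of moments, reduce class moments of $\des_B$ to whole-group moments, then invoke a whole-group CLT) is exactly the paper's, and your treatment of the whole-group step (real-rootedness of the type-$B$ Eulerian polynomial, or citing Chow--Mansour as the paper does in Proposition~\ref{prop:norm-allBn}) is fine. The genuine gap is in the step you yourself flag as the main obstacle, and your proposed fix rests on a false premise. You claim the $B_n$-average of each partial colored permutation indicator agrees \emph{exactly} with its average over a class whose cycles all exceed the relevant threshold. It does not: for an acyclic $(K,\kappa)$ of size $m$, Lemma~\ref{lem:HigherMomentsIndependence} gives the class value $\frac{1}{(n-1)\cdots(n-m)}\cdot\frac{1}{2^m}$, whereas the uniform average over $B_n$ is $\frac{1}{n(n-1)\cdots(n-m+1)}\cdot\frac{1}{2^m}$; and for non-acyclic $(K,\kappa)$ the class value is $0$ while the $B_n$-average is strictly positive. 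Already for $m=1$ one has $\pr_{B_n}[\omega(i)=j]=\frac{1}{2n}$ but $\pr_{\lambda,\mu}[\omega(i)=j]=\frac{1}{2(n-1)}$ on fixed-point-free classes. So the identity $\E_{\lambda_n,\mu_n}[\des_B^k]=\E_{B_n}[\des_B^k]$ is not a term-by-term matching of indicator averages; it is a cancellation special to $\des_B$ and its powers, and the analogous statement is simply false for a general degree-$\le m$ statistic (the indicator of a fixed point has whole-group mean $\frac{1}{2n}\neq 0$ but vanishes on fixed-point-free classes). This is precisely why Theorem~\ref{thm:MainIndependence} only asserts that the moments \emph{coincide across} long-cycle classes, not that they equal the whole-group moments.

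The paper closes this gap by a different device: Reiner's cycle-type generating function for descents (Theorem~\ref{thm:all}), from which Proposition~\ref{prop:all} extracts $B_{\lambda,\mu}(t)$, and Theorem~\ref{thm:long_cycle} shows that when $C_{\lambda,\mu}$ has no cycles of length $1,\ldots,2\ell$ one has $B_{\lambda,\mu}(t)/|C_{\lambda,\mu}|=B_n(t)/(2^nn!)+(1-t)^{\ell+1}g(t)$, whence Corollary~\ref{cor:Bn} yields equality of the first $\ell$ moments; the rest of your argument then goes through verbatim. If you want to keep a purely combinatorial route, you would have to prove that moment identity directly by comparing the weighted sum of the shifted class averages over acyclic indicators with the full sum of unshifted averages over all indicators and exhibiting the cancellation for $\des_B^k$ specifically --- a nontrivial computation that your proposal does not supply.
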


To prove Theorem~\ref{thm:CLTMain}, we leverage a generating function of Reiner \cite[Theorem 4.1]{ReinerEuropJC1993-2} for the joint distribution of descent and major index by cycle type (an analogue of the corresponding generating function for the symmetric group \cite{GesselReutenauer}), in tandem with a specialization of this result as observed in \cite[Theorem 5.3]{FulmanKimLeePetersen2021}. The arguments then follow Fulman \cite[Theorem 1 and proof of Theorem 2]{FulmanJCTA1998}, but the technical details are nontrivial and require care to execute.

\noindent \\ \textbf{Related Work.}
In the far-reaching paper \cite{hamaker2022characters} of Hamaker and Rhoades, the authors introduced \emph{$k$-local} statistics and studied their moments on conjugacy classes using expansion into the irreducible characters of $\mathfrak{S}_n$. We note here that their notion of \emph{$k$-local} coincides with the notion of \emph{degree at most $k$} in our present work. Hamaker and Rhoades also introduced a rich subclass of local permutation statistics called \textit{regular} statistics, and they showed that their means admit an expansion as a sum of rational functions in $n$ and polynomials in the number of short cycles in $\lambda$ and $n$. When all the cycles of $\lambda$ are sufficiently large, this expansion is in fact a polynomial in $n$ and the number of short cycles in the conjugacy class. In $\mathfrak{S}_n$, one can show that the order-invariant extensions in Theorem~\ref{thm:Polynomiality} are a subclass of these regular permutation statistics, so these results agree on conjugacy classes of $\mathfrak{S}_n$ without short cycles.

Prior to the work of Hamaker and Rhoades, Hultman \cite{Hultman} and Gill \cite{GillThesis} utilized the irreducible characters of $\mathfrak{S}_n$ to derive explicit formulas for the mean of several commonly-studied statistics on conjugacy classes of $\mathfrak{S}_n$. Subsequently, Gaetz and Ryba \cite{GaetzRyba} considered moments of permutation pattern statistics on conjugacy classes using the irreducible characters of $\mathfrak{S}_n$. They showed that for sufficiently large $n$, the $k$th moment of a permutation pattern statistic on conjugacy classes is a single polynomial in $n$ and the number of short cycles. Since permutation pattern statistics in $\mathfrak{S}_n$ can be constructed as order-invariant extensions, their result and our Theorem~\ref{thm:Polynomiality} agree on conjugacy classes of $\mathfrak{S}_n$ without short cycles.

Asymptotic results for several statistics on conjugacy classes of $\mathfrak{S}_n$ are known. Fulman \cite{FulmanJCTA1998} established asymptotic normality  on conjugacy classes without short cycles for descents and  major index. For descents, Kim and Lee \cite{KimLee2020} extended Fulman's result by showing asymptotic normality on arbitrary conjugacy classes of $\mathfrak{S}_n$, where the limit distribution depends only on the limiting proportion of fixed points. Our Theorem~\ref{thm:CLTMain} is an analogue of Fulman's original result for $B_n$, and we suspect an analogue of Kim and Lee's result holds on arbitrary conjugacy classes of $B_n$. Fulman, Kim, and Lee \cite{fulmankimlee} showed asymptotic normality for peaks in conjugacy classes of $\mathfrak{S}_n$, where the limit distribution again depends only on the limiting proportion of fixed points. Recent work of F\'{e}ray and Kammoun \cite{FerayKammoun} also established asymptotic normality of vincular patterns on conjugacy classes with appropriate conditions on the limiting proportion of fixed points and 2-cycles. 

\noindent \\ \textbf{Outline of Paper.} We start in Section~\ref{section:preliminaries} with preliminary information on the colored permutation and hyperoctahedral groups. In Section~\ref{section:colored}, we establish Theorems~\ref{thm:MainIndependence} and \ref{thm:Polynomiality}. We then give explicit formulas for the first moments of the descent and inversions statistics on conjugacy classes of the hyperoctahedral groups in Section~\ref{Bnstatistics}, and using these, we establish that the converse of Theorem~\ref{thm:MainIndependence} does not hold in general. In Section~\ref{section:CLT}, we further study the descent statistic on conjugacy classes of the hyperoctahedral group to establish Theorem \ref{thm:CLTMain}. We conclude in Section~\ref{section:conclusion} with open problems.

\section{Preliminaries}\label{section:preliminaries}

Throughout, we assume familiarity with the symmetric group $\mathfrak{S}_{n}$ as described in \cite{DummitFoote}. In particular, recall that the conjugacy classes of $\mathfrak{S}_n$ are indexed by integer partitions $\lambda$ of $n$. We can write $\lambda$ in two ways: as a decreasing tuple of positive integers $(\lambda_1\ge \lambda_2\ge\cdots)$ which sum to $n$, or, in multiplicative notation, as 
$\lambda=(1^{m_1(\lambda)}, 2^{m_2(\lambda)},\ldots),$ where $m_i(\lambda)$ denotes the number of parts of $\lambda$ that are equal to $i$. The corresponding conjugacy class is denoted $C_{\lambda}$. 

\subsection{Colored permutation groups}
We now give preliminary information about colored permutation groups. 

\begin{definition}
 The \emph{colored permutation group} $\mathfrak{S}_{n,r}$ is the wreath product 
    $\mathbb{Z}_r\wr \mathfrak{S}_n$,    
    where $\mathfrak{S}_n$ is the symmetric group on $n$ elements and $\mathbb{Z}_r$ is the cyclic group on $r$ elements. A \emph{colored permutation} is an element in $\mathfrak{S}_{n,r}$, and it can be expressed as $(\omega,\tau)$ where $\omega \in \mathfrak{S}_n$ and $\tau:[n]\rightarrow \mathbb{Z}_r$ is a function called a \emph{coloring}. The value $\tau(j)$ is called the \emph{color} of $j$.
\end{definition}

    The colored permutation group $\mathfrak{S}_{n,r}$ has a  canonical embedding as a subgroup of the symmetric group $\mathfrak{S}_{rn}$, which we describe explicitly as follows. 
    Let $[n]^r$ denote the set of $rn$ elements \[\{i^c \mid i\in [n], c\in \mathbb{Z}_r\},\] 
    where the exponent indicates the color of an element in $[n]$. We can view the colored permutation $(\omega, \tau)$ as a bijection $f:[n]^r \rightarrow [n]^r$ defined by  $f(i^c) = \omega(i)^{\tau(\omega(i))+c}$ for all $i\in [n]$ and $c\in \mathbb{Z}_r$.  An example is shown below.
 
\begin{ex}\label{ex:coloredpermutation}
Let $\omega\in \Sym_5$ be the permutation specified by \[\omega(1)=4,\, \omega(2)=5, \,\omega(3)=1,\, \omega(4)=3, \,\omega(5)=2.\] In one-line notation we write $\omega=[45132]$, and in cycle notation 
    $\omega=(143)(25)$. Now define $\tau:[5]\to \mathbb{Z}_4$ by
    \[\tau(1)=3,\, \tau(2) = 0,\, \tau(3)=1, \, \tau(4)=1,\, \tau(5)=3.\]
    Combined, this defines an element $(\omega,\tau)\in \mathfrak{S}_{5,4}$. Interpreting $(\omega,\tau)$ as a function 
    $f:[5]^4 \rightarrow [5]^4$ satisfying $f(i^0) = \omega(i)^{\tau(\omega(i))}$, the images of each $f(i^0)$ are as follows:
    $$
    f(1^0) = 4^1, f(2^0) = 5^3, f(3^0) = 1^3, f(4^0) = 3^1, f(5^0) = 2^0.
    $$
    Observe that this is enough to determine $f(i^c)$ for all $i^c\in [n]^r$. For example, $f(1^0) = 4^1$ also gives us that $f(1^1) = 4^2, f(1^2) = 4^3$, and $f(1^3) = 4^4 = 4^0$. Similar to conventions in $\mathfrak{S}_n$, one can express this colored permutation in two-line, one-line, and cycle notations as shown below:
    \begin{align*}
        (\omega,\tau)=\left(\begin{matrix}
1^0 & 2^0 & 3^0 & 4^0 & 5^0 \\
4^1 & 5^3 & 1^3 & 3^1 & 2^0 
\end{matrix}\right)=[4^1 5^3 1^3 3^1 2^0]=(1^3 4^1 3^1)(2^0 5^3).
    \end{align*}
    Note that the only values that appear in the one-line and cycles notations are the images of the elements colored $0$, i.e., the values $f(i^0)$ for $i\in [n]$. 
\end{ex}

We next describe the conjugacy classes of $\mathfrak{S}_{n,r}$. Similar to permutations in $\mathfrak{S}_n$, colored permutations have a notion of cycle type derived from the cycle notation.

\begin{definition}\label{def:cycles}
    For a cycle in a colored permutation $\mathfrak{S}_{n,r}$, its \emph{length} is the number of elements in it, and its \emph{color} is the sum of the colors that appear (as an element in $\mathbb{Z}_r$). The \emph{cycle type} of $(\omega,\tau)\in \mathfrak{S}_{n,r}$ is the multi-set of cycle lengths with colors obtained from its cycles. 
\end{definition}

We will use the following generalization of partitions to record cycle type. 

\begin{definition}
    An \emph{$r$-partition} of $n\in \mathbb{N}$ is an $r$-tuple of partitions $\rpart=(\lambda^c)_{c=0}^{r-1}$ where each $\lambda^c$ is a partition of some positive integer $n_c$ such that $\sum_{c=0}^{r-1} n_c=n$. When $r=2$, we also call this a \emph{bi-partition}. For any such $r$-partition $\rpart$, define $C_{\rpart}$ to denote the elements in $\mathfrak{S}_{n,r}$ with cycle type $\rpart$, i.e., those with $m_i(\lambda^c)$ cycles of length $i$ and color $c$ for all $i\in [n]$ and $c\in \mathbb{Z}_r$.
\end{definition}

\begin{ex}\label{ex:coloredcycletype}
    Consider the colored permutation in $\mathfrak{S}_{5,4}$ from Example~\ref{ex:coloredpermutation} expressed in cycle notation as $(1^34^13^1)(2^05^3)$. The first cycle has length $3$ and color $1$, and the second cycle has length $2$ and color $3$. The cycle type of this permutation is
    \[\rpart=(\lambda^{0},\lambda^{1},\lambda^{2},\lambda^{3})=(\emptyset ,(3),\emptyset ,(2)).\]
\end{ex}

The conjugacy classes of $\mathfrak{S}_{n,r}$ are well understood in terms of cycle type.

\begin{proposition}\cite[Theorem~4.2.8, Lemmas~4.2.9-4.2.10]{JamesKerber1981} \label{cycletype}
    Two colored permutations in $\mathfrak{S}_{n,r}$ are conjugate if and only if they share the same cycle type. Hence, the conjugacy classes of $\mathfrak{S}_{n,r}$ are given by $C_{\rpart}$, where $\rpart$ is an $r$-partition of $n$.  
\end{proposition}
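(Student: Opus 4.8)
The plan is to work with the semidirect product description $\mathfrak{S}_{n,r}\cong \mathbb{Z}_r^{\,n}\rtimes \mathfrak{S}_n$: write a colored permutation as a pair $(\omega,\tau)$ with $\omega\in\mathfrak{S}_n$ and $\tau:[n]\to\mathbb{Z}_r$, freely identify $\tau$ with the vector $(\tau_1,\dots,\tau_n)\in\mathbb{Z}_r^{\,n}$ (writing $\tau_j$ for $\tau(j)$), and use the group law $(\omega_1,\tau_1)(\omega_2,\tau_2)=(\omega_1\omega_2,\ \tau_1+\omega_1\!\cdot\!\tau_2)$ where $(\omega\!\cdot\!\tau)_j=\tau_{\omega^{-1}(j)}$; this is exactly the composition rule coming from the action on $[n]^r$ used to define $f$ above. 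In this language the color of a cycle $(i_1\,i_2\,\cdots\,i_k)$ of $\omega$ is the sum $\tau_{i_1}+\tau_{i_2}+\cdots+\tau_{i_k}\in\mathbb{Z}_r$, and the two implications are proved separately.

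For the forward direction (same conjugacy class $\Rightarrow$ same cycle type), I would compute the conjugate $(\sigma,\upsilon)(\omega,\tau)(\sigma,\upsilon)^{-1}$ directly. Its $\mathfrak{S}_n$-component is $\sigma\omega\sigma^{-1}$, so the multiset of uncolored cycle lengths is unchanged and a cycle $(i_1\,\cdots\,i_k)$ of $\omega$ is carried to the cycle $(\sigma(i_1)\,\cdots\,\sigma(i_k))$ of $\sigma\omega\sigma^{-1}$. A direct computation gives the $\mathbb{Z}_r^{\,n}$-component as a vector $\tau'$ with $\tau'_{\sigma(i_j)}=\upsilon_{\sigma(i_j)}+\tau_{i_j}-\upsilon_{\sigma(i_{j-1})}$ (up to the convention chosen for the group law). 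Summing this cyclically over a single cycle, the $\upsilon$-contributions telescope and cancel, leaving $\sum_j \tau'_{\sigma(i_j)}=\sum_j\tau_{i_j}$. Hence every cycle keeps both its length and its color, i.e.\ conjugation preserves cycle type.

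For the reverse direction, assume $(\omega,\tau)$ and $(\pi,\rho)$ have the same cycle type. Since the multisets of (length, color) pairs agree, I can pick $\sigma\in\mathfrak{S}_n$ with $\sigma\omega\sigma^{-1}=\pi$ chosen so that each cycle of $\omega$ is sent to a cycle of $\pi$ of the same length whose $\rho$-color equals the $\tau$-color of the original cycle; by the computation above, conjugating $(\omega,\tau)$ by $(\sigma,0)$ produces an element $(\pi,\tau')$ with permutation part exactly $\pi$ and with cycle colors (relative to $\tau'$) matching those of $\rho$ cycle by cycle. It then suffices to conjugate $(\pi,\tau')$ to $(\pi,\rho)$ by a base-group element $(1,\upsilon)$; a short computation shows this amounts to solving $\upsilon_j-\upsilon_{\pi^{-1}(j)}=\rho_j-\tau'_j$ for all $j$. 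Restricted to a cycle $(j_1\,\cdots\,j_k)$ of $\pi$ this is a telescoping system in the unknowns $\upsilon_{j_1},\dots,\upsilon_{j_k}$, solvable (by fixing one value and propagating around the cycle) precisely when the cyclic sum $\sum_\ell(\rho_{j_\ell}-\tau'_{j_\ell})$ vanishes in $\mathbb{Z}_r$ — which is exactly the equality of colors on that cycle. Solving on each cycle and composing the conjugators shows $(\omega,\tau)$ and $(\pi,\rho)$ are conjugate, completing the equivalence.

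The main (indeed essentially the only nontrivial) point is the ``discrete Poincar\'e lemma'' in the reverse direction: recognizing that $\upsilon_j-\upsilon_{\pi^{-1}(j)}=\rho_j-\tau'_j$ is solvable iff the per-cycle color sums agree, which upgrades cycle type from a conjugacy invariant to a \emph{complete} invariant. The forward direction is a bookkeeping computation with a telescoping sum, and the reduction of the reverse direction to the base group is routine once the cycle-by-cycle matching of colors is set up carefully. (Alternatively, one may simply invoke the standard description of conjugacy in wreath products, as the authors do via \cite{JamesKerber1981}.)
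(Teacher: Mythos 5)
Your proof is correct. Note, however, that the paper does not prove this proposition at all: it is quoted directly from James and Kerber \cite{JamesKerber1981}, which is the standard reference for conjugacy in wreath products. What you have written is a complete, self-contained argument, and all the key computations check out against the paper's conventions: with $f(i^c)=\omega(i)^{\tau(\omega(i))+c}$ the composite coloring is indeed $\tau_1+\omega_1\cdot\tau_2$, the conjugate's coloring satisfies $\tau'_{\sigma(i_j)}=\upsilon_{\sigma(i_j)}+\tau_{i_j}-\upsilon_{\sigma(i_{j-1})}$ so the cyclic sum telescopes, and the solvability criterion $\sum_\ell(\rho_{j_\ell}-\tau'_{j_\ell})=0$ for the system $\upsilon_j-\upsilon_{\pi^{-1}(j)}=\rho_j-\tau'_j$ is exactly the per-cycle color match. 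The one point worth stating explicitly if you write this up is that in the reverse direction the permutation $\sigma$ must be chosen to match cycles by the \emph{pair} (length, color), not merely by length — you do say this, but it is the step where the hypothesis of equal cycle type (as opposed to equal underlying partition of $n$) is actually used. Compared with simply citing \cite{JamesKerber1981}, your argument buys transparency and makes visible why the per-cycle color sum is the complete invariant (the ``discrete Poincar\'e lemma'' you identify), at the cost of a page of bookkeeping that the citation avoids.
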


Throughout this paper, we will use $\pr_{\mathfrak{S}_{n,r}}$ and $\pr_{\rpart}$ to denote probabilities in $\mathfrak{S}_{n,r}$ and $C_{\rpart}$ (with respect to the uniform distribution on these sets). We similarly use $\E_{\mathfrak{S}_{n,r}}$ and $\E_{\rpart}$ for the expected values on the corresponding probability spaces.

\subsection{Hyperoctahedral groups}\label{sec:Bn_preliminaries}

We now give an overview of the hyperoctahedral group $B_n\cong \mathfrak{S}_{n,2}$, including statistics that will be used as illustrative examples in the next section. The conventions introduced in this section and used throughout this paper are intended to align with the literature on $B_n$.

Let $[\pm n]$ denote the set $\{\pm 1,\pm 2,\ldots,\pm n\}$. The hyperoctahedral group $B_n$ is the group of permutations $\omega$ on $[\pm n]$ satisfying $\omega(i)=-\omega(-i)$. A comparison with the definitions in the preceding sections shows that this is isomorphic to the $2$-colored permutation group $\mathfrak{S}_{n,2}$, where the set $\{1,2,\ldots,n\}$ corresponds to elements colored $0$ and the set $\{-1,-2,\ldots,-n\}$ corresponds to elements colored $1$.

Similar to general colored permutation groups, it suffices to specify the images of the set $[n]$ to determine an element in $B_n$. Using these values, elements in $B_n$ can again be represented using the two-line, one-line, and cycle notations. In the cycle notation, a cycle is \emph{even} (resp. \emph{odd}) if there is an even (resp. odd) number of negatives in the cycle. These respectively correspond to cycles with color $0$ and $1$ in Definition \ref{def:cycles}. We give examples below.

\begin{ex}\label{ex:basic1}
Consider the following element of $B_8$, written in two-line and one-line notation. 
\begin{equation*}
    \begin{split}
        \omega & =
\left(\begin{matrix}
1 & 2 & 3 & 4 & 5 & 6 & 7 & 8 \\
+2 & +7 & -1 & -5 & +8 & +3 & +6 &-4 
\end{matrix}\right) 
 =[2,7,-1,-5, 8,3,6,-4].
    \end{split}
\end{equation*}
Its cycle notation is given by $(-5,8,-4)(2, 7, 6, 3,-1),$ where we include commas for clarity. The two cycles are respectively an even cycle of length 3 and an odd cycle of length 5. Thus, the cycle type of $\omega$ is $(\lambda,\mu)=((3),(5))$. We have $m_3(\lambda)=m_5(\mu)=1$, and all other $m_i(\lambda)$ and $m_i(\mu)$ are $0$.
\end{ex}

\begin{ex}\label{ex:basic2} 
Now consider 
\[\omega=
\left(\begin{matrix}
1 & 2 & 3 & 4 & 5 & 6 & 7 & 8 \\
+2 & +7 & 3 & -4 & +8 & -1 & -6 &-5
\end{matrix}\right)
=[2,7,3,-4, 8,-1,-6,-5].\]
Its even cycles are 
$(2,7,-6,-1)$ and $(3) $, respectively of lengths 4 and 1, and its odd cycles are $(-4)$ and $(8, -5)$, respectively of lengths 1 and 2.
Hence the cycle type is 
$(\lambda,\mu)=((4,1), (2,1))$. We have ${m_1(\lambda)}=m_4(\lambda)=1$ and ${m_1(\mu)}={m_2(\mu)}=1.$ 
\end{ex}

From Proposition \ref{cycletype}, the conjugacy classes in $B_n$ are uniquely determined by a 
bi-partition $(\lambda,\mu)$ of $n$, where $\lambda$  corresponds to the even cycles and $\mu$ to the odd cycles. Throughout this paper, $C_{\lambda,\mu}$ denotes the conjugacy class of $B_n$ indexed by the ordered pair $(\lambda, \mu)$. Additionally, $\pr_{B_n}$ and $\pr_{\lambda,\mu}$ denote probabilities in $B_n$ and $C_{\lambda,\mu}$ (with respect to the uniform distribution on these sets), and $\E_{B_n}$ and $\E_{\lambda,\mu}$ denote expectations on these sets.

The size of any conjugacy class in $B_n$ is well 
 known. For an integer partition $\lambda\vdash k$, let $\ell(\lambda)$ denote the number of parts of $\lambda$, so that 
$\ell(\lambda)=\sum_i  m_i(\lambda)$ and $k=\sum_i  i \cdot m_i(\lambda)$.  Let $z_\lambda$ denote the order of the centralizer of an element of cycle type $\lambda$ in the symmetric group $\mathfrak{S}_k$, i.e., $z_\lambda=\prod_i i^{m_i(\lambda)} m_i(\lambda)!$. The size of the conjugacy class $C_{\lambda}$ in $\mathfrak{S}_n$ is given by $|\mathfrak{S}_n|/z_{\lambda}$, and an analogous result holds for $B_n$. 

\begin{definition}\label{def:doublezee} For any bi-partition $(\lambda,\mu)$ of $n$, define $z_{\lambda,\mu}$ to be the product ${2^{\ell(\lambda)} z_\lambda 2^{\ell(\mu)} z_\mu}$. 
\end{definition}

\begin{lemma}\label{lem:classsize}\cite[4.1.33, Lemma 4.2.10]{JamesKerber1981}
Let $C_{\lambda,\mu}$ be the conjugacy class of $B_n$ indexed by the bi-partition $(\lambda,\mu)$ of n.
The order of the centralizer of an element of cycle type $(\lambda, \mu)$ is $z_{\lambda,\mu}$, and the size of the conjugacy class is

\[|C_{\lambda,\mu}|=\frac{n!\cdot 2^n}{2^{\ell(\lambda)} z_\lambda 2^{\ell(\mu)} z_\mu}=\frac{|B_n|}{z_{\lambda,\mu}}.\]

\end{lemma}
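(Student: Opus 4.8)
\textbf{Proof plan for Lemma~\ref{lem:classsize}.}

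The plan is to compute the size of a conjugacy class $C_{\lambda,\mu}$ of $B_n$ directly using the orbit-stabilizer theorem, namely $|C_{\lambda,\mu}| = |B_n| / |Z(\omega)|$ where $\omega$ is any fixed element of cycle type $(\lambda,\mu)$ and $Z(\omega)$ is its centralizer. Since Proposition~\ref{cycletype} already tells us that the conjugacy class is exactly the set of elements with cycle type $(\lambda,\mu)$, the entire task reduces to determining the order of the centralizer. I would organize the centralizer count as follows. First, recall that an element $g$ centralizes $\omega$ if and only if conjugation by $g$ permutes the cycles of $\omega$ among themselves, respecting both length \emph{and} color, and within the cycles acts by a rotation. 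This is the standard "cycle-relabeling plus rotation" description of centralizers, now refined by the extra color datum.

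Next I would count the contributions factor by factor. For a fixed cycle length $i$ and fixed color $c$, there are $m_i(\lambda^c)$ cycles of that length and color (writing $\lambda^0 = \lambda$, $\lambda^1 = \mu$ in the $r=2$ case); a centralizing element may permute these among themselves arbitrarily, giving $m_i(\lambda^c)!$ choices, and for each such cycle there is a choice of how to "align" it, which contributes a factor accounting for the rotational freedom of the cycle together with the color bookkeeping. In $\mathfrak{S}_n$ a cycle of length $i$ contributes a factor $i$ (rotations of the cycle), so that the symmetric-group centralizer order is $z_\lambda = \prod_i i^{m_i(\lambda)} m_i(\lambda)!$. In $B_n \cong \mathfrak{S}_{n,2}$, a cycle of length $i$ has $i$ underlying positions but each position carries a sign, so the relevant cyclic-type symmetry group of a single colored cycle has order $2i$ rather than $i$: concretely, the centralizer of a single $i$-cycle inside the copy of $B_i$ acting on its support is cyclic of order $2i$ (generated by the colored cycle itself). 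Multiplying over all cycles of all lengths in both $\lambda$ and $\mu$ gives $\prod_i (2i)^{m_i(\lambda)} m_i(\lambda)! \cdot \prod_i (2i)^{m_i(\mu)} m_i(\mu)!$. Using $\ell(\lambda) = \sum_i m_i(\lambda)$ and regrouping the powers of $2$, this equals $2^{\ell(\lambda)}\big(\prod_i i^{m_i(\lambda)} m_i(\lambda)!\big)\cdot 2^{\ell(\mu)}\big(\prod_i i^{m_i(\mu)} m_i(\mu)!\big) = 2^{\ell(\lambda)} z_\lambda\, 2^{\ell(\mu)} z_\mu = z_{\lambda,\mu}$, which is exactly Definition~\ref{def:doublezee}. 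Finally, since $|B_n| = 2^n n!$, the orbit-stabilizer theorem gives $|C_{\lambda,\mu}| = 2^n n! / z_{\lambda,\mu}$, as claimed.

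I expect the main obstacle to be justifying cleanly that the centralizer of a single colored $i$-cycle (inside the relevant $B_i$) has order exactly $2i$ rather than $i$ — i.e., that negating all the signs along a cycle commutes with it while no smaller "fractional" twist does, and that an element permuting two equal-length cycles of \emph{different} color cannot centralize $\omega$. One way to sidestep a delicate bare-hands argument is simply to cite \cite[Theorem~4.2.8, Lemmas~4.2.9--4.2.10]{JamesKerber1981} (already invoked in Proposition~\ref{cycletype}) and \cite[4.1.33, Lemma~4.2.10]{JamesKerber1981} for the centralizer order, and then only verify the final bookkeeping identity $2^{\ell(\lambda)} z_\lambda\, 2^{\ell(\mu)} z_\mu = 2^n n!/|C_{\lambda,\mu}|$ by a direct computation; indeed the statement of the lemma already attributes it to James--Kerber, so the proof can be as short as reproducing that citation and unwinding the definition of $z_{\lambda,\mu}$.
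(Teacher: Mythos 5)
The paper offers no proof of this lemma at all: it is quoted verbatim from James--Kerber with the citation \cite[4.1.33, Lemma 4.2.10]{JamesKerber1981}, so your fallback option of ``reproduce the citation and unwind the definition of $z_{\lambda,\mu}$'' is in fact exactly what the authors do. Your orbit-stabilizer sketch is the standard argument and is correct in outline: a centralizing element must permute the cycles within each (length, color) class, giving the $m_i(\lambda)!\,m_i(\mu)!$ factors, and the stabilizer of a single colored $i$-cycle contributes $2i$, so the centralizer order is $\prod_i (2i)^{m_i(\lambda)+m_i(\mu)}m_i(\lambda)!\,m_i(\mu)! = z_{\lambda,\mu}$. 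One small inaccuracy in the step you yourself flag as the delicate one: the centralizer of a single colored $i$-cycle inside the $B_i$ on its support is \emph{not} in general cyclic and generated by the cycle itself. For an odd (color $1$) cycle the cycle has order $2i$ and your description is right; but an even (color $0$) $i$-cycle has order $i$, and its centralizer is generated by the cycle together with the sign-change on its entire support, giving a group isomorphic to $\mathbb{Z}_i\times\mathbb{Z}_2$ (e.g.\ the Klein four-group for $i=2$, which is not cyclic). The order is $2i$ in both cases, so the count --- and hence the final formula $|C_{\lambda,\mu}| = 2^n n!/z_{\lambda,\mu}$ --- is unaffected, but the parenthetical justification would need this correction if you wrote the argument out in full.
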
 

We will be particularly interested in two statistics on $B_n$ related to its presentation as a Coxeter group; see \cite{BB} for a detailed account. The signed permutation group $B_n$ can be generated using the elements $s_0=[-1,2,3,\ldots,n]=(-1)$ and $s_i=[1,\ldots,i-1,i+1,i,i+2,\ldots,n]=(i, i+1)$ for $1\leq i\leq n-1$.  With respect to these generators,  the inversion (or length) statistic and the descent statistic are given by
\[\inv_B(\omega)=\min\{\ell \mid \omega=s_{i_1}\dots s_{i_\ell} \text{ for some $i_1,\ldots,i_{\ell}\in \{0\}\cup [n-1]$}\},\]
\[\des_B(\omega)=\left|\{i\in \{0\}\cup [n-1] \mid \inv_B( \omega s_i )<\inv_B(\omega)\}\right|.\]

One can compute these statistics in alternative ways. For $\des_B$, we will primarily use the formulation given in \cite[Proposition 8.1.2]{BB}:
\[\des_B(\omega)=|\{i\in \{0\}\cup [n-1]\mid \omega(i)>\omega(i+1)\}|\]
with the convention that $\omega(0)=0$. Note that the condition $\omega(i)>\omega(i+1)$ is with respect to the usual ordering on $[\pm n]\cup \{0\}$. For $\inv_B$, we first define two additional statistics
\[\inv(\omega)=|\{(i,j)\in [n]\times [n] \mid i<j \text{ and }\omega(i)>\omega(j)\}|,\]
\[\negative(\omega)= \sum_{i\in [n],\omega(i)<0} \omega(i).\]
\noindent  Now  \cite[Proposition 8.1.1]{BB} shows that $\inv_B$ can be calculated using
\begin{equation}\label{eqn:InvA-InvB-Neg}\inv_B(\omega)=\inv(\omega)-\negative(\omega).
\end{equation}

\section{Partial colored permutations and moments of statistics}\label{section:colored}

In this section, we will introduce partial colored permutations and define what it means for a colored permutation statistic to have degree at most $m$. These directly generalize the definitions in \cite{GRWCPermutationStatistics}. Using partial colored permutations, we prove Theorem~\ref{thm:MainIndependence} and Theorem~\ref{thm:Polynomiality}.

\subsection{Partial colored permutations}

We begin with a definition of partial colored permutations. Similar to how colored permutations consist of two components, a partial colored permutation will also consist of two components.

\begin{definition}\label{def:coloredconstraint}
    A \emph{partial colored permutation} on $\mathfrak{S}_{n,r}$ is a pair $(K,\kappa)$, where $K=\{(i_h,j_h)\}_{h=1}^m$ consists of $m$ distinct ordered pairs of elements in $[n]$, $\{i_h\}_{h=1}^m$ are $m$ distinct elements, $\{j_h\}_{h=1}^m$ are $m$ distinct elements, and $\kappa:\{j_1,\ldots,j_m\}\to \mathbb{Z}_r$ is any function. We will sometimes represent $\kappa$ also using ordered pairs $\{(j_h,\kappa(j_h))\}_{h=1}^m$. We call $m$ the \emph{size} of $(K,\kappa)$. For brevity, we will alternatively denote a partial colored permutation using a single set of ordered pairs \[(K,\kappa)=\left\{\left(i_h^0,j_h^{\kappa(j_h)}\right)\right\}_{h=1}^m\]
    of elements in $[n]^r$, and we sometimes omit set brackets when  $m=1$.
\end{definition}

\begin{remark}
In previous work \cite{GRWCPermutationStatistics, GRWCFPSAC}, we referred to partial (colored) permutations as \emph{(colored) permutation constraints}. The current conventions are intended to better align with the literature.
\end{remark}

We now define what it means for a colored permutation to satisfy a partial colored permutation. This allows us to view partial colored permutations as specifying some of the function values in a colored permutation.

\begin{definition}
    Let $(K,\kappa)$ be a partial colored permutation on $\mathfrak{S}_{n,r}$ of size $m$, and let $(\omega,\tau)\in \mathfrak{S}_{n,r}$ be a colored permutation. The permutation $\omega\in \mathfrak{S}_n$ \emph{satisfies} $K$ if $\omega(i_h)=j_h$ for all $h\in [m]$. The coloring $\tau:[n]\to \mathbb{Z}_r$ \emph{satisfies} $\kappa$ if $\tau(j_h)=\kappa(j_h)$ for all $h\in [m]$. The colored permutation $(\omega,\tau)$ \emph{satisfies} $(K,\kappa)$ if $\omega$ satisfies $K$ and $\tau$ satisfies $\kappa$. 
\end{definition}

For example, the colored permutation $(\omega,\tau)$ of Example~\ref{ex:coloredpermutation} satisfies the size 3 partial colored permutation $(K, \kappa)=\{(1^0, 4^1), (4^0, 3^1), (5^0, 2^1)\}$, but does not satisfy the size 2 partial colored permutation $\{(1^0, 4^2), (4^0, 3^1)\}$. More generally, a given colored permutation $(\omega,\tau)$ satisfies exactly $\binom{n}{m}$ partial colored permutations of size $m$.

Recall that a (real) colored permutation statistic is a function $X: \mathfrak{S}_{n,r} \rightarrow \mathbb{R}$. For any partial colored permutation $(K,\kappa)$, one useful statistic will be the indicator function $I_{(K,\kappa)}:\mathfrak{S}_{n,r}\to \mathbb{R}$ that takes value $1$ on colored permutations satisfying $(K,\kappa)$ and $0$ otherwise. We will be interested in decompositions of colored permutation statistics in terms of these indicator functions.

\begin{definition}\label{def:degree}
    Define $\mathcal{C}_m$ to be the set of partial colored permutations with size at most $m$. A colored permutation statistic $X:\mathfrak{S}_{n,r}\to \mathbb{R}$ has \emph{degree} $m$ if $X$ is in the (real) vector space spanned by $\{I_{(K,\kappa)}:(K,\kappa)\in \mathcal{C}_m\}$ and $X$ is not in the (real) vector space spanned by $\{I_{(K,\kappa)}:(K,\kappa)\in \mathcal{C}_{m-1}\}$. 
\end{definition}

Many colored permutation statistics have natural decompositions that allow one to bound their degree. We give some examples in $B_n$ using the ordered pair notation from Definition~\ref{def:coloredconstraint} and the usual conventions on $B_n$ of using the elements $[\pm n]$ in place of $[n]^2$. 

\begin{ex}\label{ex:des-inv-negsum-constraints}
For the statistics defined on $B_n$ given in Section~\ref{sec:Bn_preliminaries}, we have
        \[\des_B =\sum_{j\in [n]} I_{(1,-j)}+\sum_{i\in [n-1]} \,\sum_{\substack{k,\ell\in [\pm n]\\ k<\ell}} I_{\{(i,\ell),(i+1,k)\}},\]
        \[\inv=\sum_{ \substack{i,j \in [n]
     \\ i<j}} \, \sum_{\substack{k,\ell\in [\pm n] \\ k<\ell}} \, I_{\{(i,\ell),(j,k)\}},\]
        \[\negative =\sum_{i\in [n]} \, \sum_{j\in [n]} (-j)I_{(i, -j)}.\]
This shows that $\des_B$ and $\inv$ have degree at most $2$, and $\negative$ has degree $1$. Since $\inv_B$ is the difference of $\inv$ and $\negative$, we also see that $\inv_B$  has degree at most $2$. 
\end{ex}

Decompositions like the ones in Example~\ref{ex:des-inv-negsum-constraints} result in upper bounds on the degree of a statistic. However, determining the degree of a statistic can be difficult. Due to this, our results will be stated for statistics that have degree \emph{at most} $m$.

\begin{remark}
    Note that a colored permutation $(\omega,\tau)\in \mathfrak{S}_{n,r}$ is itself a partial colored permutation of size $n$. Consequently, we can express any statistic $X$ as
    \[X=\sum_{(\omega,\tau)\in \mathfrak{S}_{n,r}} X(\omega,\tau)\cdot I_{(\omega,\tau)}.\]
    This shows that every colored permutation statistic on $\mathfrak{S}_{n,r}$ has degree at most $n$.
\end{remark}

\begin{remark}
If $I_{(K,\kappa)}$ is the indicator function for a partial colored permutation $(K,\kappa)=\{(i_h^0,j_h^{\kappa(j_h)})\}_{h=1}^m$ on $\mathfrak{S}_{n,r}$ of size $m < n$, then we can select some $i\in [n]\setminus \{i_h\}_{h=1}^m$ and express
\begin{equation*}
I_{(K,\kappa)}=\sum_{j\in [n]\setminus \{j_h\}_{h=1}^m }\, \sum_{c\in \mathbb{Z}_r} I_{(K\cup \{(i,j)\},\kappa\cup \{(j,c)\})}.
\end{equation*}
    Because of this, one can modify $\mathcal{C}_m$ in Definition \ref{def:degree} to contain only the partial colored permutations of size exactly $m$ without affecting the definition of degree for a colored permutation statistic. We have chosen the current convention to simplify proofs. 
\end{remark}

\subsection{Moments on conjugacy classes without short cycles}
 In this section, we will prove Theorem~\ref{thm:MainIndependence}, adapting the general strategy of \cite[Section 7]{GRWCPermutationStatistics}. It will be convenient to consider the set of pairs $K$ in a partial colored permutation $(K,\kappa)$ as a graph, so we formalize this now. 

\begin{definition}
Let $(K, \kappa)$ be a partial colored permutation in $\mathfrak{S}_{n,r}$, with $K = \{ (i_{1}, j_{1}) ,\ldots, (i_{m}, j_{m})\}$. The \emph{graph} of $(K, \kappa)$, denoted $G(K, \kappa)$, is the directed graph with vertex set $V = [n]$ and directed edge set $K$. Note that $G(K, \kappa)$ depends only on $K$ and not on $\kappa$.
\end{definition}

\begin{definition}\label{def:acyclic}
A partial colored permutation $(K,\kappa)$ on $\mathfrak{S}_{n,r}$ is \emph{acyclic} if the graph of $G(K, \kappa)$ does not contain any cycles. Observe that in this case, $G(K, \kappa)$ consists of a set of directed paths, each of which has a source vertex, i.e., a vertex of in-degree 0. Additionally, acyclicity implies that the total number of edges in $G(K,\kappa)$ is $|K|<n$.
\end{definition}

We next show that on any conjugacy class without short cycles, the mean of $I_{(K,\kappa)}$ takes one of two values determined entirely by acyclicity of $(K,\kappa)$. Note that since $I_{(K,\kappa)}$ only takes values in $\{0,1\}$, its mean on any $C_{\rpart}$ can be expressed as 
\[\E_{\rpart}[I_{(K,\kappa)}]=\pr_{\lambda}[I_{(K,\kappa)}=1] = \pr_{\lambda}[(\omega,\tau) \text{ satisfies } (K,\kappa)].\]
The next two lemmas explicitly calculate the probability on the right side. Note that the first lemma generalizes \cite[Lemma 7.15]{GRWCPermutationStatistics} to the colored permutation setting, and our proof follows the general approach given in \cite{GRWCPermutationStatistics} but with several nontrivial technical modifications to properly account for color. 

\begin{lemma}\label{lem:omegasatisfiesK}(Compare to $\mathfrak{S}_{n}$, \cite[Lemma 7.15]{GRWCPermutationStatistics})
    Let $(K,\kappa)$ be a partial colored permutation on $\mathfrak{S}_{n,r}$ of size $m$, and let $C_{\rpart}$ be a conjugacy class of $\mathfrak{S}_{n,r}$ with no cycles of length $1,2,\ldots,m$.
    If $K$ is not acyclic, then \[\pr_{\rpart}[\omega \mathrm{\ satisfies\ } K]=0.\]
    If $K$ is acyclic, then 
    \[
\pr_{\rpart}[\omega \mathrm{\ satisfies\ } K] = \frac{1}{(n-1)(n-2) \cdots (n-m)}.
\]
\end{lemma}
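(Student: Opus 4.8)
The plan is to count colored permutations in $C_{\rpart}$ satisfying $K$ by reducing to an analogous count in the symmetric group, and then invoking (or re-deriving) the uncolored statement. First I would observe that the event ``$\omega$ satisfies $K$'' depends only on the underlying permutation $\omega \in \mathfrak{S}_n$ and not on the coloring $\tau$. So I would set up a map from $C_{\rpart} \subseteq \mathfrak{S}_{n,r}$ to the set of permutations in $\mathfrak{S}_n$ that arise as the $\omega$-component of some element of $C_{\rpart}$; I would want to show this map has fibers of constant size, so that $\pr_{\rpart}[\omega \text{ satisfies } K]$ equals the probability that a permutation drawn uniformly from the appropriate union of $\mathfrak{S}_n$-conjugacy classes satisfies $K$. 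Concretely, the $\omega$-component of an element of cycle type $\rpart = (\lambda^0, \ldots, \lambda^{r-1})$ has (uncolored) cycle type $\nu = \lambda^0 \cup \lambda^1 \cup \cdots \cup \lambda^{r-1}$ (the multiset union), and the key combinatorial fact to verify is that every element of $C_\nu \subseteq \mathfrak{S}_n$ arises the same number of times: given $\omega$ of cycle type $\nu$, the number of colorings $\tau$ making $(\omega,\tau)$ lie in $C_\rpart$ is independent of the choice of $\omega$ within $C_\nu$ (it is a product of multinomial-type factors recording how to distribute the $m_i(\lambda^c)$ requirement across the $m_i(\nu)$ cycles of length $i$, times $r^{\ell(\nu) \cdot (\text{something})}$ for the color-sum freedom within each cycle). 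Once that uniformity is established, $\pr_{\rpart}[\omega \text{ satisfies } K] = \pr_{\nu}[\omega \text{ satisfies } K]$ where $\nu$ is the merged partition.

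Next I would check the hypothesis transfers correctly: $C_{\rpart}$ has no cycles of length $1, \ldots, m$ precisely when $\nu$ has no parts of size $1, \ldots, m$. Then I would apply \cite[Lemma 7.15]{GRWCPermutationStatistics}, the uncolored analogue, which gives exactly $0$ when $K$ is not acyclic and $\frac{1}{(n-1)(n-2)\cdots(n-m)}$ when $K$ is acyclic. That completes the proof modulo the fiber-size computation. Alternatively, if I did not want to lean on the uncolored lemma as a black box, I would redo its argument directly in the colored setting: condition on the images $\omega(i_1), \ldots$ being forced along the directed paths of $G(K,\kappa)$, and use the standard fact that in a conjugacy class without short cycles, specifying the images of $m$ elements forming acyclic paths is ``generic'' — each successive image is equally likely to be any of the remaining elements, giving the falling-factorial denominator $n(n-1)\cdots(n-m+1)$ corrected by the fact that the $m$ source constraints only pin down $m$ of the $n$ values; the reason the answer is $\frac{1}{(n-1)\cdots(n-m)}$ rather than $\frac{1}{n(n-1)\cdots(n-m+1)}$ is the mild subtlety that, within a fixed conjugacy class, the first image is not uniform over all $n$ elements but over $n-1$ of them once we know no cycle is too short — I would need to track this carefully.

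The main obstacle I expect is the bookkeeping in the fiber-size argument: showing rigorously that the number of valid colorings $\tau$ is the same for every $\omega \in C_\nu$, which requires being careful about how the requirement ``$m_i(\lambda^c)$ cycles of length $i$ have color $c$'' interacts with a fixed cycle partition of $\omega$, and about the color-sum-within-a-cycle degrees of freedom (each cycle of length $\ell$ admits $r^{\ell - 1}$ colorings with any prescribed total color). This is genuinely a counting-with-symmetry argument rather than a deep one, but it is the place where ``several nontrivial technical modifications to properly account for color'' (as the authors put it) will live. A secondary obstacle, if taking the direct route, is correctly handling the distinction between the full-group probability $\frac{1}{n(n-1)\cdots(n-m+1)}$ and the conjugacy-class probability $\frac{1}{(n-1)\cdots(n-m)}$, i.e., understanding precisely why restricting to a short-cycle-free class shifts the falling factorial by one factor; this is exactly the phenomenon already present in the $\mathfrak{S}_n$ case and should carry over, but it must be stated rather than glossed.
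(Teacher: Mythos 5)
Your primary route is correct and is genuinely different from the paper's proof. The paper proves the lemma by a double induction on $n$ and $m$ entirely within the colored setting: the base case $m=1$ uses conjugation bijections to show $\pr_{\rpart}[\omega(i)=j]=\tfrac{1}{n-1}$, and the inductive step conditions on the length and color of the cycle containing a source vertex $i_m$ and contracts that cycle via an $r$-to-$1$ map onto a conjugacy class of $\mathfrak{S}_{n-1,r}$ --- this is where the authors' ``nontrivial technical modifications to account for color'' live. You instead push all of the color into a fiber count: the projection $(\omega,\tau)\mapsto\omega$ sends $C_{\rpart}$ onto the single $\mathfrak{S}_n$-class $C_\nu$ with $\nu=\bigcup_c\lambda^c$, and the fiber over each $\omega\in C_\nu$ has the constant size $\prod_i\binom{m_i(\nu)}{m_i(\lambda^0),\ldots,m_i(\lambda^{r-1})}\cdot r^{\,n-\ell(\nu)}$ (choose which cycles of each length carry which total color, then $r^{\ell-1}$ colorings per cycle of length $\ell$ with prescribed total). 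Since the event ``$\omega$ satisfies $K$'' is color-blind, the pushforward of the uniform measure is uniform on $C_\nu$, the no-short-cycles hypothesis transfers verbatim, and the uncolored \cite[Lemma 7.15]{GRWCPermutationStatistics} finishes. Your sketch of the fiber computation is the right one and it does close; the one point to state explicitly is that surjectivity onto $C_\nu$ and constancy of the fiber follow because the count depends only on the multiplicities $m_i(\lambda^c)$ and not on which elements of $[n]$ populate the cycles. What each approach buys: yours is shorter, modular, and makes transparent why $\kappa$ plays no role in this lemma; the paper's is self-contained and its conditioning-and-contraction machinery foreshadows the orbit argument used for the coloring part in the subsequent Lemma on $\pr_{\rpart}[\tau\text{ satisfies }\kappa]$, which your reduction does not by itself supply. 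Your secondary (direct) route is essentially the paper's argument, and you correctly isolate its one subtlety, namely why the falling factorial starts at $n-1$ rather than $n$ inside a fixed conjugacy class.
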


We emphasize that in this lemma, $\kappa$ is not relevant, so we do not use it. Instead, $\kappa$ will be considered in Lemma~\ref{lem:HigherMomentsIndependence}.

\begin{proof}[Proof of Lemma~\ref{lem:omegasatisfiesK}]
 We first consider the case when $K$ is not acyclic. In this case, in order for an element $(\omega,\tau)\in C_{\rpart}$ to have the property that $\omega$ satisfies $K$, it must be that $\omega$ contains a cycle induced by the conditions in $K$. Since $K$ has size $m$, this cycle has length at most $m$.  However, we assumed $C_{\rpart}$ has no cycles of length $1,2,\ldots,m$, so this is not possible. Hence, no such elements exist, and we conclude that  $\pr_{\rpart}[\omega \text{\ satisfies\ } K] = 0$. 

    For the case when $K$ is acyclic, we use induction on $n$ and $m$ to compute the probability that a colored permutation of cycle type $\rpart$ chosen uniformly at random satisfies $K$. As base cases, consider when $n$ is arbitrary and $m = 1$.  We express $K=\{(i,j)\}$ and analyze $\pr_{\rpart}[\omega(i) = j]$. Consider any $k \in [n] \setminus  \{i,j\}$. 
    Letting $\mathbf{0}:[n]\to \mathbb{Z}_r$ denote the zero coloring, conjugating by the colored permutation $((j,k),\mathbf{0})$ induces bijections between \[\{(\omega,\tau)\in C_{\rpart}\mid \omega(i)=j\} \quad \text{ and } \quad  \{(\omega,\tau)\in C_{\rpart}\mid \omega(i)=k\}.\]
    Therefore, $\pr_{\rpart}[\omega(i) = k]$ is invariant under our choice of $k\neq i$. As $C_{\rpart}$ contains no cycles of length $1$, we have that $\pr_{\rpart}[\omega(i) = k]=0 $ when $k=i$, and combined, we see that
\[
\pr_{\rpart}[\omega(i) = j] = \frac{1}{n-1}.
\]
\noindent This establishes our base case of $m = 1$ when $n$ is arbitrary. 

Now fix $n> m>1$, and suppose that the result holds for $n-1$ and $m-1$. Consider an acyclic partial colored permutation $(K,\kappa)$ on $\mathfrak{S}_{n,r}$ of size $m$ and a conjugacy class $C_{\rpart}$ of $\mathfrak{S}_{n,r}$ without cycles of length $1,2,\ldots m$. Express $K = \{ (i_{1}, j_{1}), \ldots, (i_{m}, j_{m})\}$, and partition $C_{\rpart}$ into $\bigsqcup_{k=1}^n \bigsqcup_{c\in \mathbb{Z}_r} \Omega_{k,c}$, 
 where \[\Omega_{k,c}=\{(\omega,\tau)\in C_{\rpart}\mid i_m \text{ appears in a cycle of length $k$ and color $c$}\}.\]
Using conditional expectations, the law of total probability, and our result in the preceding paragraph, we express $\pr_{\rpart}[\omega \text{ satisfies } K]$ as
\begin{equation}\label{eq:omegasatisfiesK}
\begin{split}
     &\,\pr_{\rpart} \left[\bigcap_{h=1}^{m} \{\omega(i_{h}) = j_{h}\} \right] \\
= & \,\pr_{\rpart} \left[\bigcap_{h=1}^{m-1}  \{\omega(i_{\ell}) = j_{h}\} \biggr| \omega(i_{m}) = j_{m} \right] \cdot \pr_{\rpart}[\omega(i_{m}) = j_{m}] \\
 =& \,\frac{1}{n-1} \cdot \sum_{k=1}^{n} \,\sum_{c\in\mathbb{Z}_r}\pr_{\rpart} \left[ \bigcap_{h=1}^{m-1}  \{\omega(i_{h}) = j_{h}\} \cap \Omega_{k,c} \,\biggr|\, \omega(i_{m}) = j_{m} \right].
 \end{split}
\end{equation}
Each summand above can then be expressed as 
\begin{equation}\label{eq:omegasatisfiesK2}
    \pr_{\rpart}\left[ \bigcap_{h=1}^{m-1} \{\omega(i_{h}) = j_{h}\} \,\biggr|\, \Omega_{k,c} \cap \{\omega(i_{m}) = j_{m}\} \right] \cdot \pr_{\rpart}[\Omega_{k,c} \mid \omega(i_{m}) = j_{m}].
\end{equation}

We now consider the acyclic partial colored permutation $K' = \{ (i_{1}, j_{1}), \ldots, (i_{m-1}, j_{m-1})\}$ of size $m-1$. We can assume without loss of generality that $i_m$ is a source vertex in $G(K,\kappa)$, so $i_m$ is not an element in any ordered pair in $K'$. For any fixed $k\in [n]$ and $c\in \mathbb{Z}_r$ where $\pr_{\rpart}[\Omega_{k,c}]\neq 0$, define $\rpart'$ to be the $r$-partition of $[n-1]$ obtained by starting with $\rpart$ and replacing a part of size $k$ and color $c$ with a part of size $k-1$ and color $c$. Since $\rpart$ contains only partitions whose parts all have lengths larger than $m$, $\rpart'$ contains only partitions whose parts all have lengths larger than $m-1$. Considering $C_{\rpart'}$ as a conjugacy class of the colored permutation group on $r$ copies of the elements in $[n]\setminus \{i_m\}$, define a function
\[\pi:\{(\omega,\tau)\in \Omega_{k,c}\mid \omega(i_m)=j_m\}\to C_{\rpart'}\]
that in the cycle notation of $(\omega,\tau)$ replaces $i_m^{\tau(i_m)} j_m^{\tau(j_m)}$ 
with $j_m^{\tau(i_m)+\tau(j_m)}$. Observe that $\pi$ is an $r$-to-$1$ map from $C_{\rpart}$ to $C_{\rpart'}$. Additionally, for a colored permutation $(\omega,\tau)$ in the domain with image $(\omega',\tau')\in C_{\lambda'}$, we have that $\omega$ satisfies $K'$ if and only if $\omega'$ satisfies $K'$. In particular, if we let $(\omega',\tau')\in C_{\rpart'}$ be generated uniformly at random, then this observation combined with our induction hypothesis implies that for each fixed $k$ and $c$,
\begin{equation*}
\begin{split}
& \pr_{\rpart} \left[ \bigcap_{h=1}^{m-1} \{\omega(i_{h}) = j_{h}\} \, \biggr| \, \Omega_{k,c} \cap \{\omega(i_{m}) = j_{m})\} \right] \\
=& \, \pr_{\rpart'} \left[\bigcap_{h=1}^{m-1} \{\omega'(i_{h}) 
= j_{h} \}\right] \\
=& \, \frac{1}{(n-2)(n-3) \cdots (n-m)}.
\end{split}
\end{equation*}
Note that the first term is $(n-1)-1$ and the last term is $(n-1)-(m-1)$ since this probability involves $\mathfrak{S}_{n-1,r}$ and a partial colored permutation of size $m-1$. Returning to \eqref{eq:omegasatisfiesK} and \eqref{eq:omegasatisfiesK2}, we conclude
\begin{align*}
& \pr_{\rpart}\left[ \bigcap_{k=1}^{n} \{\omega(i_{k}) = j_{k}\} \right]  \\
= & \,\frac{1}{n-1}\sum_{k=1}^{n} \,\sum_{c\in \mathbb{Z}_r} \frac{1}{(n-2)(n-3)\cdots (n-m)} \cdot \pr_{\rpart}[\Omega_{k,c} \mid \omega(i_{m}) = j_{m}]\\
= &\, \frac{1}{(n-1)(n-2)\cdots (n-m)} \cdot \sum_{k=1}^{n} \,\sum_{c\in \mathbb{Z}_r}\pr_{\rpart}[\Omega_{k,c} \mid \omega(i_{m}) = j_{m}]\\
=& \, \frac{1}{(n-1)(n-2) \cdots (n-m)}. \qedhere 
\end{align*} 
\end{proof}

\begin{lemma} \label{lem:HigherMomentsIndependence}
Let $(K, \kappa)$ be a partial colored permutation on $\mathfrak{S}_{n,r}$ of size $m$, and let $C_{\rpart}$ be a conjugacy class of $\mathfrak{S}_{n,r}$ with no cycles of length $1,2,\ldots,m$. \noindent If $K$ is not acyclic, then
\[
\pr_{\rpart}[(\omega, \tau) \text{\ satisfies\ } (K, \kappa)] = 0.
\]
If $K$ is acyclic, then
\[
\pr_{\rpart}[(\omega, \tau) \text{\ satisfies\ } (K, \kappa)] = \frac{1}{(n-1)(n-2) \cdots (n-m)} \cdot \frac{1}{r^{m}}.
\]
\end{lemma}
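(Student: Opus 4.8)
The plan is to reduce the colored statement to the uncolored one already proved in Lemma~\ref{lem:omegasatisfiesK}, by conditioning on the underlying permutation $\omega$ and showing that, given $\omega$ satisfies $K$, the color assignment $\tau$ looks uniform on the relevant coordinates. For the non-acyclic case there is nothing new: if $K$ is not acyclic then even $\pr_{\rpart}[\omega \text{ satisfies } K] = 0$, and since satisfying $(K,\kappa)$ is a strictly stronger event, its probability is also $0$. So the entire content is the acyclic case.

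For the acyclic case, first I would write
\[
\pr_{\rpart}[(\omega,\tau)\text{ satisfies }(K,\kappa)] = \pr_{\rpart}[\tau(j_h) = \kappa(j_h)\ \forall h \mid \omega\text{ satisfies }K]\cdot \pr_{\rpart}[\omega\text{ satisfies }K],
\]
and invoke Lemma~\ref{lem:omegasatisfiesK} to replace the second factor by $1/[(n-1)(n-2)\cdots(n-m)]$. It then remains to show the conditional probability equals $1/r^m$. The natural way to see this is a symmetry/bijection argument internal to the conjugacy class $C_{\rpart}$: fix any permutation $\omega$ that satisfies $K$ (equivalently, fix the set of colored permutations $(\omega,\tau)\in C_{\rpart}$ with this underlying $\omega$), and show that for each of the $r^m$ possible values of $(\kappa(j_1),\ldots,\kappa(j_m))\in \mathbb{Z}_r^m$, the number of valid colorings $\tau$ is the same. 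Because $K$ is acyclic, its graph $G(K,\kappa)$ is a disjoint union of directed paths covering $2m' \le$ (roughly) $m$ vertices; the key point is that the constrained vertices $j_1,\ldots,j_m$ are exactly the non-source vertices of these paths together with... more precisely, each $j_h$ is the head of edge $h$, and acyclicity guarantees we can reorder so that $i_m$ is a source, so $j_1,\dots,j_m$ are all distinct from that source but the structure of which $j_h$ coincide with which $i_{h'}$ matters. I would argue that recoloring along a path shifts colors in a way that is invertible: given an element $(\omega,\tau)$ with $\omega$ satisfying $K$ and a target tuple $(c_1,\dots,c_m)$, one conjugates by (or directly modifies $\tau$ via) an appropriate coloring supported on the path vertices to realize exactly a bijection between $\{(\omega,\tau): \tau(j_h)=\kappa(j_h)\}$ and $\{(\omega,\tau):\tau(j_h)=c_h\}$ while staying in $C_{\rpart}$ and keeping the same $\omega$. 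Since colors of cycles are preserved under such recoloring along a path that doesn't close up a cycle (an acyclic path does not form a cycle of $\omega$), the cycle type $\rpart$ is unchanged, so these sets all have equal size, giving the conditional probability $1/r^m$.

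Alternatively — and this may be cleaner to write — I would avoid conditioning and instead run a direct induction on $m$ paralleling the proof of Lemma~\ref{lem:omegasatisfiesK}, peeling off a source edge $(i_m, j_m)$: the same $r$-to-$1$ map $\pi$ used there collapses $i_m^{\tau(i_m)} j_m^{\tau(j_m)}$ to $j_m^{\tau(i_m)+\tau(j_m)}$, and one checks that exactly $r$ of the $r^2$ choices of $(\tau(i_m),\tau(j_m))$ are consistent with $\kappa(j_m)$ once the image coloring is fixed (since $\tau(i_m)$ is free and then $\tau(j_m)$ is forced, or vice versa), contributing one factor of $1/r$, with the remaining $1/r^{m-1}$ coming from the inductive hypothesis applied to $K' = \{(i_1,j_1),\ldots,(i_{m-1},j_{m-1})\}$ on $\mathfrak{S}_{n-1,r}$. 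A subtle point requiring care: $j_m$ may coincide with some $i_{h}$ for $h<m$ (if the path continues past $j_m$), so when applying $\pi$ one must track that the color constraint $\kappa(j_m)$ interacts correctly with the relabeled constraints in $K'$; since $i_m$ is a source, $j_m$ is never equal to any $j_h$ with $h<m$, so $\kappa(j_m)$ imposes an independent constraint, and this is where acyclicity is used essentially.

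The main obstacle I anticipate is the bookkeeping in showing the color-counting factor is exactly $1/r$ per edge and genuinely independent across edges — i.e., making rigorous that the $m$ constraints $\tau(j_h) = \kappa(j_h)$ behave like $m$ independent uniform constraints on the conditional space $\{\omega \text{ satisfies } K\}$. The cleanest framing is probably the bijection argument: exhibit, for any two target color-tuples differing in one coordinate, an explicit color-shift bijection on $C_{\rpart}$ preserving $\omega$, and note it changes the cycle color of exactly zero cycles (since the relevant vertices lie on an open path, not a cycle of $\omega$), so the bijection stays within $C_{\rpart}$; iterating over coordinates gives equinumerosity of all $r^m$ fibers and hence the $1/r^m$ factor. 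I would present the induction version in the main text and remark that acyclicity (via the source vertex $i_m$) is exactly what decouples the color constraint at $j_m$ from those in $K'$.
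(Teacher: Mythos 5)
Your overall strategy matches the paper's: factor the probability as $\pr_{\rpart}[\omega \text{ satisfies } K]\cdot \pr_{\rpart}[\tau \text{ satisfies } \kappa \mid \omega \text{ satisfies } K]$, dispose of the first factor with Lemma~\ref{lem:omegasatisfiesK}, and show the second factor is $1/r^{m}$ by an equidistribution argument on colorings. The gap is in how you justify that your recoloring bijection stays inside $C_{\rpart}$. You write that it ``changes the cycle color of exactly zero cycles (since the relevant vertices lie on an open path, not a cycle of $\omega$).'' That is not a valid justification: every element of $[n]$ lies on some cycle of the permutation $\omega$, and shifting $\tau(j_h)$ by itself changes the color (the sum of $\tau$ over that cycle) of the $\omega$-cycle containing $j_h$, which moves you out of $C_{\rpart}$. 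To stay in the conjugacy class you must compensate by subtracting the same amount from the color of another element of the \emph{same $\omega$-cycle}, and that element must avoid $\{j_1,\ldots,j_m\}$ so as not to disturb the other constraints. Producing such an element is exactly where the no-short-cycles hypothesis enters the color half of the argument: the paper defines a free action of $\mathbb{Z}_r^m$ on all of $C_{\rpart}$ in which $e_h$ adds $1$ to $\tau(j_h)$ and subtracts $1$ from $\tau(i)$, where $i$ is the smallest element of the $\omega$-cycle containing $j_h$ that is not among $j_1,\ldots,j_m$; such an $i$ exists because every cycle has length greater than $m$. Each orbit has size $r^m$, meets $\{\tau \text{ satisfies } \kappa\}$ exactly once, and $\{\omega \text{ satisfies } K\}$ is a union of orbits, giving the conditional probability $1/r^m$. (On the fiber over a fixed $\omega$ satisfying $K$ you could instead compensate at the source of the path of $G(K,\kappa)$ containing $j_h$, which lies in the same $\omega$-cycle precisely because $\omega$ satisfies $K$; but this has to be said explicitly.)

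Your alternative inductive route --- peeling off a source edge and pushing the constraint $\tau(j_m)=\kappa(j_m)$ through the $r$-to-$1$ collapsing map $\pi$ --- is sound and would also work: among the $r$ preimages of a fixed $(\omega',\tau')$ exactly one has $\tau(j_m)=\kappa(j_m)$, and $\pi$ preserves the colors of $j_1,\ldots,j_{m-1}$ (which are distinct from both $i_m$ and $j_m$), so the remaining constraints descend to the inductive instance on $\mathfrak{S}_{n-1,r}$. Either repair closes the argument; as written, the first sketch does not.
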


\begin{proof}
    First, we express
    \begin{equation*}
        \pr_{\rpart}[(\omega, \tau) \text{ satisfies } (K, \kappa)] = \pr_{\rpart}[\omega \text{ satisfies } K] \cdot \pr_{\rpart}[\tau \text{ satisfies } \kappa \mid \omega \text{ satisfies } K].
    \end{equation*}
    After applying Lemma~\ref{lem:omegasatisfiesK} to the first term on the right side, it now suffices to show that when $(K,\kappa)$ is acyclic, the second term is $1/r^m$.

    Express $K=\{(i_h,j_h)\}_{h=1}^m$. Define an action of $\mathbb{Z}_r^m$ on $C_{\rpart}$ as follows: the $m$-tuple $e_h$ with 0 everywhere and $1$ in position $h$ acts on $(\omega,\tau)$ by
    \begin{itemize}
        \item adding $1$ to the color $\tau(j_h)$, and
        \item subtracting $1$ from the color $\tau(i)$, where $i\in [n]\setminus \{j_1,\ldots,j_m\}$ is the smallest element that appears in the cycle containing $j_h$.
    \end{itemize}
    Since $C_{\rpart}$ contains no cycles of length $1,2,\ldots,m$, the element $i$ always exists. It is straightforward to see that extending this to all elements in $\mathbb{Z}_r^m$ results in a well-defined group action on $C_{\rpart}$, and each orbit has size $r^m$. It is clear that each orbit contains an element $(\omega,\tau)$ such that $\tau$ satisfies $\kappa$, and since the action of any nonzero element in $\mathbb{Z}_r^m$ on $(\omega,\tau)$ results in a colored permutation $(\omega,\tau')$ where $\tau'$ does not satisfy $\kappa$, we conclude that exactly one element in each orbit has this property. Furthermore, the subset $\{(\omega,\tau)\in C_{\rpart} \mid \omega \text{ satisfies } K\}$ is invariant under this action, and this allows us to conclude the desired result that 
    \[\pr_{\rpart}[\tau \text{ satisfies } \kappa \mid \omega \text{ satisfies } K]=\pr_{\rpart}[\tau \text{ satisfies } \kappa]=\frac{1}{r^m}. \qedhere\]
 \end{proof}

 Our proof in the preceding lemma also implies the following result, namely that on conjugacy classes without short cycles, satisfying $K$ and satisfying $\kappa$ are independent. 

\begin{corollary}
    Let $(K,\kappa)$ be a partial colored permutation  on $\mathfrak{S}_{n,r}$ of size $m$, and let $C_{\rpart}$ be a conjugacy class of $\mathfrak{S}_{n,r}$ with no cycles of length $1,2,\ldots,m$. Then
    \[\pr_{\rpart}[(\omega, \tau) \text{ satisfies } (K, \kappa)] = \pr_{\rpart}[\omega \text{ satisfies } K] \cdot \pr_{\rpart}[\tau \text{ satisfies } \kappa].\]
\end{corollary}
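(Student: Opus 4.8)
The plan is to derive the Corollary directly from the two quantities already computed in Lemmas~\ref{lem:omegasatisfiesK} and \ref{lem:HigherMomentsIndependence}, together with an independent computation of $\pr_{\rpart}[\tau \text{ satisfies } \kappa]$. Since the Corollary is an equation of three probabilities, the cleanest route is to evaluate each side.

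\textbf{Step 1: Dispose of the non-acyclic case.} If $K$ is not acyclic, then by Lemma~\ref{lem:omegasatisfiesK}, $\pr_{\rpart}[\omega \text{ satisfies } K] = 0$, so the right-hand side of the claimed identity is $0$; and by Lemma~\ref{lem:HigherMomentsIndependence}, the left-hand side $\pr_{\rpart}[(\omega,\tau)\text{ satisfies }(K,\kappa)]$ is also $0$. So the identity holds trivially, and we may assume for the remainder that $K$ is acyclic.

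\textbf{Step 2: Compute $\pr_{\rpart}[\tau \text{ satisfies } \kappa]$ in isolation.} The key observation is that the group action of $\mathbb{Z}_r^m$ on $C_{\rpart}$ constructed in the proof of Lemma~\ref{lem:HigherMomentsIndependence} is defined without any reference to whether $\omega$ satisfies $K$ — it uses only the pairs $(i_h, j_h)$ to locate where colors are added and subtracted, and the no-short-cycles hypothesis guarantees the requisite element $i$ always exists. That proof shows every orbit has size $r^m$ and contains exactly one element whose coloring $\tau$ satisfies $\kappa$. Hence, partitioning all of $C_{\rpart}$ into these orbits, exactly a $1/r^m$ fraction of $C_{\rpart}$ has $\tau$ satisfying $\kappa$, i.e. $\pr_{\rpart}[\tau \text{ satisfies } \kappa] = 1/r^m$. (Alternatively, one could simply cite the chain of equalities at the end of the proof of Lemma~\ref{lem:HigherMomentsIndependence}, which already records that the conditional and unconditional probabilities both equal $1/r^m$; the Corollary is essentially a restatement of that line, but phrasing it as a standalone multiplicative identity is what makes it quotable.)

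\textbf{Step 3: Multiply.} In the acyclic case, Lemma~\ref{lem:omegasatisfiesK} gives $\pr_{\rpart}[\omega \text{ satisfies } K] = \frac{1}{(n-1)(n-2)\cdots(n-m)}$, Step 2 gives $\pr_{\rpart}[\tau \text{ satisfies } \kappa] = \frac{1}{r^m}$, and their product is $\frac{1}{(n-1)(n-2)\cdots(n-m)}\cdot\frac{1}{r^m}$, which is exactly the value of $\pr_{\rpart}[(\omega,\tau)\text{ satisfies }(K,\kappa)]$ given by Lemma~\ref{lem:HigherMomentsIndependence}. This completes the proof. I do not anticipate a genuine obstacle here: the entire content was already extracted inside the proof of Lemma~\ref{lem:HigherMomentsIndependence} (the $\mathbb{Z}_r^m$-action and the fact that the event $\{\omega \text{ satisfies } K\}$ is invariant under it, which is what made the conditional probability equal the unconditional one). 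The only mild care needed is to make explicit that the $1/r^m$ computation does not secretly depend on conditioning on $K$ — but since the action itself is $K$-independent and acts on all of $C_{\rpart}$, this is immediate.
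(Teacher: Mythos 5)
Your proposal is correct and is essentially the paper's own argument: the paper derives this corollary directly from the proof of Lemma~\ref{lem:HigherMomentsIndependence}, where the $\mathbb{Z}_r^m$-action on all of $C_{\rpart}$ already shows $\pr_{\rpart}[\tau \text{ satisfies } \kappa \mid \omega \text{ satisfies } K]=\pr_{\rpart}[\tau \text{ satisfies } \kappa]=1/r^m$, which combined with Lemma~\ref{lem:omegasatisfiesK} (and the trivial non-acyclic case) gives the product formula. Your only addition is to make explicit that the action is defined independently of the event $\{\omega \text{ satisfies } K\}$, which is a worthwhile clarification but not a different route.
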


\begin{remark}
    Our preceding results also imply that when $K$ has size $m$ and $C_{\rpart}$ has no cycles of length $1,2,\ldots,m$,
    \[\pr_{\rpart}[\omega \text{ satisfies } K]=\pr_{\rpart}[\omega \text{ satisfies } K\mid \tau \text{ satisfies }\kappa]= \frac{1}{(n-1)(n-2)\ldots(n-m)}.\]
    While one can attempt to show the second equality directly, we found this to be much more technical than the proof of Lemma~\ref{lem:omegasatisfiesK}.
\end{remark}

Lemma~\ref{lem:HigherMomentsIndependence} allows us to make statements about the mean of statistics on conjugacy classes without short cycles, but our results involve arbitrary moments. To connect these, we will need to analyze products of these indicator functions $I_{(K,\kappa)}$. The terminology below is adapted from \cite{hamaker2022characters}.

\begin{definition}
Two partial colored permutations $(K_{1}, \kappa_{1})$ and $(K_{2}, \kappa_{2})$ on $\mathfrak{S}_{n,r}$ are \textit{compatible} if there exists a colored permutation $(\omega, \tau) \in \mathfrak{S}_{n,r}$ satisfying both partial colored permutations.
\end{definition}

Note that when two partial colored permutations are compatible, we can use the unions $K_1\cup K_2$ and $\kappa_1\cup \kappa_2$ to define a new partial colored permutation, and the size of $(K_1\cup K_2,\tau_1\cup \tau_2)$ is bounded by the sum of the sizes of $(K_1,\tau_1)$ and $(K_1,\tau_2)$. Using this, the following results are straightforward exercises.

\begin{lemma} \label{lem:HRLem4.2}
Let $(K_{1}, \kappa_{1})$ and $(K_{2}, \kappa_{2})$ be two partial colored permutations  on $\mathfrak{S}_{n,r}$. If $(K_{1}, \kappa_{1})$ and $(K_{2}, \kappa_{2})$ are not compatible, then $I_{(K_{1}, \kappa_{1})} \cdot I_{(K_{2}, \kappa_{2})}$ is identically  zero. If $(K_{1}, \kappa_{1})$ and $(K_{2}, \kappa_{2})$ are compatible, then $I_{(K_{1}, \kappa_{1})} \cdot I_{(K_{2}, \kappa_{2})} = I_{(K_1 \cup K_2, \kappa_1 \cup \kappa_2)}$.
\end{lemma}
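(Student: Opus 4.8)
The plan is to prove both assertions by unwinding the definition of the indicator statistics and comparing $\{0,1\}$-valued functions pointwise on $\mathfrak{S}_{n,r}$. By construction, for any $(\omega,\tau)\in\mathfrak{S}_{n,r}$ the product $I_{(K_1,\kappa_1)}(\omega,\tau)\cdot I_{(K_2,\kappa_2)}(\omega,\tau)$ equals $1$ exactly when $(\omega,\tau)$ satisfies both $(K_1,\kappa_1)$ and $(K_2,\kappa_2)$, and equals $0$ otherwise. For the incompatible case this finishes things in one line: incompatibility says precisely that no colored permutation satisfies both partial colored permutations, so the product vanishes identically.

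For the compatible case, the first step I would carry out is to check that $(K_1\cup K_2,\kappa_1\cup\kappa_2)$ is actually a legitimate partial colored permutation in the sense of Definition~\ref{def:coloredconstraint}; this is the only place where compatibility is genuinely used. Fix a witness $(\omega,\tau)$ satisfying both $(K_i,\kappa_i)$. Since $\omega$ satisfies $K_1$ and $K_2$, every ordered pair $(i,j)$ occurring in $K_1\cup K_2$ records the value $\omega(i)=j$; because $\omega$ is a bijection of $[n]$, distinct first coordinates in $K_1\cup K_2$ force distinct second coordinates and conversely, so the first coordinates (resp.\ second coordinates) appearing in $K_1\cup K_2$ are pairwise distinct. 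Likewise, if some $j$ lies in the domains of both $\kappa_1$ and $\kappa_2$, then $\kappa_1(j)=\tau(j)=\kappa_2(j)$, so $\kappa_1\cup\kappa_2$ is a well-defined function whose domain is exactly the set of second coordinates of $K_1\cup K_2$. Hence $(K_1\cup K_2,\kappa_1\cup\kappa_2)$ is a partial colored permutation, of size at most $|K_1|+|K_2|$.

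With that in hand, the equality of indicator functions is pure bookkeeping: for an arbitrary $(\omega',\tau')\in\mathfrak{S}_{n,r}$, the permutation $\omega'$ satisfies $K_1\cup K_2$ if and only if it satisfies both $K_1$ and $K_2$, and $\tau'$ satisfies $\kappa_1\cup\kappa_2$ if and only if it satisfies both $\kappa_1$ and $\kappa_2$; combining, $(\omega',\tau')$ satisfies $(K_1\cup K_2,\kappa_1\cup\kappa_2)$ precisely when it satisfies $(K_1,\kappa_1)$ and $(K_2,\kappa_2)$, i.e.\ precisely when the product of the two indicators is $1$. Therefore $I_{(K_1,\kappa_1)}\cdot I_{(K_2,\kappa_2)}=I_{(K_1\cup K_2,\kappa_1\cup\kappa_2)}$. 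The only step demanding any attention is the well-definedness verification in the middle paragraph; everything else is immediate from the definitions, so I would present the argument in exactly the order above (incompatible case, then well-definedness of the union, then the pointwise comparison).
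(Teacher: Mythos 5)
Your proof is correct and follows exactly the route the paper intends: the paper leaves this lemma as a ``straightforward exercise,'' with the preceding remark about $(K_1\cup K_2,\kappa_1\cup\kappa_2)$ being a well-defined partial colored permutation serving as the one nontrivial step, which is precisely the well-definedness verification you carry out via a compatible witness. Nothing is missing.
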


\begin{corollary} \label{cor:Thm7.16}
Suppose $X_{1}$ and $X_{2}$ are statistics on $\mathfrak{S}_{n,r}$ with degrees at most $m_1$ and $m_2$, respectively. Then $X_1\cdot X_2$ has degree at most $m_1+m_2$. In particular, for any integer $k\geq 1$ such that $m_1k\leq n$, we have that $X_1^k$ has degree at most $m_1k$.
\end{corollary}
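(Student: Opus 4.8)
The plan is to reduce the claim to the bilinear expansion of the product followed by a term-by-term application of \Lem{lem:HRLem4.2}. Since $X_1$ has degree at most $m_1$, by \Def{def:degree} we may write $X_1=\sum_{(K,\kappa)\in\mathcal{C}_{m_1}}a_{(K,\kappa)}\,I_{(K,\kappa)}$ for some real coefficients $a_{(K,\kappa)}$, and likewise $X_2=\sum_{(L,\nu)\in\mathcal{C}_{m_2}}b_{(L,\nu)}\,I_{(L,\nu)}$. Multiplying these out,
\[
X_1\cdot X_2=\sum_{(K,\kappa)\in\mathcal{C}_{m_1}}\;\sum_{(L,\nu)\in\mathcal{C}_{m_2}}a_{(K,\kappa)}\,b_{(L,\nu)}\;I_{(K,\kappa)}\cdot I_{(L,\nu)}.
\]

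Next I would dispatch each product $I_{(K,\kappa)}\cdot I_{(L,\nu)}$ via \Lem{lem:HRLem4.2}. When $(K,\kappa)$ and $(L,\nu)$ are incompatible, the product is identically zero and may be discarded. When they are compatible, the product equals $I_{(K\cup L,\,\kappa\cup\nu)}$, and since $|K\cup L|\le|K|+|L|\le m_1+m_2$, the pair $(K\cup L,\,\kappa\cup\nu)$ belongs to $\mathcal{C}_{m_1+m_2}$. Hence every surviving summand is a scalar multiple of some $I_{(M,\mu)}$ with $(M,\mu)\in\mathcal{C}_{m_1+m_2}$, so $X_1\cdot X_2$ lies in the real vector space spanned by $\{I_{(M,\mu)}:(M,\mu)\in\mathcal{C}_{m_1+m_2}\}$; by \Def{def:degree} this is exactly the assertion that $X_1\cdot X_2$ has degree at most $m_1+m_2$.

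For the final assertion of the corollary I would induct on $k$. The base case $k=1$ is immediate, as $X_1^1=X_1$ has degree at most $m_1=m_1\cdot 1$. For the inductive step, assume $X_1^{k-1}$ has degree at most $m_1(k-1)$; writing $X_1^k=X_1^{k-1}\cdot X_1$ and applying the first part of the corollary to the statistics $X_1^{k-1}$ and $X_1$ yields that $X_1^k$ has degree at most $m_1(k-1)+m_1=m_1k$. The hypothesis $m_1k\le n$ is not actually needed for this argument; it merely ensures that the bound $m_1k$ improves on the trivial bound $n$ (recall every colored permutation statistic on $\mathfrak{S}_{n,r}$ has degree at most $n$), so that the conclusion carries genuine information.

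As for where the difficulty lies: there is essentially none, the substance having been absorbed into \Lem{lem:HRLem4.2} and the observation preceding it that the union of two compatible partial colored permutations is again a partial colored permutation whose size is at most the sum of the two sizes. The only point worth a moment's attention is that the colorings glue consistently — on the overlap of the two sets of second coordinates, $\kappa$ and $\nu$ must agree — and this is exactly what compatibility provides, since a common witness $(\omega,\tau)$ forces $\tau(j)=\kappa(j)=\nu(j)$ on that overlap.
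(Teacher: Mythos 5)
Your argument is correct and is exactly the one the paper intends: the paper states this corollary as a ``straightforward exercise'' following Lemma~\ref{lem:HRLem4.2}, and your bilinear expansion plus term-by-term application of that lemma, followed by induction on $k$, is the standard route. Your closing observations --- that compatibility is what makes $\kappa\cup\nu$ well defined on the overlap, and that the hypothesis $m_1k\le n$ only serves to make the bound nontrivial --- are both accurate.
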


By combining these two facts, we can now establish one of our main results.

\begin{theorem}[Theorem~\ref{thm:MainIndependence}] \label{thm:IndependenceHigherMoments}
    Suppose $X:\mathfrak{S}_{n,r}\to \mathbb{R}$ has degree at most $m$. For any $k\geq 1$, the $k$th moment $\E_{\rpart}[X^k]$ coincides on all conjugacy classes $C_{\rpart}$ with no cycles of length $1,2,\ldots,mk$.
\end{theorem}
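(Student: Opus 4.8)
The plan is to reduce the $k$th moment to a linear combination of means of indicator functions $I_{(K,\kappa)}$ for partial colored permutations of size at most $mk$, and then apply Lemma~\ref{lem:HigherMomentsIndependence}, whose value depends only on whether $K$ is acyclic and on $n$, $m$, $r$ --- not on the particular conjugacy class $C_{\rpart}$ (as long as it has no short cycles). Concretely: since $X$ has degree at most $m$, by Corollary~\ref{cor:Thm7.16} the statistic $X^k$ has degree at most $mk$, so we may write $X^k = \sum_{(K,\kappa) \in \mathcal{C}_{mk}} a_{(K,\kappa)} I_{(K,\kappa)}$ for some real coefficients $a_{(K,\kappa)}$ that do \emph{not} depend on $\rpart$. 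Taking expectations over $C_{\rpart}$ and using linearity,
\[
\E_{\rpart}[X^k] = \sum_{(K,\kappa) \in \mathcal{C}_{mk}} a_{(K,\kappa)} \, \E_{\rpart}[I_{(K,\kappa)}] = \sum_{(K,\kappa) \in \mathcal{C}_{mk}} a_{(K,\kappa)} \, \pr_{\rpart}[(\omega,\tau) \text{ satisfies } (K,\kappa)].
\]

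Next I would invoke Lemma~\ref{lem:HigherMomentsIndependence}: for any partial colored permutation $(K,\kappa)$ of size $m' \le mk$, and any conjugacy class $C_{\rpart}$ with no cycles of length $1, 2, \ldots, mk$ (hence in particular none of length $1, \ldots, m'$), the probability $\pr_{\rpart}[(\omega,\tau) \text{ satisfies } (K,\kappa)]$ equals $0$ if $K$ is not acyclic, and equals $\frac{1}{(n-1)(n-2)\cdots(n-m')} \cdot \frac{1}{r^{m'}}$ if $K$ is acyclic. In both cases the value is a function only of $n$, $r$, and the combinatorial data of $(K,\kappa)$, and is completely independent of which conjugacy class $C_{\rpart}$ we chose among those without cycles of length $\le mk$. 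Substituting back, $\E_{\rpart}[X^k]$ is a sum whose terms are each independent of $\rpart$, so $\E_{\rpart}[X^k]$ takes the same value on all such conjugacy classes, as claimed.

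There is one bookkeeping subtlety worth flagging rather than a genuine obstacle: to apply Corollary~\ref{cor:Thm7.16} we implicitly want $mk \le n$; if $mk > n$ then there are no conjugacy classes of $\mathfrak{S}_{n,r}$ with no cycles of length $1, \ldots, mk$ at all (every cycle has length at most $n$), so the statement is vacuously true and we lose nothing. I would either note this vacuous case explicitly or observe that $X^k$ always has degree at most $n$ (since any statistic does), and that a partial colored permutation of size $> n$ never arises; the acyclicity hypothesis in Lemma~\ref{lem:HigherMomentsIndependence} forces $|K| < n$ in the nonzero case anyway. The only thing requiring any care is making sure the coefficients $a_{(K,\kappa)}$ in the expansion of $X^k$ genuinely do not depend on $\rpart$ --- but this is immediate since the expansion is an identity of functions on all of $\mathfrak{S}_{n,r}$, obtained from the fixed degree-$m$ expansion of $X$ by multiplying out and applying Lemma~\ref{lem:HRLem4.2} to collapse products of indicators, none of which references a conjugacy class. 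Hence the main work is entirely in the already-proven Lemmas~\ref{lem:HigherMomentsIndependence} and \ref{lem:HRLem4.2} and Corollary~\ref{cor:Thm7.16}, and the theorem follows by assembling them.
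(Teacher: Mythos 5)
Your proof is correct and follows essentially the same route as the paper: expand $X^k$ into indicators of partial colored permutations of size at most $mk$ via Corollary~\ref{cor:Thm7.16}, then apply Lemma~\ref{lem:HigherMomentsIndependence} termwise and observe that each resulting value depends only on $n$, $r$, and acyclicity of $K$, not on $\rpart$. The paper organizes this by first treating $k=1$ and then reducing higher moments to that case, and dismisses the $mk \geq n$ situation with the same vacuousness observation you flag, so the two arguments are the same in substance.
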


\begin{proof}
    We first consider the case when $k=1$ and $C_{\lambda}$ has no cycles of length $1,2,\ldots,m$. Express $X=\sum_{(K,k)\in \mathcal{C}} c_{(K,\kappa)} I_{(K,\kappa)}$ where $c_{(K,\kappa)}\in \mathbb{R}$ and each $(K,k)\in \mathcal{C}$ has size at most $m$. Linearity of expectation implies
    \begin{equation}\label{eq:firstmoment}
        \E_{\rpart}[X]=\sum_{(K,\kappa)\in \mathcal{C}} c_{(K,\kappa)}\cdot \E_{\rpart}[I_{(K,\kappa)}].
    \end{equation}
    Applying Lemma \ref{lem:HigherMomentsIndependence} to each summand, either $\E_{\rpart}[I_{(K,\kappa)}]=0$ or 
    \begin{equation}\label{eq:indicator_mean}
        \E_{\rpart}[I_{(K,\kappa)}]=\frac{1}{(n-1)(n-2)\dots (n-|K|)}\cdot \frac{1}{r^{|K|}},
    \end{equation}
    the latter case corresponding to acyclicity of $(K,\kappa)$. Acyclicity of $(K,\kappa)$ is independent of $C_{\rpart}$, so (\ref{eq:firstmoment}) is equivalent to the following expression that is independent of $C_{\rpart}$:
    \[\sum_{\substack{(K,\kappa)\in \mathcal{C} \\ \text{acyclic}}}\frac{c_{(K,\kappa)}}{(n-1)(n-2)\dots (n-|K|)}\cdot \frac{1}{r^{|K|}}.\]

    For higher moments, it suffices to consider when $mk< n$. Corollary \ref{cor:Thm7.16} implies that if $X$ is has degree at most $m$, then $X^k$ has degree at most $mk$. The general result for $\E_{\lambda}[X^k]$ now follows by combining this with the above result. 
\end{proof}

\begin{corollary}[Compare to $\mathfrak{S}_{n}$, {\cite[Corollary~7.17]{GRWCPermutationStatistics}}]
Let $X : \mathfrak{S}_{n,r} \to \mathbb{R}$ be a colored permutation statistic. Suppose that for some $j\in \mathbb{Z}_r$, $k\geq 1$, and $i\geq mk+1$, the moment $\mathbb{E}_{\rpart}[X^k]$ depends on $m_i(\lambda^j)$. Then $X$ cannot have degree $k$. 
\end{corollary}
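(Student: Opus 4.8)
I would recognize this corollary as nothing more than the contrapositive of \Thm{thm:IndependenceHigherMoments}, and deduce it in a couple of lines. So suppose toward a contradiction that $X$ has degree at most $m$, and fix $j\in\mathbb{Z}_r$, $k\ge 1$, and $i\ge mk+1$ as in the hypothesis. Applying \Thm{thm:IndependenceHigherMoments}, the $k$th moment $\E_{\rpart}[X^k]$ takes a single common value across every conjugacy class $C_{\rpart}$ of $\mathfrak{S}_{n,r}$ with no cycles of length $1,2,\ldots,mk$ (the cutoff $mk$ here is precisely the degree bound on $X^k$ supplied by \Cor{cor:Thm7.16}, so nothing extra needs to be reproved).

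\textbf{Carrying it out.} Next I would observe that since $i\ge mk+1>mk$, a cycle of length $i$ is never one of the excluded short cycles. Hence any two $r$-partitions $\rpart,\rpart'$ of $n$, all of whose parts have length strictly greater than $mk$, satisfy $\E_{\rpart}[X^k]=\E_{\rpart'}[X^k]$, since both $C_{\rpart}$ and $C_{\rpart'}$ lie in the family on which the moment was just shown to be constant. Choosing such a pair that realizes two different values of $m_i(\lambda^j)$ — for instance, taking $\rpart$ to have two color-$j$ parts of length $i$ and forming $\rpart'$ by replacing them with a single color-$j$ part of length $2i$, leaving the remaining parts (all of length $>mk$) untouched — exhibits two conjugacy classes without short cycles, differing in $m_i(\lambda^j)$, on which $\E[X^k]$ agrees. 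Thus $\E_{\rpart}[X^k]$ cannot depend on $m_i(\lambda^j)$, contradicting the hypothesis; therefore $X$ cannot have degree at most $m$ (equivalently, $\deg X\ge m+1$).

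\textbf{Where the difficulty is.} Essentially all of the content lives in \Thm{thm:IndependenceHigherMoments}; the corollary is a short deduction, and there is no genuine obstacle. The one point that takes a little care is pinning down the phrase ``$\E_{\rpart}[X^k]$ depends on $m_i(\lambda^j)$'': one must produce, for the ambient $n$, two conjugacy classes of $\mathfrak{S}_{n,r}$ that are free of cycles of length $\le mk$ and whose numbers of color-$j$, length-$i$ cycles differ, so that the equality forced by \Thm{thm:IndependenceHigherMoments} directly contradicts the claimed dependence. Such a pair exists whenever $n$ is large enough to admit one; in the remaining small-$n$ regimes — in particular whenever $mk\ge n$, in which case $i>n$ forces $m_i(\lambda^j)=0$ identically and the hypothesis is vacuous — we simply follow the conventions of \cite{GRWCPermutationStatistics} for these edge cases.
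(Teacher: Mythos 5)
Your proof is correct and is exactly the argument the paper intends: the corollary is stated there without proof as the immediate contrapositive of \Thm{thm:IndependenceHigherMoments}, and your explicit exhibition of two conjugacy classes free of cycles of length at most $mk$ that differ in $m_i(\lambda^j)$ supplies the one small detail left implicit. (The printed conclusion ``cannot have degree $k$'' should read ``cannot have degree at most $m$''; your reading is the intended one.)
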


We note that the argument in Theorem \ref{thm:IndependenceHigherMoments} has practical applications for computing moments of statistics on those conjugacy classes. We give an example below.

\begin{ex}\label{rem:negsum}
    Express $\negative$ on $B_n$ as 
    \[\negative = \sum_{i\in [n],k\in [n]} (-k)\cdot I_{(i,-k)}.\]
    Note that $(i,-i)$ is not acyclic, while $(i,-k)$ with $k\neq i$ is acyclic. 
    Applying Equations (\ref{eq:firstmoment}) and (\ref{eq:indicator_mean}), we see that on any conjugacy classes with no cycles of length $1$,
    \begin{equation*}
        \begin{split}
            \E_{\lambda,\mu}[\negative] & =-\frac{1}{(n-1)\cdot 2}\cdot \sum_{i\in [n],k\in [n]\setminus i} k \\
            & = -\frac{1}{(n-1)\cdot 2}\cdot \sum_{i\in [n]} \left(\binom{n+1}{2}-i \right)\\
            & =-\frac{1}{2}\binom{n+1}{2}.
        \end{split}
    \end{equation*}
    In general, Theorem~\ref{thm:IndependenceHigherMoments} shows that calculating the $k$th moment of a statistic on conjugacy classes without short cycles corresponds to calculating weighted sums of acyclic partial colored permutations.  
\end{ex}

\subsection{Order-invariant colored permutation statistics}

For some statistics $(X_n)_{n\geq 1}$ defined on $(\mathfrak{S}_{n,r})_{n\geq 1}$, the enumeration in Example~\ref{rem:negsum} can be used to study general properties of the $k$th moment of $(X_n)_{n\geq 1}$ on conjugacy classes without short cycles. For example, in some cases, this moment is a polynomial in $n$. We describe one family of statistics where this holds.

\begin{definition}
    Let $(K,\kappa)$ be a partial colored permutation on $\mathfrak{S}_{n,r}$ with $K=\{(i_h,j_h)\}_{h=1}^m$. The \textit{support} of $(K, \kappa)$ is the set of elements \[\text{supp}(K, \kappa) = \{ i_{1}, \ldots, i_{m}, j_{1}, \ldots, j_{m}\}.\] We emphasize that $\supp(K, \kappa)$ is a set and not a multiset. 
\end{definition}

\begin{definition}
Let $(K,\kappa)$ be a partial colored permutation on $\mathfrak{S}_{n,r}$ with $K=\{(i_h,j_h)\}_{h=1}^m$. For any order-preserving injection $f : \text{supp}(K, \kappa) \to [n]$, define $f(K,\kappa)\in \mathfrak{S}_{n,r}$ to be the partial colored permutation \[f(K,\kappa)=(\,\{(f(i_1),f(j_1)),\ldots,(f(i_m),f(j_m))\},\,\{(f(j_1),\kappa(j_1)),\ldots, (f(j_m),\kappa(k_m))\}\,).\]
\end{definition}

\begin{definition}
    A set of partial colored permutations $\mathcal{C}$ is \emph{order-invariant} if all partial colored permutations in $\mathcal{C}$ have the same size, and for any $(K,\kappa)\in \mathcal{C}$ and any order-preserving $f:\operatorname{supp}(K,\kappa)\to [n]$, we have $f(K,\kappa)\in \mathcal{C}$. A statistic $X$ is \emph{order-invariant} if it has the form $X=\sum_{(K,\kappa)\in \mathcal{C}} I_{(K,\kappa)}$ for some order-invariant $\mathcal{C}$. In this case, we say $X$ is \emph{induced} by $\mathcal{C}$.
\end{definition}

\begin{remark}
In previous work \cite{GRWCPermutationStatistics, GRWCFPSAC}, we referred to order-invariant extensions as \emph{symmetric extensions}.
\end{remark}


Many statistics on $\mathfrak{S}_{n,r}$ are naturally order-invariant. We give an example below in $B_n$.

\begin{ex}
    Consider the statistic $\inv$ on $B_n$, which can be expressed as
    \[\inv=\sum_{ \substack{i,j \in [n]
     \\ i<j}} \, \sum_{\substack{k,\ell\in [\pm n] \\ k<\ell}} \, I_{\{(i,\ell),(j,k)\}}.\]
    Each partial colored permutation  $\{(i,\ell),(j,k)\}$ above has size $2$, and we denote this set of partial colored permutations as $\mathcal{C}$. In the case that $k,\ell>0$, then for any order-preserving $f:\{i,j,k,\ell\}\to [n]$, we see that $\{(f(i),f(\ell)),(f(j),f(k))\}\in \mathcal{C}$. Note that the set $\{i,j,k,\ell\}$ need not consist of four distinct elements. In the case where $k<0$ and $\ell>0$, we see that for any order-preserving $f:\{i,j,|k|,\ell\}\to [n]$, we have $\{(f(i),f(\ell)),(f(j),-f(|k|))\}\in \mathcal{C}$. The same argument holds for the case $k,\ell<0$.
\end{ex}

Given an order-invariant statistic on a colored permutation group $\mathfrak{S}_{n_0,r}$, there is a natural way to extend this to $\mathfrak{S}_{n,r}$ for any $n$. We describe this process below. 

\begin{definition} \label{def:SymmetricExtensions}
    Fix $n_0\ge 2$. Let $X$ be an order-invariant statistic on $\mathfrak{S}_{n_0,r}$ induced by $\mathcal{C}$. The \emph{$r$-colored order-invariant extension} of $X$ (or $\mathcal{C}$) are the statistics  $X_n=\sum_{(K,\kappa)\in \mathcal{C}_n} I_{(K,\kappa)}$ on $\mathfrak{S}_{n,r}$ with $\mathcal{C}_n$ defined as follows:
    \begin{itemize}
        \item If $n\leq n_0$, then $\mathcal{C}_n$ contains all $(K,\kappa)\in \mathcal{C}$ with support contained in $[n]$.
        \item If $n\geq n_0$, then $\mathcal{C}_n$ is the set of all $f(K,\kappa)$ where $(K,\kappa)\in \mathcal{C}$ and  $f:[n_0]\to [n]$ is order-preserving.
    \end{itemize}
    Observe that by construction, each $X_n$ is order-invariant. We emphasize here that $r$ is kept constant.
\end{definition}

As noted in the Introduction, many statistics on   $\mathfrak{S}_n$ can be constructed in this manner, including permutation pattern statistics. The statistic $\inv$ on $B_n$ is another instance of an order-invariant extension, as shown in the next example. Other examples on general colored permutation groups include the number of excedances or fixed points, as defined in \cite{CM2012}. 

\begin{ex}\label{ex:inv_symmetric}
    Let $X$ be the inversion statistic $\inv$ on $B_4$, 
    which can be expressed as 
    \[X=\sum_{ \substack{i,j \in [4]
     \\ i<j}} \, \sum_{\substack{k,\ell\in [\pm 4] \\ k<\ell}} \, I_{\{(i,\ell),(j,k)\}}.\]
    Let $\mathcal{C}_{4}$ denote the partial colored permutations inducing this statistic. Applying Definition~\ref{def:SymmetricExtensions}, observe that $\{X_n\}_{n\geq 1}$ are the inversion statistics on $\{B_n\}_{n\geq 1}$.
\end{ex}

In general, order-invariant extensions satisfy the following polynomial property. 

\begin{lemma}\label{lemma:Polynomial}
Fix $r\ge 1$, and let $(X_{n})_{n\geq 1}$ be the $r$-colored order-invariant extension of an order-invariant set of partial colored permutations $\mathcal{C}$. Suppose $\mathcal{C}$ consists of partial colored permutations of size $m$. Then there exists a polynomial $p(n)$ of degree at most $m$ with the following property: for any $n\geq 1$ and conjugacy class $C_{\lambda_n}$ of $\mathfrak{S}_{n,r}$ with no cycles of length $1,2,\ldots,m$, we have $p(n) = \E_{\rpart_{n}}[X_{n}]$.
\end{lemma}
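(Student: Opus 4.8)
The plan is to apply the enumeration in Example~\ref{rem:negsum} together with Lemma~\ref{lem:HigherMomentsIndependence}, and then argue that the resulting count of acyclic partial colored permutations in $\mathcal{C}_n$ is polynomial in $n$. First I would fix a conjugacy class $C_{\rpart_n}$ of $\mathfrak{S}_{n,r}$ with no cycles of length $1,2,\ldots,m$. By linearity of expectation, $\E_{\rpart_n}[X_n] = \sum_{(K,\kappa)\in \mathcal{C}_n} \E_{\rpart_n}[I_{(K,\kappa)}]$, and by Lemma~\ref{lem:HigherMomentsIndependence} each summand is $0$ if $(K,\kappa)$ is not acyclic and $\frac{1}{(n-1)(n-2)\cdots(n-m)}\cdot \frac{1}{r^m}$ if it is acyclic. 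Since this nonzero value does not depend on which acyclic $(K,\kappa)$ we chose, we get
\[
\E_{\rpart_n}[X_n] = \frac{A(n)}{(n-1)(n-2)\cdots(n-m)}\cdot \frac{1}{r^m},
\]
where $A(n)$ denotes the number of acyclic partial colored permutations in $\mathcal{C}_n$. It then suffices to show that $A(n)$ is a polynomial in $n$ that is divisible by $(n-1)(n-2)\cdots(n-m)$, and that the resulting quotient has degree at most $m$; this last degree bound is immediate once $A(n)$ is shown to be a polynomial of degree at most $2m$ (in fact the quotient polynomial $p(n)$ will then have degree at most $m$).

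The main work is the polynomiality of $A(n)$. Here I would use the order-invariance of $\mathcal{C}$. By Definition~\ref{def:SymmetricExtensions}, for $n\ge n_0$ the set $\mathcal{C}_n$ consists precisely of the partial colored permutations $f(K,\kappa)$ where $(K,\kappa)\in\mathcal{C}$ and $f\colon [n_0]\to[n]$ is order-preserving. Since acyclicity of $f(K,\kappa)$ depends only on the graph structure of $K$ and not on the particular order-preserving injection $f$, and since $\mathcal{C}$ is closed under order-preserving injections, the acyclic elements of $\mathcal{C}_n$ are exactly the $f(K,\kappa)$ with $(K,\kappa)\in \mathcal{C}$ acyclic. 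Grouping by the ``order type'' of the support: each acyclic $(K,\kappa)$ has support of some size $s$ with $m\le s\le 2m$, and the number of order-preserving injections from an $s$-element set into $[n]$ is $\binom{n}{s}$. More carefully, I would partition $\mathcal{C}$ into classes under the equivalence ``same size and same relative order of support, same edge set after relabeling by order, same coloring after relabeling''; the distinct elements of $\mathcal{C}_n$ arising from one such class of acyclic templates with support size $s$ are in bijection with the $s$-subsets of $[n]$, giving $\binom{n}{s}$ of them. Summing $c_s \binom{n}{s}$ over $s$ from (roughly) $m$ to $2m$, where $c_s$ counts the relevant order-types, shows $A(n)$ is a polynomial in $n$ of degree at most $2m$ for $n\ge n_0$. (For $n< n_0$ the statement of the lemma only asserts the polynomial agrees on conjugacy classes without short cycles, and such classes force $n$ to be large enough that no cycles of length $\le m$ exist; in any case one checks the small-$n$ cases separately or notes that $\mathcal{C}_n$ for $n\le n_0$ is obtained by restricting supports, which again is counted by the same binomial expressions truncated appropriately.)

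The remaining point is that $(n-1)(n-2)\cdots(n-m)$ divides $A(n)$ as a polynomial. This follows because $\E_{\rpart_n}[X_n]$ is manifestly a rational number — indeed a nonnegative rational — for every admissible $n$, so the rational function $A(n)/\bigl((n-1)\cdots(n-m)\bigr)$ takes rational (finite) values at infinitely many integers $n$; since $A$ is a polynomial, the quotient must itself be a polynomial, say $q(n)$, and then $p(n) := q(n)/r^m$ is the desired polynomial with $p(n) = \E_{\rpart_n}[X_n]$. Its degree is at most $\deg A - m \le 2m - m = m$. I expect the main obstacle to be the bookkeeping in the second paragraph: precisely setting up the equivalence classes of templates in $\mathcal{C}$ so that the count of distinct elements of $\mathcal{C}_n$ (as opposed to pairs $(K,\kappa)$ with repetition) comes out to exactly a sum of binomial coefficients $\binom{n}{s}$. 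One must be careful that different order-preserving $f$ can yield the same element of $\mathcal{C}_n$, and that the support of $f(K,\kappa)$ is an $s$-subset determined by $f$, so that the correspondence with $s$-subsets is genuinely a bijection on each class. Once that is pinned down, everything else is routine.
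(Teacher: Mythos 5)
Your overall route is the same as the paper's: expand $X_n$ into indicator functions, apply Lemma~\ref{lem:HigherMomentsIndependence} so that only acyclic partial colored permutations contribute, and count the acyclic elements of $\mathcal{C}_n$ by grouping them according to the size $s$ of their support, each group contributing a multiple of $\binom{n}{s}$. The genuine gap is your final divisibility step. The claim that $(n-1)(n-2)\cdots(n-m)$ divides $A(n)$ as a polynomial ``because the quotient takes finite rational values at infinitely many integers'' is not a valid inference: any rational function with polynomial numerator and denominator takes finite rational values at all but finitely many integers, whether or not the denominator divides the numerator (consider $(n^2+1)/(n-1)$). You cannot conclude the quotient is a polynomial from this; that is precisely what needs to be proved.

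The repair is already latent in your second paragraph, and it is exactly how the paper argues. An acyclic $(K,\kappa)$ of size $m$ has a graph that is a disjoint union of directed paths with $m$ edges in total, so its support has size $s\ge m+1$ (not $s\ge m$ as you wrote); hence $A(n)=\sum_{s=m+1}^{2m}c_s\binom{n}{s}$ with every $s\ge m+1$. Since $\binom{n}{s}=\frac{n(n-1)\cdots(n-s+1)}{s!}$ already contains the factors $(n-1),\dots,(n-m)$ whenever $s\ge m+1$, each term $c_s\binom{n}{s}/\bigl((n-1)\cdots(n-m)\bigr)$ equals $\frac{c_s}{s!}\,n(n-m-1)\cdots(n-s+1)$, a genuine polynomial of degree $s-m\le m$, with no appeal to values at integers. (This also disposes of small $n$: for $m+1\le n<s$ the displayed product vanishes, matching the fact that $\mathcal{C}_n$ then contains no such elements, and for $n\le m$ the lemma is vacuous since no conjugacy class avoids cycles of length at most $m$.) With that correction your argument coincides with the paper's proof.
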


\begin{proof}
    Letting $\mathcal{C}_n$ denote the set of partial colored permutations that induce $X_n$, partition $\mathcal{C}_n = \sqcup_{s=m+1}^{2m} \mathcal{C}_{n,s}$ where each $\mathcal{C}_{n,s}$ contains the partial colored permutations $(K,\kappa)\in \mathcal{C}_n$ with $|\text{supp}(K, \kappa)|=s$. Using this, we express
    \[X_n=\sum_{s=m+1}^{2m}\sum_{(K,\kappa)\in \mathcal{C}_{n,s}} I_{(K,\kappa)}.\]
    Defining $\mathcal{A}_{n,s}\subseteq \mathcal{C}_{n,s}$ to be the subset of acyclic partial colored permutations, Theorem \ref{thm:IndependenceHigherMoments} implies
    \begin{equation}\label{eq:orderinvariantmean}
        \E_{\rpart_n}[X_n]=\sum_{s=m+1}^{2m} \frac{|\mathcal{A}_{n,s}|}{(n-1)(n-2)\dots (n-m)}.
    \end{equation}

    We now turn our attention to $|\mathcal{A}_{n,s}|$. When $n<s$, the construction of $(X_n)_{n\geq 1}$ implies that $\mathcal{A}_{n,s}=\emptyset$, so $|\mathcal{A}_{n,s}|=0$. When $n\geq s$, we can express 
    \[\mathcal{A}_{n,s}=\bigsqcup_{\substack{f:[s]\to [n] \\ \text{ order-preserving}}} \{f(K,\kappa)\mid (K,\kappa)\in \mathcal{A}_{s,s} \}.\]
    Consequently, its cardinality is given by 
    \[|\mathcal{A}_{n,s}|={n\choose s}|\mathcal{A}_{s,s}|.\]
    In this case, each summand in \eqref{eq:orderinvariantmean} can be expressed as
    \begin{equation}\label{eq:orderinvariantmean2}
        \begin{split}
            \frac{|\mathcal{A}_{n,s}|}{(n-1)(n-2)\dots (n-m)r^m}  =  \frac{|\mathcal{A}_{s,s}|}{s!r^m} \cdot n(n-m-1)\cdots (n-s+1).
        \end{split}
    \end{equation}
    Observe that when $n<s$ is substituted, the right side of \eqref{eq:orderinvariantmean2} vanishes. Consequently, \eqref{eq:orderinvariantmean2} actually holds for all $n$. Combining Equations~\eqref{eq:orderinvariantmean} and \eqref{eq:orderinvariantmean2}, we find that 
    \begin{equation}\label{eq:orderinvariantmean3}
        \E_{\rpart_n}[X_n]=\sum_{s=m+1}^{2m} \frac{|\mathcal{A}_{s,s}|}{s!r^m} \cdot n(n-m-1)\cdots (n-s+1).
    \end{equation}
    This holds for all $\rpart_n$ without cycles of length $1,2,\ldots,m$, so this is the claimed polynomial $p(n)$. Note that each summand in \eqref{eq:orderinvariantmean3} has degree at most $s-m\leq m$, so $p(n)$ has degree at most $m$. 
\end{proof}

\begin{remark}
    In the proof of Lemma~\ref{lemma:Polynomial}, one can calculate $|\mathcal{A}_{s,s}|$ for all $s$ to obtain an explicit formula for $p(n)$. Alternatively, one can use the degree of $p(n)$ with polynomial interpolation to obtain an explicit formula. The practicality of these methods depends on the order-invariant extension under consideration.
\end{remark}

Our result Theorem~\ref{thm:Polynomiality} holds for arbitrary moments, while  Lemma~\ref{lemma:Polynomial} only includes the first moment. To obtain the result for any moment, we require additional properties involving the general class of order-invariant extensions. We describe these now.

\begin{definition}
    Define $\mathcal{I}_{r}$ to be the (real) vector space generated by $r$-colored order-invariant extensions with addition and scalar multiplication defined component-wise. In other words,
    \begin{itemize}
        \item for any order-invariant extensions $(X_n)_{n \geq 1}$ and $(Y_n)_{n\geq 1}$, define $(X_n)_{n \geq 1}+(Y_n)_{n\geq 1}=(X_n+Y_n)_{n\geq 1},$ and
        \item for any order-invariant extension $(X_n)_{n \geq 1}$ and constant $c\in \mathbb{R}$, define $c\cdot (X_n)_{n\geq 1}=(c\cdot X_n)_{n\geq 1}$.
    \end{itemize}
    Furthermore, define $\mathcal{I}_{r,m}$ to be the subspace generated by order-invariant extensions of any $\mathcal{C}$ consisting of partial colored permutations with size at most $m$.
\end{definition}

\begin{lemma}\label{lem:algebra}
    Let $(X_n)_{n\geq 1}$ and $(Y_n)_{n\geq 1}$ be the order-invariant extensions of some order-invariant statistics. Then $(X_nY_n)_{n\geq 1}\in \mathcal{I}_r$. In particular, if $(X_n)_{n\geq 1}\in \mathcal{I}_{r,m_1}$ and $(Y_n)_{n\geq 1}\in \mathcal{I}_{r,m_2}$, then $(X_nY_n)_{n\geq 1}\in \mathcal{I}_{r,m_1+m_2}$.
\end{lemma}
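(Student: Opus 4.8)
The plan is to reduce the multiplicativity statement to the combinatorial structure of order-invariant extensions, using \Lem{lem:HRLem4.2} to control products of indicator functions. First I would write $X_n = \sum_{(K,\kappa)\in\mathcal{C}_n} I_{(K,\kappa)}$ and $Y_n = \sum_{(K',\kappa')\in\mathcal{D}_n} I_{(K',\kappa')}$, where $\mathcal{C}_n$ and $\mathcal{D}_n$ are the sets of partial colored permutations of sizes $m_1$ and $m_2$ respectively that induce $X_n$ and $Y_n$. Then $X_nY_n = \sum_{(K,\kappa),(K',\kappa')} I_{(K,\kappa)}\cdot I_{(K',\kappa')}$, and by \Lem{lem:HRLem4.2} each term is either identically zero (if the two partial colored permutations are incompatible) or equals $I_{(K\cup K',\kappa\cup\kappa')}$, a single indicator of a partial colored permutation of size at most $m_1+m_2$.

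Next I would argue that the resulting collection $\mathcal{E}_n := \{(K\cup K',\kappa\cup\kappa') : (K,\kappa)\in\mathcal{C}_n,\ (K',\kappa')\in\mathcal{D}_n \text{ compatible}\}$, taken as a multiset (so that each compatible pair contributes one copy), is built out of order-invariant pieces. The key observation is that compatibility of $(K,\kappa)$ and $(K',\kappa')$ depends only on the order type of $\supp(K,\kappa)\cup\supp(K',\kappa')$ together with the pattern of which elements coincide, and likewise the isomorphism type of the merged partial colored permutation is determined by this combinatorial data; since $\mathcal{C}_n$ and $\mathcal{D}_n$ are each closed under order-preserving injections, so is the multiset $\mathcal{E}_n$. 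Hence $X_nY_n$ can be written as a sum, over the finitely many ``merge patterns'' that can occur for large $n$, of order-invariant statistics $(Z_n^{(t)})_{n\geq 1}$, each of size at most $m_1+m_2$, possibly with positive integer multiplicities. Concretely, one fixes $n_0$ large enough (e.g.\ $n_0 \ge 2(m_1+m_2)$ so every merge pattern that ever occurs already occurs on $\mathfrak{S}_{n_0,r}$), describes each $Z_n^{(t)}$ as the $r$-colored order-invariant extension of its restriction to $\mathfrak{S}_{n_0,r}$ in the sense of \Def{def:SymmetricExtensions}, and checks that $\sum_t Z_n^{(t)}$ agrees with $X_nY_n$ for all $n$. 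This gives $(X_nY_n)_{n\geq1}\in\mathcal{I}_{r,m_1+m_2}\subseteq\mathcal{I}_r$. The bilinear extension to all of $\mathcal{I}_{r,m_1}\times\mathcal{I}_{r,m_2}$ then follows since the product is bilinear and $\mathcal{I}_{r,m_1+m_2}$ is a subspace.

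The main obstacle I anticipate is the bookkeeping in the claim that the merged multiset $\mathcal{E}_n$ is genuinely order-invariant and compatibly extendible across all $n$, not just for a single $n$. Two subtleties need care: first, the size of a merged partial colored permutation can be strictly less than $m_1+m_2$ when the supports overlap, so $\mathcal{E}_n$ decomposes into order-invariant families of several different sizes $s \le m_1+m_2$ — this is why the statement only claims membership in $\mathcal{I}_{r,m_1+m_2}$ rather than a pure size-$(m_1+m_2)$ extension, and the definition of $\mathcal{I}_{r,m}$ as spanned by extensions of size \emph{at most} $m$ is exactly what absorbs this. Second, one must verify that for $n$ below the threshold the truncation behavior of \Def{def:SymmetricExtensions} (keeping only $(K,\kappa)$ with support in $[n]$) matches on both sides; this is a direct consequence of the fact that $X_n$ and $Y_n$ are themselves defined by the same truncation rule, so the product inherits it. I would present the merge-pattern argument carefully but leave the routine verification that each $Z_n^{(t)}$ is an honest order-invariant extension — i.e.\ closed under order-preserving injections and reproduced by the extension procedure — as a straightforward check following directly from the definitions.
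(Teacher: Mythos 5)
Your proposal is correct and follows essentially the same route as the paper: expand $X_nY_n$ via Lemma~\ref{lem:HRLem4.2} into indicators of merged partial colored permutations, group the compatible pairs by the size and support-size of the merge (your ``merge patterns'' are the paper's partition of $\mathcal{C}_n$ into the sets $\mathcal{C}_{n,t,s}$ and then into orbits under order-preserving injections), and recognize each group as an order-invariant extension of its restriction to $\mathfrak{S}_{s,r}$, where the support is all of $[s]$. The two subtleties you flag --- merged sizes strictly below $m_1+m_2$ being absorbed by the ``at most'' convention in $\mathcal{I}_{r,m}$, and the truncation behavior for small $n$ --- are exactly the points the paper's proof handles via its decomposition of $\mathcal{C}_{n,t,s}$.
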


\begin{proof}
    Assume $(X_n)_{n\geq 1}$ is the order-invariant extension of some $\mathcal{C}_{1}$ and $(Y_n)_{n\geq 1}$ is the order-invariant extension of some $\mathcal{C}_{2}$. We let $m_1$ and $m_2$ be the sizes of the partial colored permutations in $\mathcal{C}_1$ and $\mathcal{C}_2$, respectively. Additionally, let $\mathcal{C}_{n,1}$ and $\mathcal{C}_{n,2}$ be the partial colored permutations that induce $X_n$ and $Y_n$, respectively. We can express the product
    \[X_nY_n=\sum_{(K_1,\kappa_1)\in \mathcal{C}_1}\sum_{(K_2,\kappa_2)\in \mathcal{C}_2} I_{(K_1,\kappa_1)}I_{(K_2,\kappa_2)}.\]
    By Lemma~\ref{lem:HRLem4.2}, it suffices to consider when $(K_1,\kappa_1)$ and $(K_2,\kappa_2)$ are compatible. Consequently, define 
    \[\mathcal{C}_n=\{ ((K_1,\kappa_1),(K_2,\kappa_2)) \mid  (K_1,\kappa_1)\in \mathcal{C}_{n,1} \text{ and } (K_2,\kappa_2)\in \mathcal{C}_{n,2} \text{ are compatible}\}.\]
    Letting $\mathcal{C}_{n,t,s}\subseteq \mathcal{C}_n$ be the subset consisting of pairs such that $(K_1\cup K_2,\kappa_1\cup \kappa_2)$ is a partial colored permutation on $\mathfrak{S}_{n,r}$ with size $t$ and support of size $s$, this results in a partition of $\mathcal{C}_n$ into
    \[\mathcal{C}_n=\bigsqcup_{t=\max(m_1,m_2)}^{m_1+m_2} \bigsqcup_{s=t+1}^{2t} \mathcal{C}_{n,t,s}.\]
    By defining the statistic
    \begin{equation}\label{eq:component}
        Z_{n,t,s}=\sum_{((K_1,\kappa_1),(K_2,\kappa_2))\in \mathcal{C}_{n,t,s}} I_{(K_1\cup K_2,\kappa_1\cup \kappa_2)},
    \end{equation}
    we can decompose 
    \begin{equation}\label{eq:XYsummation}
        (X_nY_n)_{n\geq 1} =\sum_{t=\max(m_1,m_2)}^{m_1+m_2} \sum_{s=t+1}^{2t} (Z_{n,t,s})_{n\geq 1}
    \end{equation}
    where the summation on the right-hand side is component-wise. It now suffices to show that $(Z_{n,t,s})_{n\geq 1}\in \mathcal{I}_{r,m_1+m_2}$. 

    To show this, we decompose
    \begin{equation}\label{eq:Cnts}
        \mathcal{C}_{n,t,s}=\bigsqcup_{\substack{f:[s]\to [n] \\ \text{ order-preserving}}} \{(f(K_1,\kappa_1),f(K_2,\kappa_2))\mid ((K_1,\kappa_1),(K_2,\kappa_2))\in \mathcal{C}_{s,t,s}\}.
    \end{equation}
    Note that when $n<s$, the right hand side should be interpreted as the empty union, which results in $\emptyset$. Focusing on the set $\mathcal{C}_{s,t,s}$, observe that the support of  $(K_1\cup K_2,\kappa_1\cup \kappa_2)$ for each $((K_1,\kappa_1),(K_2,\kappa_2))\in \mathcal{C}_{s,t,s}$ is all of $[s]$. Consequently, this single element forms an order-invariant set of partial colored permutations on $\mathfrak{S}_{n,r}$, and the statistic $I_{(K_1\cup K_2,\kappa_1\cup \kappa_2)}$ on $\mathfrak{S}_{s,r}$ is order-invariant. Denoting its order-invariant extension as $(I_{n,(K_1\cup K_2,\kappa_1\cup \kappa_2)})_{n\geq 1}$, we can combine this with Equations \eqref{eq:Cnts} and \eqref{eq:component} to find
    \[(Z_{n,t,s})_{n\geq 1} =\sum_{((K_1,\kappa_1),(K_2,\kappa_2))\in \mathcal{C}_{s,t,s}} (I_{n,(K_1\cup K_2,\kappa_1\cup \kappa_2)})_{n\geq 1}.\]
    Thus, we see that $(Z_{n,t,s})_{n\geq 1}\in \mathcal{I}_{r,t}\subseteq \mathcal{I}_{r,m_1+m_2}$. The result $(X_nY_n)_{n\geq 1}\in \mathcal{I}_{r,m_1+m_2}$ now follows from \eqref{eq:XYsummation}.
\end{proof}

\begin{corollary}\label{cor:algebra}
    The vector space $\mathcal{I}_r$ has the structure of an $\mathbb{R}$-algebra with multiplication defined by $(X_n)_{n\geq 1}\cdot (Y_n)_{n\geq 1}=(X_nY_n)_{n\geq 1}$. Furthermore, $\mathcal{I}_{r,m_1}\cdot \mathcal{I}_{r,m_2}\subseteq \mathcal{I}_{r,m_1+m_2}$.
\end{corollary}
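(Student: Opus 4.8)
The plan is to realize $\mathcal{I}_{r}$ as a subalgebra of the ambient space $\mathcal{F}_{r} = \prod_{n\geq 1}\mathbb{R}^{\mathfrak{S}_{n,r}}$ of all sequences $(Z_{n})_{n\geq 1}$ with $Z_{n}\colon\mathfrak{S}_{n,r}\to\mathbb{R}$. For each fixed $n$, the set $\mathbb{R}^{\mathfrak{S}_{n,r}}$ of real-valued functions is a commutative unital $\mathbb{R}$-algebra under pointwise operations, so $\mathcal{F}_{r}$ is a commutative unital $\mathbb{R}$-algebra under componentwise addition, scalar multiplication, and multiplication $(Z_{n})_{n\geq 1}\cdot(W_{n})_{n\geq 1}=(Z_{n}W_{n})_{n\geq 1}$ — every algebra axiom (associativity, commutativity, bilinearity, compatibility with scalars, existence of the identity $(\mathbf{1}_{n})_{n\geq 1}$) holds because it holds in each factor. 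I would first observe that the multiplication named in the statement is precisely the restriction of this ambient product to $\mathcal{I}_{r}$; consequently it is automatically bilinear and, crucially, well-defined — there is no dependence on how an element of $\mathcal{I}_{r}$ is written as a linear combination of order-invariant extensions, since the product $(X_{n}Y_{n})_{n\geq 1}$ depends only on the functions $X_{n},Y_{n}$ and not on any chosen decomposition.

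Given this, the one thing left to verify is that $\mathcal{I}_{r}$ is closed under the ambient product. Closure under addition and scalar multiplication is immediate from the definition of $\mathcal{I}_{r}$ as a vector space. For multiplicative closure, I would write arbitrary elements as finite $\mathbb{R}$-linear combinations $\mathbf{X}=\sum_{a}c_{a}(X^{(a)}_{n})_{n\geq 1}$ and $\mathbf{Y}=\sum_{b}d_{b}(Y^{(b)}_{n})_{n\geq 1}$ of $r$-colored order-invariant extensions, and expand $\mathbf{X}\cdot\mathbf{Y}=\sum_{a,b}c_{a}d_{b}\,(X^{(a)}_{n}Y^{(b)}_{n})_{n\geq 1}$ by bilinearity. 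By Lemma~\ref{lem:algebra}, each $(X^{(a)}_{n}Y^{(b)}_{n})_{n\geq 1}$ lies in $\mathcal{I}_{r}$; since $\mathcal{I}_{r}$ is a vector space, so does the displayed linear combination, giving $\mathbf{X}\cdot\mathbf{Y}\in\mathcal{I}_{r}$. Thus $\mathcal{I}_{r}$ is a subalgebra of $\mathcal{F}_{r}$, proving the first assertion. (If one wants a unital subalgebra, note that the constant sequence $1$ is itself an order-invariant extension, namely of the empty partial colored permutation of size $0$, so $\mathcal{I}_{r}$ contains the identity as well — though the statement does not require this.) For the graded refinement, I would run the same bilinear expansion with $\mathbf{X}\in\mathcal{I}_{r,m_{1}}$ and $\mathbf{Y}\in\mathcal{I}_{r,m_{2}}$, so that each $(X^{(a)}_{n})_{n\geq 1}\in\mathcal{I}_{r,m_{1}}$ and each $(Y^{(b)}_{n})_{n\geq 1}\in\mathcal{I}_{r,m_{2}}$; the second sentence of Lemma~\ref{lem:algebra} puts each product term in $\mathcal{I}_{r,m_{1}+m_{2}}$, and taking linear combinations inside the vector space $\mathcal{I}_{r,m_{1}+m_{2}}$ yields $\mathcal{I}_{r,m_{1}}\cdot\mathcal{I}_{r,m_{2}}\subseteq\mathcal{I}_{r,m_{1}+m_{2}}$.

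I do not expect a real obstacle here: all the genuine combinatorial content — that a componentwise product of two order-invariant extensions is again a sum of order-invariant extensions, with the expected degree bound — is already packaged in Lemma~\ref{lem:algebra}, and the corollary is the routine step of upgrading a statement about generators to a statement about the spans. The only point deserving explicit care is the well-definedness noted above, since the order-invariant extensions spanning $\mathcal{I}_{r}$ need not be linearly independent; passing through the ambient algebra $\mathcal{F}_{r}$ is the cleanest way to dispatch it, after which everything reduces to the vector-space closure of $\mathcal{I}_{r}$ together with Lemma~\ref{lem:algebra}.
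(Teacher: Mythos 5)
Your proposal is correct and matches the paper's (implicit) argument: the paper states this corollary without proof, treating it as an immediate consequence of Lemma~\ref{lem:algebra} via bilinear expansion over the generating order-invariant extensions, which is exactly what you carry out. Your extra step of realizing $\mathcal{I}_r$ as a subalgebra of the ambient algebra of all function sequences to dispatch well-definedness is a harmless (and reasonable) formalization of the same routine verification.
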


Using the preceding results, we can now establish a more general case of Theorem~\ref{thm:Polynomiality}.

\begin{theorem}\label{thm:HigherMomentPolynomial}
Fix $r\ge 1$, and let $(X_{n})_{n\geq 1}\in \mathcal{I}_{r,m}$. For each $k\geq 1$, there exists a polynomial $p(n)$ of degree at most $mk$ with the following property: for any $n\geq 1$ and conjugacy class $C_{\lambda_n}$ of $\mathfrak{S}_{n,r}$ with no cycles of length $1,2,\ldots,mk$, we have $p(n) = \E_{\rpart_{n}}[X_{n}^k]$.
\end{theorem}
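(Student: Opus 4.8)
The plan is to reduce the $k$th-moment statement to the first-moment statement already established in Lemma~\ref{lemma:Polynomial}, using the algebra structure of $\mathcal{I}_r$ from Corollary~\ref{cor:algebra}. First, since $(X_n)_{n\geq 1}\in\mathcal{I}_{r,m}$, iterating the inclusion $\mathcal{I}_{r,m_1}\cdot\mathcal{I}_{r,m_2}\subseteq\mathcal{I}_{r,m_1+m_2}$ from Corollary~\ref{cor:algebra} $k-1$ times shows that the componentwise power $(X_n^k)_{n\geq 1}=(X_n)_{n\geq 1}^k$ lies in $\mathcal{I}_{r,mk}$.

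Next I would unwind the definition of $\mathcal{I}_{r,mk}$. By construction it is the (real) span of $r$-colored order-invariant extensions of order-invariant sets $\mathcal{C}$ of partial colored permutations of size at most $mk$; and since an order-invariant set has all of its members of a single common size, each such generator is the order-invariant extension of a set of a fixed size. Hence we may write $(X_n^k)_{n\geq 1}=\sum_{i=1}^{N}c_i\,(Y_n^{(i)})_{n\geq 1}$, where $c_i\in\mathbb{R}$ and each $(Y_n^{(i)})_{n\geq 1}$ is the $r$-colored order-invariant extension of an order-invariant set $\mathcal{C}^{(i)}$ whose members all have size $m_i\le mk$. Applying linearity of expectation gives $\E_{\rpart_n}[X_n^k]=\sum_{i=1}^{N}c_i\,\E_{\rpart_n}[Y_n^{(i)}]$ for every conjugacy class $C_{\rpart_n}$. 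By Lemma~\ref{lemma:Polynomial}, for each $i$ there is a polynomial $p_i(n)$ of degree at most $m_i$ such that $p_i(n)=\E_{\rpart_n}[Y_n^{(i)}]$ whenever $C_{\rpart_n}$ has no cycles of length $1,2,\ldots,m_i$. Since $m_i\le mk$, the hypothesis that $C_{\rpart_n}$ has no cycles of length $1,2,\ldots,mk$ implies all of these simultaneously, so on such conjugacy classes $\E_{\rpart_n}[X_n^k]=\sum_i c_i p_i(n)=:p(n)$, and $\deg p\le\max_i m_i\le mk$. (When $mk\ge n$ there is no conjugacy class of $\mathfrak{S}_{n,r}$ avoiding all cycle lengths $1,\ldots,mk$, so the asserted equality is vacuous for such $n$ and places no constraint on $p$; $p$ remains a single polynomial valid across all relevant $n$.)

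I do not expect a genuine obstacle here: the substantive work has already been done, namely the closure of order-invariant extensions under products in Lemma~\ref{lem:algebra}/Corollary~\ref{cor:algebra} (which in turn rests on the compatibility analysis of Lemma~\ref{lem:HRLem4.2}) and the explicit acyclic-enumeration argument behind Lemma~\ref{lemma:Polynomial} (via Theorem~\ref{thm:IndependenceHigherMoments}). The one point requiring care is the bookkeeping: confirming that the decomposition of an element of $\mathcal{I}_{r,mk}$ into order-invariant-extension generators indeed yields generators of size $m_i\le mk$, and that the single hypothesis ``no cycles of length $1,\ldots,mk$'' dominates the per-summand hypotheses so that all the polynomials $p_i$ apply on the same set of conjugacy classes. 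If desired, one can additionally extract an explicit formula for $p(n)$ by combining the explicit expression~\eqref{eq:orderinvariantmean3} for each $p_i$ with the product decomposition in the proof of Lemma~\ref{lem:algebra}, though polynomial interpolation using the degree bound $mk$ is typically more convenient.
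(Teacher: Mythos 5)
Your proposal is correct and follows essentially the same route as the paper: express $(X_n^k)_{n\geq 1}$ as a linear combination of order-invariant extensions in $\mathcal{I}_{r,mk}$ via Corollary~\ref{cor:algebra}, apply linearity of expectation, and invoke Lemma~\ref{lemma:Polynomial} on each summand. The extra bookkeeping you flag (each generator has a single fixed size $m_i\le mk$, and the hypothesis of no cycles of length $1,\ldots,mk$ subsumes each per-summand hypothesis) is implicit in the paper's proof and correctly handled in yours.
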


\begin{proof}
    By Corollary~\ref{cor:algebra}, we see that $(X_n^k)_{n\geq 1}$ can be expressed as a linear combination of order-invariant extensions
    \[(X_n^k)_{n\geq 1}=c_1\cdot (Y_{n,1})_{n\geq 1} + c_2\cdot (Y_{n,2})_{n\geq 1}+\ldots + c_j \cdot (Y_{n,j})_{n\geq 1}\]
    where each $(Y_{n,i})_{n\geq 1}\in \mathcal{I}_{n,mk}$. Consequently, we have that for all $n$,
    \[\E_{\rpart_n}[X_n^k]=c_1 \cdot \E_{\rpart_n}[Y_{n,1}]+c_2 \cdot \E_{\rpart_n}[Y_{n,2}]+\ldots +c_j \cdot \E_{\rpart_n}[Y_{n,j}].\]
    Applying Lemma~\ref{lemma:Polynomial} to each term on the right side implies that this is given by a polynomial in $n$ with degree at most $mk$ whenever $\rpart_n$ does not contain cycles of length $1,2,\ldots,mk$. 
\end{proof}

\begin{remark}
The proofs in this section bear some similarities with the methods used by Chern, Diaconis, Kane, and Rhoades \cite{CDKR} in their study of statistics on set partitions. For example, see the proof of \cite[Lemma 7]{CDKR}, which also analyzes terms in a summation and shows that certain terms must be polynomials in the appropriate variables. 
\end{remark}

Note that one can potentially show the polynomiality property from Theorem~\ref{thm:HigherMomentPolynomial} for other statistics that are not order-invariant extensions. The key requirement is that the weights for the various $I_K$ behave in a way that allows us to divide by the denominators resulting from applying Lemma~\ref{lem:HigherMomentsIndependence}.

\section{Inversion and descent statistics in $B_n$}\label{Bnstatistics}

In this section, we examine the inversion and descent statistics in the hyperoctahedral group, which have degree at most $2$. Our results in the previous section imply that we expect the mean number of descents and inversions on a conjugacy class $C_{\lambda,\mu}$ to depend on $n$ and the number of cycles of length $1$ or $2$ in $\lambda$ and $\mu$. However, in deriving our explicit formulas, we will find that a much stronger statement is true: the means of these statistics only depend on $n$ and differences of the form $m_1(\lambda)^k-m_1(\mu)^k$ for $k\in \mathbb{N}$. In particular, we will see that the converse of Theorem \ref{thm:IndependenceHigherMoments} does not hold. 

Throughout this section, we will fix a hyperoctahedral group $B_n$. We will also fix an arbitrary conjugacy class $C_{\lambda,\mu}$, which is indexed by a bi-partition $(\lambda,\mu)$ of $n$.

\subsection{Inversion indicator functions}

In this subsection, we focus on the indicator function $\inv_{ij}$ for $(i,j)$ being an inversion, where $i<j$ are elements in $[n]$. Each $\inv_{ij}$ can be expressed using partial colored permutations of size $2$, and $\inv$ can be expressed in terms of $\inv_{ij}$ as follows:
\begin{equation}\label{eq:size2decomposition}
    \inv_{ij}=\sum_{\substack{k,\ell\in [\pm n]\\k<\ell} }I_{\{(i,\ell),(j,k)\}} \quad \text{ and }\quad \inv=\sum_{\substack{i,j\in [n]\\i<j} } \inv_{ij}.
\end{equation}
To determine $\E_{\lambda,\mu}[\inv]$, it suffices to compute $\E_{\lambda,\mu}[\inv_{ij}]$ over appropriate $i$ and $j$.

We start with some lemmas that will be used repeatedly throughout our work. We first make an observation about symmetry in the set of conjugacy classes of $B_n$ and then establish some results related to the size of various subsets in $C_{\lambda,\mu}$. Recall from Definition~\ref{def:doublezee} that $z_{\lambda, \mu}=2^{\ell(\lambda)} z_\lambda 2^{\ell(\mu)} z_\mu$.

\begin{lemma}\label{lem:symm-Bnconjugacyclasses} There is a bijection $\phi:C_{\lambda,\mu}\rightarrow C_{\mu, \lambda}$, which switches even and odd cycles. More generally, in the colored permutation group $\Sym_{n,r}$, let $C_{\rpart}$ be the conjugacy class indexed by the $r$-partition $\rpart=(\lambda^{0}, \lambda^{1}, \ldots, \lambda^{r-1})$. For any pair $i, j$, let $\rpart(i,j) $ be the $r$-partition obtained from $\rpart$ by switching the $i$th and $j$th partitions in $\rpart$. Then there is a bijection  $\phi_{i,j}:C_{\rpart}\rightarrow  C_{\rpart(i,j)}$.
\end{lemma}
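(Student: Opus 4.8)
\textbf{Proof plan for Lemma~\ref{lem:symm-Bnconjugacyclasses}.}

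The plan is to prove the general statement for $\Sym_{n,r}$ directly, since the bipartite case $B_n \cong \Sym_{n,2}$ is the special case $r=2$, $(i,j)=(0,1)$. The natural approach is to construct $\phi_{i,j}$ as conjugation by a cleverly chosen element of $\Sym_{n,r}$ (or at worst an outer bijection built from the wreath-product structure), and then observe that conjugation is automatically a bijection, so the only real content is checking that it carries $C_{\rpart}$ into $C_{\rpart(i,j)}$. First I would recall the concrete description from the preliminaries: a colored permutation $(\omega,\tau)$ is a bijection $f\colon [n]^r \to [n]^r$ with $f(x^c) = (\text{something})^{c + (\text{shift})}$, and the cycle type records, for each cycle of the underlying $\omega$, its length together with its color $c \in \mathbb{Z}_r$ (the sum of the colors encountered along the cycle). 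Conjugating by a fixed-point-free ``color shift'' — i.e.\ an element whose underlying permutation is the identity but whose coloring is a constant function $x \mapsto a$ for a suitable $a \in \mathbb{Z}_r$ — changes the color of \emph{every} cycle of length $k$ by $ka \pmod r$, which is not quite what we want because it acts uniformly, not by the transposition $(i\;j)$ on the color labels. So a single conjugation will not do in general.

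The cleaner route, which I expect to be the actual argument, is to build $\phi_{i,j}$ combinatorially on cycle notation rather than via conjugation. Given $(\omega,\tau) \in C_{\rpart}$, write it in cycle notation; each cycle carries a color $c$, and the multiset of (length, color) pairs is exactly $\rpart$. Now define $\phi_{i,j}(\omega,\tau)$ by going through the cycles: leave every cycle of color $\neq i,j$ untouched, and for each cycle of color $i$ (resp.\ $j$), modify the coloring of a single designated entry of that cycle — say the entry whose underlying value is smallest — by adding $j - i$ (resp.\ $i - j$) modulo $r$. This changes that cycle's color from $i$ to $j$ (resp.\ $j$ to $i$) while preserving its length and preserving the underlying permutation $\omega$ entirely. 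Hence the output has cycle type $\rpart(i,j)$, so it lands in $C_{\rpart(i,j)}$. The map is invertible by the same recipe with $i$ and $j$ swapped — crucially, since $\omega$ is unchanged, the ``smallest entry in the cycle'' is well-defined and the same before and after, so applying the construction twice returns the original coloring. This gives a genuine bijection $C_{\rpart} \to C_{\rpart(i,j)}$. For the $B_n$ phrasing I would then just note that $r=2$, so ``adding $j-i = 1$ to one entry'' means flipping the sign of one entry of each even cycle and each odd cycle appropriately, which is exactly ``switching even and odd cycles.''

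The main obstacle — really a bookkeeping point rather than a deep difficulty — is making the designated-entry choice canonical enough that invertibility is transparent: one must pin down a rule (smallest underlying value in the cycle, say) that depends only on $\omega$ and the cycle, not on $\tau$, so that the inverse map uses the same designated entry. One should also double-check the degenerate cases: cycles of length $1$ (a single fixed point $i^0 \mapsto i^c$, whose color is just $c$; adding $j-i$ to its unique entry still works), and the case $i=j$ (where $\phi_{i,i}$ is the identity and $\rpart(i,i) = \rpart$, so there is nothing to prove). Everything else is immediate: $\phi_{i,j}$ is well-defined because the modified coloring is still a valid coloring $[n] \to \mathbb{Z}_r$, it preserves the length of each cycle and alters colors exactly as prescribed, and $\phi_{j,i}$ is a two-sided inverse. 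I would present this in a few lines, emphasizing that no class-size computation is needed — the bijection is explicit — though one could alternatively remark that Lemma~\ref{lem:classsize} (and its $\Sym_{n,r}$ analogue via $z_{\rpart}$) already shows $|C_{\rpart}| = |C_{\rpart(i,j)}|$, giving existence of \emph{some} bijection for free, with the explicit $\phi_{i,j}$ being the useful refinement.
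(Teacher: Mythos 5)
Your proposal is correct and matches the paper's argument essentially verbatim: the paper also defines $\phi_{i,j}$ by adding $j-i$ (mod $r$) to the color of the smallest element in each cycle of color $i$ and $i-j$ to that of each cycle of color $j$, leaving the underlying permutation fixed so that the inverse is $\phi_{j,i}$, and in the $B_n$ case this specializes to negating the smallest element (in absolute value) of each cycle. No discrepancies to report.
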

\begin{proof} 
One such bijection $\phi$ is obtained by simply negating  the smallest element (in absolute value) in each cycle. In Example~\ref{ex:basic1}, we have 
$(2,7,6,3,-1)(-5,8,-4)\mapsto (2,7,6,3,\mathbf{+1})(-5,8,\mathbf{+4})$, and in Example~\ref{ex:basic2}, 
$(2,7,-6,-1)(3)(-4)(8,-5)\mapsto (2,7,-6,\mathbf{+1})(\mathbf{-3})(\mathbf{+4})(8,\mathbf{+5})$. Clearly $\phi$ changes the parity of each cycle by definition, and $\phi^{-1}=\phi$.

For $\Sym_{n,r}$, given an $r$-partition $\rpart$ and fixed colors $i$ and $j$, one can more generally define the map $\phi_{i,j}$ that adds $j-i$ (modulo $r$) to the color of the smallest element, in $[n]$, in each cycle of color $i$, and that adds $i-j$ (modulo $r$) to the color of the smallest element, in $[n]$, in each cycle of color $j$.  This interchanges cycles of color $i$ with cycles of color $j$. Clearly the inverse map of $\phi_{i,j}$ is $\phi_{j,i}$, finishing the proof.
\end{proof}

\begin{lemma}\label{1cycles}
For fixed, distinct $i,j\in [n]$, the following hold:
\begin{enumerate}
    \item $\pr_{\lambda,\mu}[\omega(i)=i]=\frac{{m_1(\lambda)}}{n},$
    \item $\pr_{\lambda,\mu}[\omega(i)=-i]=\frac{{m_1(\mu)}}{n},$
    \item $\pr_{\lambda,\mu}[\omega(i)=i \wedge \omega(j)=j]=\frac{{m_1(\lambda)}({m_1(\lambda)}-1)}{n(n-1)},$
    \item $\pr_{\lambda,\mu}[\omega(i)=-i \wedge \omega(j)=j]=\frac{{m_1(\lambda)} {m_1(\mu)}}{n(n-1)},$ and
    \item $\pr_{\lambda,\mu}[\omega(i)=-i \wedge \omega(j)=-j]=\frac{{m_1(\mu)}({m_1(\mu)}-1)}{n(n-1)}.$
\end{enumerate}
\end{lemma}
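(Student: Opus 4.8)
\textbf{Proof proposal for Lemma~\ref{1cycles}.}

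The plan is to reduce every statement to counting colored permutations in $C_{\lambda,\mu}$ with prescribed behavior on one or two points, using the class size formula from Lemma~\ref{lem:classsize} together with the symmetry and conjugation ideas already deployed in the proof of Lemma~\ref{lem:omegasatisfiesK}. For part (1), I would count the elements of $C_{\lambda,\mu}$ with $\omega(i)=i$, i.e.\ those having $i$ as a $1$-cycle of color $0$ (a positive fixed point). Removing that fixed point gives a bijection onto $C_{\lambda',\mu}$ in $B_{n-1}$, where $\lambda'$ is $\lambda$ with one part equal to $1$ deleted; this is valid precisely when $m_1(\lambda)\ge 1$, and when $m_1(\lambda)=0$ the probability is $0$, consistent with the claimed formula. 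Then
\[
\pr_{\lambda,\mu}[\omega(i)=i]=\frac{|C_{\lambda',\mu}|\ (\text{as a subgroup of }B_{n-1})}{|C_{\lambda,\mu}|}=\frac{(n-1)!\,2^{n-1}/z_{\lambda',\mu}}{n!\,2^{n}/z_{\lambda,\mu}}.
\]
Since $z_{\lambda,\mu}=2^{\ell(\lambda)}z_\lambda\,2^{\ell(\mu)}z_\mu$ and passing from $\lambda$ to $\lambda'$ changes $\ell(\lambda)$ by $1$ and multiplies $z_\lambda$ by $m_1(\lambda)/1 = m_1(\lambda)$ in the relevant factor (because $z_\lambda = \prod_i i^{m_i(\lambda)}m_i(\lambda)!$, and deleting a $1$-part divides by $1^{m_1}m_1!$ and multiplies by $1^{m_1-1}(m_1-1)!$), the ratio $z_{\lambda,\mu}/z_{\lambda',\mu}=2\,m_1(\lambda)$; combining with the $(n-1)!\,2^{n-1}/(n!\,2^n)=1/(2n)$ factor yields $m_1(\lambda)/n$. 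Part (2) is identical after applying the bijection $\phi$ of Lemma~\ref{lem:symm-Bnconjugacyclasses} that swaps even and odd cycles, or directly by the same fixed-point-removal argument with a negative fixed point (a $1$-cycle of color $1$), giving $m_1(\mu)/n$.

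For parts (3)--(5), the cleanest route is a two-step conditional decomposition rather than a single combinatorial count. For (3), write
\[
\pr_{\lambda,\mu}[\omega(i)=i\wedge \omega(j)=j]=\pr_{\lambda,\mu}[\omega(i)=i]\cdot \pr_{\lambda,\mu}[\omega(j)=j\mid \omega(i)=i].
\]
The first factor is $m_1(\lambda)/n$ by part (1). For the second factor, the event $\{\omega(i)=i\}$ carves out exactly the set of colored permutations having $i$ as a positive fixed point, and removing that fixed point identifies this set with $C_{\lambda',\mu}$ inside $B_{n-1}$ on the ground set $[n]\setminus\{i\}$; under this identification the conditional probability that $\omega(j)=j$ becomes the unconditional probability in $C_{\lambda',\mu}$ that a fixed distinct point $j$ is a positive fixed point, which by part (1) (applied in $B_{n-1}$) equals $m_1(\lambda')/(n-1)=(m_1(\lambda)-1)/(n-1)$. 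Multiplying gives $m_1(\lambda)(m_1(\lambda)-1)/(n(n-1))$. Parts (4) and (5) follow the same template: for (4), condition on $\omega(i)=i$ (probability $m_1(\lambda)/n$, leaving $C_{\lambda',\mu}$ in $B_{n-1}$), then apply part (2) in $B_{n-1}$ to get $m_1(\mu)/(n-1)$; for (5), condition on $\omega(i)=-i$ (probability $m_1(\mu)/n$, leaving $C_{\lambda,\mu'}$ in $B_{n-1}$), then apply part (2) in $B_{n-1}$ to get $(m_1(\mu)-1)/(n-1)$. One should note for (4) that conditioning on $\omega(i)=i$ and then asking for $\omega(j)=-j$ is symmetric to conditioning on $\omega(j)=-j$ first and then $\omega(i)=i$, so the $m_1(\lambda)m_1(\mu)$ product is unambiguous; this is immediate from the displayed chain rule either way.

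The main obstacle — really the only delicate point — is making the ``remove a fixed point'' identification rigorous as a bijection between $\{(\omega,\tau)\in C_{\lambda,\mu}: \omega(i)=i\}$ and the full conjugacy class $C_{\lambda',\mu}$ of $B_{n-1}$ on ground set $[n]\setminus\{i\}$, including the bookkeeping that a positive fixed point contributes a color-$0$ $1$-cycle (so $\lambda$, not $\mu$, loses a part) and a negative fixed point contributes a color-$1$ $1$-cycle (so $\mu$ loses a part), and that this map is a genuine bijection preserving cycle type on the remaining points. Once that is set up, everything else is the arithmetic of $z_{\lambda,\mu}$ recorded above, or simply repeated application of parts (1) and (2); I would state parts (1) and (2) first, prove them by the class-size ratio computation, and then obtain (3)--(5) purely formally from the chain rule plus the fixed-point-removal bijection, avoiding any further direct counting.
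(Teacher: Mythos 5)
Your proposal is correct and takes essentially the same approach as the paper: both prove (1)--(2) by identifying $\{\omega\in C_{\lambda,\mu}:\omega(i)=\pm i\}$ with a conjugacy class of $B_{n-1}$ via fixed-point removal and taking the ratio of class sizes from Lemma~\ref{lem:classsize}. The only cosmetic difference is that for (3)--(5) the paper removes both fixed points at once and computes a single ratio $|C_{\lambda'',\mu}|/|C_{\lambda,\mu}|$, whereas you factor the same computation through the chain rule; your relabeling in (4) is harmless since $i$ and $j$ play symmetric roles.
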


\begin{proof}
Observe that the five claims respectively hold when ${m_1(\lambda)}=0$, ${m_1(\mu)}=0$, ${m_1(\lambda)}\leq 1$, ${m_1(\lambda)}=0$ or ${m_1(\mu)}=0$, and ${m_1(\mu)}\leq 1$. We consider the remaining cases. Observe that $\omega(i)=i$ fixes an even cycle of length $1$ and $\omega(i)=-i$ fixes an odd cycle of length $1$. Letting $\lambda'$ and  $\mu'$ be the partitions resulting from deleting an instance of $1$ in $\lambda$ and $\mu$, respectively, we can apply Lemma \ref{lem:classsize} to conclude that
\[\pr_{\lambda,\mu}[\omega(i)=i]=\frac{|C_{\lambda',\mu}|}{|C_{\lambda,\mu|}}=\frac{{m_1(\lambda)}}{n} \quad \text{ and } \quad \pr_{\lambda,\mu}[\omega(i)=-i]=\frac{|C_{\lambda,\mu'}|}{|C_{\lambda,\mu}|}=\frac{{m_1(\mu)}}{n}.\]

Similarly, the remaining three cases fix two even cycles of length $1$, an even and an odd cycle of length $1$, and two odd cycles of length $1$ respectively. Letting $\lambda''$ and $\mu''$ denote the partitions resulting from deleting an instance of $2$  from $\lambda$ and $\mu$, respectively, we similarly find
\[\pr_{\lambda,\mu}[\omega(i)=i \wedge \omega(j)=j]=\frac{|C_{\lambda'',\mu}|}{|C_{\lambda,\mu}|}=\frac{{m_1(\lambda)}({m_1(\lambda)}-1)}{n(n-1)},\]
\[\pr_{\lambda,\mu}[\omega(i)=-i \wedge \omega(j)=j]=\frac{|C_{\lambda',\mu'}|}{|C_{\lambda,\mu}|}=\frac{{m_1(\lambda)} {m_1(\mu)}}{n(n-1)},\]
\[\pr_{\lambda,\mu}[\omega(i)=-i \wedge \omega(j)=-j]=\frac{|C_{\lambda,\mu''}|}{|C_{\lambda,\mu}|}=\frac{{m_1(\mu)}({m_1(\mu)}-1)}{n(n-1)}.\qedhere\]
\end{proof}

 We now consider $\inv_{ij}$ where $i,j\in [n]$ and $i<j$. We consider the following partition of $C_{\lambda,\mu}$:
\begin{equation}\label{conj-class-partition}
\begin{split}
\Omega_{1+}^{ij}&=\{\omega \in C_{\lambda,\mu}\mid \omega(i)=i,\omega(j)=j\},\\
\Omega_{1-}^{ij}&=\{\omega \in C_{\lambda,\mu}\mid \omega(i)=-i,\omega(j)=-j\},\\
\Omega_{2+}^{ij}&=\{\omega \in C_{\lambda,\mu}\mid \omega(i)=i,\omega(j)\neq \pm j\}\cup \{\omega\in C_{\lambda,\mu}\mid \omega(j)=j,\omega(i)\neq \pm i\},\\
\Omega_{2-}^{ij}&=\{\omega \in C_{\lambda,\mu}\mid \omega(i)=-i,\omega(j)\neq \pm j\}\cup \{\omega\in C_{\lambda,\mu}\mid \omega(j)=-j,\omega(i)\neq \pm i\},\\
\Omega_{3\phantom-}^{ij} & = C_{\lambda,\mu}\setminus (\Omega_{1+}^{ij} \cup \Omega_{1-}^{ij} \cup \Omega_{2+}^{ij} \cup \Omega_{2-}^{ij}).
\end{split}
\end{equation}
We start by considering the last set, which decomposes into subsets where the proportion of elements with an inversion at $(i,j)$ is exactly $1/2$.

\begin{lemma}\label{omega3}
Fix $i<j$ with $i,j\in [n]$. Half of the elements in $\Omega_3^{ij}$ have the inversion $(i,j)$. Consequently, 
\[ \pr_{\lambda,\mu}[\inv_{ij}=1\mid \Omega_3^{ij}]=\frac{1}{2} \quad \text{ and } \quad \pr_{\lambda,\mu}[\{\inv_{ij}=1\}\cap \Omega_3^{ij}]=\frac{1}{2}\pr_{\lambda,\mu}[\Omega_3^{ij}].\]
\end{lemma}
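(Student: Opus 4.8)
The plan is to exhibit an explicit fixed-point-free involution on $\Omega_3^{ij}$ that toggles whether $(i,j)$ is an inversion, and hence pairs up the elements with an inversion at $(i,j)$ with those without. The natural candidate is the map $\psi$ that acts by conjugation (or, equivalently here, by relabeling) using the transposition that swaps the values $i$ and $j$. Concretely, given $\omega\in C_{\lambda,\mu}$, let $t=(i\ j)$ be the element of $B_n$ corresponding to the plain transposition swapping $i$ and $j$ (fixing all other coordinates and introducing no signs), and set $\psi(\omega)=t\omega t$. Since $t$ has order $2$, $\psi$ is an involution, and since conjugation preserves cycle type, $\psi$ maps $C_{\lambda,\mu}$ to itself. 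The key computational point is to check the effect of $\psi$ on the pair of values $(\omega(i),\omega(j))$: conjugating by $t$ has the effect of swapping the roles of $i$ and $j$ both as inputs and as outputs, so $\psi(\omega)(i)$ and $\psi(\omega)(j)$ are obtained from $\omega(i),\omega(j)$ by (a) swapping them and (b) replacing any occurrence of the value $\pm i$ by $\pm j$ and vice versa.

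Next I would verify that $\psi$ restricts to $\Omega_3^{ij}$ and that it is fixed-point-free there. For $\omega\in\Omega_3^{ij}$, neither $\omega(i)$ nor $\omega(j)$ lies in $\{\pm i,\pm j\}$ in the "forbidden" ways excluded by $\Omega_{1\pm},\Omega_{2\pm}$ — more precisely, the defining property of $\Omega_3^{ij}$ is exactly that $\omega$ does not have $\omega(i)\in\{i,-i\}$ simultaneously with a constrained value at $j$, nor the symmetric condition; one should spell out that on $\Omega_3^{ij}$ the relabeling step (b) above never triggers, so $\psi(\omega)(i)=\omega(j)$ and $\psi(\omega)(j)=\omega(i)$. (One must be slightly careful: it is still possible that, say, $\omega(i)=j$ or $\omega(i)=-j$ for some $\omega\in\Omega_3^{ij}$, so step (b) can in principle apply; I would handle this by checking directly that in every such case the relabeled pair $(\psi(\omega)(i),\psi(\omega)(j))$ still has its order relation reversed relative to $(\omega(i),\omega(j))$, which is what ultimately matters.) Then I would confirm that $\psi$ stays inside $\Omega_3^{ij}$: since $\Omega_3^{ij}$ is the complement of a union of sets each defined by conditions symmetric under swapping $i\leftrightarrow j$, and $\psi$ implements precisely this swap, $\psi$ permutes $\{\Omega_{1+},\Omega_{1-},\Omega_{2+},\Omega_{2-}\}$ among themselves and hence fixes $\Omega_3^{ij}$ setwise.

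Finally, I would observe that $(i,j)$ is an inversion of $\omega$ exactly when $\omega(i)>\omega(j)$, and an inversion of $\psi(\omega)$ exactly when $\psi(\omega)(i)>\psi(\omega)(j)$, i.e.\ (on $\Omega_3^{ij}$, after accounting for step (b) as above) when $\omega(j)>\omega(i)$ — the opposite inequality, and these two values are never equal since $\omega$ is a bijection and $i\neq j$. Therefore $\psi$ interchanges $\{\omega\in\Omega_3^{ij}:\inv_{ij}(\omega)=1\}$ with $\{\omega\in\Omega_3^{ij}:\inv_{ij}(\omega)=0\}$, so these two sets have equal size, giving $\pr_{\lambda,\mu}[\inv_{ij}=1\mid \Omega_3^{ij}]=\tfrac12$; multiplying by $\pr_{\lambda,\mu}[\Omega_3^{ij}]$ gives the second identity.

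\textbf{Main obstacle.} The bookkeeping in step (b) is the delicate part: conjugating by the value-transposition $t=(i\ j)$ does not simply swap $\omega(i)$ and $\omega(j)$ when those images happen to equal $\pm i$ or $\pm j$, and one must check that in all such edge cases the order relation between the two relevant images is still reversed (and never made an equality). Pinning down exactly which of these edge cases can occur for $\omega\in\Omega_3^{ij}$ — as opposed to being already excluded into $\Omega_{1\pm}^{ij}$ or $\Omega_{2\pm}^{ij}$ — and dispatching each one, is where the real care is needed; everything else is a routine involution-counting argument.
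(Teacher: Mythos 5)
Your overall strategy (pair up the elements of $\Omega_3^{ij}$ via an involution that toggles the inversion at $(i,j)$) is the same as the paper's, but your specific choice of a \emph{single} involution --- conjugation by the plain transposition $t=(i\ j)$ --- does not work, and the ``edge cases'' you defer to the end are exactly where it breaks. Writing $a=\omega(i)$ and $b=\omega(j)$, conjugation by $t$ gives $\psi(\omega)(i)=t(b)$ and $\psi(\omega)(j)=t(a)$, so the toggle you need is equivalent to $t$ \emph{preserving} the order relation between $a$ and $b$. This holds when $a,b\notin\{\pm i,\pm j\}$, but fails in general. Two concrete failures: (1) if $\omega(i)=j$ and $\omega(j)=i$, then $\psi(\omega)(i)=j$ and $\psi(\omega)(j)=i$ again, so the entire set $\{\omega:\omega(i)=j,\ \omega(j)=i\}$ (all of whose elements have the inversion, since $j>i$) is carried to itself and nothing gets paired off; (2) with $i=1$, $j=3$, $\omega(1)=3$, $\omega(3)=2$ (an element of $\Omega_3^{ij}$), the permutation $\omega$ has the inversion, and $\psi(\omega)(1)=2>1=\psi(\omega)(3)$, so $\psi(\omega)$ has it too. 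Hence your claim that ``in every such case the relabeled pair still has its order relation reversed'' is false, and the argument collapses precisely on the subsets where $\omega(i)\in\{\pm j\}$ or $\omega(j)\in\{\pm i\}$. (Your other assertions --- that $\psi$ preserves cycle type and fixes $\Omega_3^{ij}$ setwise --- are fine; the gap is solely in the toggling step, which is the heart of the proof.)

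The paper's proof avoids this by partitioning $\Omega_3^{ij}$ into six pieces according to how $\{\omega(i),\omega(j)\}$ meets $\{\pm i,\pm j\}$ and using a \emph{different} involution on each piece: conjugation by $(i,j)$ only where it actually works (the piece with $\omega(i),\omega(j)\notin\{\pm i,\pm j\}$, and the piece pairing $\omega(i)=i,\omega(j)=-j$ with $\omega(i)=-i,\omega(j)=j$), and conjugation by a sign change $(-i)$ or $(-j)$ on the remaining pieces (e.g.\ pairing $\omega(i)=j,\omega(j)=i$ with $\omega(i)=-j,\omega(j)=-i$, or pairing $\omega(i)=j,\omega(j)=k$ with $\omega(i)=-j,\omega(j)=-k$). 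To repair your proof you would need to do essentially the same case split: no single conjugation can serve as the toggling involution on all of $\Omega_3^{ij}$.
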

\begin{proof}
We prove this by partitioning  $\Omega_3^{ij}\subseteq C_{\lambda,\mu}$ into subsets and defining involutions on each subset that interchange elements with or without the inversion $(i,j)$. Our partition is based on whether or not $i$ and $j$ are mapped to $\{\pm i,\pm j\}$, and all of our involutions involve conjugation, so we collect this information into Table \ref{tab:omega3} below.
\begin{table}[h!]
    \centering
    \begin{tabular}{|c|c|c|}
    \hline 
       set number & definition & conjugation by \\
        \hline 
        1 & $\{\omega\mid \omega(i)=i,\omega(j)=-j\}\cup \{\omega\mid \omega(i)=-i,\omega(j)=j\}$ & $(i,j)$ \\ \hline 
        2 & $\{\omega\mid \omega(i)=j,\omega(j)=i\}\cup \{\omega\mid \omega(i)=-j,\omega(j)=-i\}$ & $(-i)$ or $(-j)$ \\ \hline 
        3 & $\{\omega\mid \omega(i)=j,\omega(j)=-i\}\cup \{\omega\mid \omega(i)=-j,\omega(j)=i\}$ & $(-i)$ or $(-j)$ \\ \hline 
        4 & $\{\omega\mid \omega(i)=\pm j,\omega(j)\neq \pm i\}$ & $(-j)$ \\ \hline 
        5 & $\{\omega\mid \omega(i)\neq \pm j, \omega(j)=\pm i\}$ & $(-i)$ \\ \hline 
        6 & $\{\omega\mid \omega(i),\omega(j)\notin \{\pm i,\pm j\}\}$ & $(i,j)$ \\ \hline 
    \end{tabular}
    \caption{A partition of $\Omega_3^{ij}$ and involutions that interchange elements with or without the inversion $(i,j)$.}
    \label{tab:omega3}
\end{table}

It is clear that every element in $\Omega_3$ appears in exactly one of these sets. Considering each of these, we make the following conclusions on the respective sets.
\begin{enumerate}
    \item Conjugation by $(i,j)$ interchanges $\{\omega\mid \omega(i)=i,\omega(j)=-j\}$ and $\{\omega\mid \omega(i)=-i,\omega(j)=j\}$. All elements in the first subset have the inversion $(i,j)$, and all elements in the second set do not.
    \item Conjugation by $(-i)$ or $(-j)$ interchanges $\{\omega\mid \omega(i)=j,\omega(j)=i\}$ and $\{\omega\mid \omega(i)=-j,\omega(j)=-i\}$. All elements in the first subset have the inversion $(i,j)$, and all elements in the second set do not. 
    \item Conjugation by $(-i)$ or $(-j)$ interchanges $\{\omega\mid \omega(i)=j,\omega(j)=-i\}$ and $\{\omega\mid \omega(i)=-j,\omega(j)=i\}$. All elements in the first subset have the inversion $(i,j)$, and all elements in the second set do not. 
    \item If we fix $k$, conjugation by $(-j)$ interchanges $\{\omega \mid \omega(i)=j,\omega(j)=k\}$ and $\{\omega\mid \omega(i)=-j,\omega(j)=-k\}$. Exactly one of $j>k$ or $j<k$ is true, and the latter is equivalent to $-j>-k$.
    \item Again, we fix $k$ and conjugate by $(-i)$ to interchange $\{\omega\mid \omega(i)=k,\omega(j)=i\}$ and $\{\omega\mid \omega(i)=-k,\omega(j)=-i\}$. Exactly one of $k>i$ or $k<i$ is true, and the latter is equivalent to $-k>-i$.
    \item On this set, conjugation by $(i,j)$ simply interchanges the images of $i$ and $j$.
\end{enumerate}
Hence, half of the elements in $\Omega_3^{ij}$ have the inversion $(i,j)$.
\end{proof}

The remaining four subsets in (\ref{conj-class-partition}) take some additional care. We establish the corresponding results for them in the next lemma. 

\begin{lemma}\label{omega12}
Fix $i<j$ with $i,j\in [n]$. Then Table \ref{tab:omega12} holds.
\begin{center}
\begin{table}[h]
    \begin{tabular}{|c|c|c|}
        \hline 
         set $\Omega$ & $\pr_{\lambda,\mu}[\Omega]$ & $\pr_{\lambda,\mu}[\inv_{ij}=1\mid\Omega]$\\
        \hline 
        
         $\Omega_{1+}^{ij}$ & $\frac{{m_1(\lambda)}({m_1(\lambda)}-1)}{n(n-1)}$ & $0$ \\ \hline 
         $\Omega_{1-}^{ij}$ & $\frac{{m_1(\mu)}({m_1(\mu)}-1)}{n(n-1)}$ & $1$ \\ \hline 
        
        $\Omega_{2+}^{ij}$ & $2\cdot\left(\frac{{m_1(\lambda)}}{n}-\frac{{m_1(\lambda)}({m_1(\lambda)}-1)+{m_1(\lambda)}{m_1(\mu)}}{n(n-1)}\right)$ &  
        \makecell{$\frac{1}{2}-\frac{j-i-1}{4(n-2)}$ }
        \\  \hline
         $\Omega_{2-}^{ij}$ & $2\cdot\left(\frac{{m_1(\mu)}}{n}-\frac{{m_1(\mu)}({m_1(\mu)}-1)+{m_1(\lambda)}{m_1(\mu)}}{n(n-1)}\right)$ & \makecell{$\frac{1}{2}+\frac{j-i-1}{4(n-2)}$}\\  \hline 
    \end{tabular}
    \caption{Sizes and conditional probabilities for the remaining sets in $C_{\lambda,\mu}$.}
    \label{tab:omega12}
\end{table}
\end{center}
\end{lemma}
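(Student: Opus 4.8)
The plan is to handle the four sets one at a time, using the centralizer‑counting formula of Lemma~\ref{lem:classsize} for the probabilities $\pr_{\lambda,\mu}[\Omega]$ and conjugation‑based involutions for the conditional probabilities $\pr_{\lambda,\mu}[\inv_{ij}=1\mid \Omega]$. The probabilities of $\Omega_{1+}^{ij}$ and $\Omega_{1-}^{ij}$ are immediate from Lemma~\ref{1cycles}(3) and (5). For the corresponding conditional probabilities: if $\omega\in\Omega_{1+}^{ij}$ then $\omega(i)=i<j=\omega(j)$, so $(i,j)$ is never an inversion and the value is $0$; if $\omega\in\Omega_{1-}^{ij}$ then $\omega(i)=-i>-j=\omega(j)$, so $(i,j)$ is always an inversion and the value is $1$. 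The probabilities of $\Omega_{2+}^{ij}$ and $\Omega_{2-}^{ij}$ follow from inclusion–exclusion applied to Lemma~\ref{1cycles}: $\pr_{\lambda,\mu}[\omega(i)=i,\ \omega(j)\neq\pm j]=\pr[\omega(i)=i]-\pr[\omega(i)=i,\omega(j)=j]-\pr[\omega(i)=i,\omega(j)=-j]$, and the last term equals $\frac{m_1(\lambda)m_1(\mu)}{n(n-1)}$ by the same argument as part (4) (this is the $\phi$-image under Lemma~\ref{lem:symm-Bnconjugacyclasses} of the case $\omega(i)=-i,\omega(j)=j$, or can be computed directly via Lemma~\ref{lem:classsize}); by symmetry in $i\leftrightarrow j$ one picks up the factor $2$, giving the stated formula, and analogously for $\Omega_{2-}^{ij}$.

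The substantive content is the two conditional probabilities $\frac12\mp\frac{j-i-1}{4(n-2)}$ for $\Omega_{2+}^{ij}$ and $\Omega_{2-}^{ij}$; I expect this to be the main obstacle. By the switching bijection $\phi$ of Lemma~\ref{lem:symm-Bnconjugacyclasses} (which sends $C_{\lambda,\mu}\to C_{\mu,\lambda}$, negates the smallest element of each cycle, and visibly changes whether $i$ resp.\ $j$ is a fixed point into whether it maps to its negative), it suffices to treat $\Omega_{2+}^{ij}$ and then deduce the $\Omega_{2-}^{ij}$ case; I would verify that $\phi$ carries $\Omega_{2+}^{ij}$ to $\Omega_{2-}^{ij}$ and turns an inversion at $(i,j)$ into a non‑inversion and vice versa, which flips $\frac12-x$ into $\frac12+x$. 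So fix $\omega\in\Omega_{2+}^{ij}$, say $\omega(i)=i$ and $\omega(j)=k\notin\{\pm i,\pm j\}$ (the other half of $\Omega_{2+}^{ij}$, where $\omega(j)=j$, is handled symmetrically and contributes equally). Then $(i,j)$ is an inversion iff $i>k$, i.e.\ iff $k\in\{-n,\dots,-1\}\cup\{1,\dots,i-1\}$. Conjugating by $((j,k),\mathbf 0)$ for varying admissible $k$ shows (as in the base case of Lemma~\ref{lem:omegasatisfiesK}) that, conditioned on $\omega(i)=i$ and on $\omega(j)$ lying outside $\{\pm i,\pm j\}$, the value $\omega(j)$ is uniformly distributed over $[\pm n]\setminus\{\pm i,\pm j\}$, a set of size $2n-4=2(n-2)$; of these, the "inversion" values are the $n$ negative ones together with $\{1,\dots,i-1\}\setminus\{i\}$ which has $i-1$ elements but excludes nothing extra — wait, more carefully, among $[\pm n]\setminus\{\pm i,\pm j\}$ the values less than $i$ are all $n$ negatives minus $\{-i,-j\}$, i.e.\ $n-2$ of them, plus $\{1,\dots,i-1\}\setminus\{j\}$; since $i<j$ this is $i-1$ elements, for a total of $(n-2)+(i-1)$; dividing by $2(n-2)$ gives $\frac12+\frac{i-1}{2(n-2)}$ — so the count must instead be set up so the deficit is symmetric, which is why the two halves $\{\omega(i)=i\}$ and $\{\omega(j)=j\}$ must be averaged: in the second half, $\omega(i)=k'\notin\{\pm i,\pm j\}$ and $\omega(j)=j$, and $(i,j)$ is an inversion iff $k'>j$, the number of such $k'$ in $[\pm n]\setminus\{\pm i,\pm j\}$ being $(n-j)$; averaging $\frac{(n-2)+(i-1)}{2(n-2)}$ with $\frac{n-j}{2(n-2)}$ — these don't have equal denominators in the naive count, so I will instead recount each half over the full set $[\pm n]\setminus\{\pm i\}$ of size $2n-2$ conditioned correctly, subtracting the $\Omega_1$ and $\Omega_3$-type contributions, and the $\frac{j-i-1}{4(n-2)}$ correction will emerge from the asymmetry between how many of the "below $i$" versus "above $j$" targets are excluded.

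Concretely, the cleanest route I would actually write: partition $\Omega_{2+}^{ij}$ by the value of the non‑fixed image and use the uniformity from conjugation (base case of Lemma~\ref{lem:omegasatisfiesK}) to reduce the conditional probability to a ratio of cardinalities of explicit subsets of $[\pm n]$; then carefully count, keeping track that the forbidden values $\{\pm i,\pm j,i,j\}$ are distributed asymmetrically about the thresholds $i$ and $j$, and that the two sub‑cases ($i$ fixed vs.\ $j$ fixed) contribute in proportion to their sizes, which are equal by the $(i,j)$‑conjugation symmetry. The arithmetic will collapse to $\frac12-\frac{j-i-1}{4(n-2)}$. The main obstacle is precisely this bookkeeping: making sure that "conditioned on being in $\Omega_{2+}^{ij}$" one really does get a uniform distribution on the right $2(n-2)$‑element set, and that the two halves are weighted correctly; once that is set up, the count is elementary. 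Finally, apply $\phi$ to transfer everything to $\Omega_{2-}^{ij}$, flipping the sign of the correction term and swapping $m_1(\lambda)\leftrightarrow m_1(\mu)$, which yields the last row of Table~\ref{tab:omega12}.
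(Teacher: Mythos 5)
Your treatment of $\Omega_{1\pm}^{ij}$, of the probabilities $\pr_{\lambda,\mu}[\Omega_{2\pm}^{ij}]$, and of the conditional probability on $\Omega_{2+}^{ij}$ is essentially the paper's argument, and your arithmetic there is right despite your mid-stream worry: the two halves of $\Omega_{2+}^{ij}$ have equal size (conjugation by $(i,j)$), each half is uniform over the $2(n-2)$ admissible target values (conjugation by a long cycle on $[n]\setminus\{i,j\}$ and by $(-j)$, resp.\ $(-i)$ --- not by $((j,k),\mathbf{0})$, which would move $j$ itself), and your counts $n+i-3$ and $n-j$ of inversion-producing targets are exactly the paper's; the plain average $\frac{(n+i-3)+(n-j)}{4(n-2)}=\frac12-\frac{j-i-1}{4(n-2)}$ needs no further recounting.

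The genuine gap is the last row. The bijection $\phi$ of Lemma~\ref{lem:symm-Bnconjugacyclasses} does carry $\Omega_{2+}^{ij}\subseteq C_{\lambda,\mu}$ onto $\Omega_{2-}^{ij}\subseteq C_{\mu,\lambda}$ (it turns the even fixed point $(i)$ into $(-i)$ and preserves $|\omega|$), but it does \emph{not} ``turn an inversion at $(i,j)$ into a non-inversion and vice versa.'' The map $\phi$ negates $\omega(a)$ only at the one position $a$ of each cycle whose image is the cycle's minimal element; for generic $\omega\in\Omega_{2+}^{ij}$ with $\omega(i)=i$ and $\omega(j)=k$, one gets $\phi(\omega)(j)=k$ (not $-k$), and then the old condition $k<i$ and the new condition $k<-i$ are neither equal nor complementary (e.g.\ $k>i$ gives a non-inversion both before and after). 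So the claimed flip $\frac12-x\mapsto\frac12+x$ does not follow from $\phi$. The fix is simply to rerun your direct count on $\Omega_{2-}^{ij}$: when $\omega(i)=-i$, the inversion-producing values of $\omega(j)$ are $\{-n,\ldots,-i-1\}\setminus\{-j\}$, which has $n-i-1$ elements; when $\omega(j)=-j$, the inversion-producing values of $\omega(i)$ are the elements of $[\pm n]\setminus\{\pm i,\pm j\}$ exceeding $-j$, of which there are $n+j-4$; averaging over the two equal halves gives $\frac{(n-i-1)+(n+j-4)}{4(n-2)}=\frac12+\frac{j-i-1}{4(n-2)}$ as required. (The symmetry lemma is still what you want for the \emph{probability} $\pr_{\lambda,\mu}[\Omega_{2-}^{ij}]$, where it correctly swaps $m_1(\lambda)\leftrightarrow m_1(\mu)$.)
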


\begin{proof}
    The results for $\Omega_{1+}^{ij}$ and $\Omega_{1-}^{ij}$ follow immediately from Lemma \ref{1cycles} and the definition of $\inv_{ij}$. For $\Omega_{2+}^{ij}$, Lemma \ref{1cycles} implies
    \begin{equation*}
        \begin{split}
            \pr_{\lambda,\mu}[\omega(i)=i,\omega(j)\neq \pm j] 
            & =\pr_{\lambda,\mu}[\omega(i)=i]-\pr_{\lambda,\mu}[\omega(i)=i,\omega(j)=\pm j] \\
            & = \left(\frac{{m_1(\lambda)}}{n}-\frac{{m_1(\lambda)}({m_1(\lambda)}-1)+{m_1(\lambda)}{m_1(\mu)}}{n(n-1)}\right).
        \end{split}
    \end{equation*}
    The same result holds when $i$ and $j$ are interchanged, so $\pr_{\lambda,\mu}[\Omega_{2+}^{ij}]$ is twice the above quantity. The corresponding result for $\Omega_{2-}^{ij}$ follows by combining this with Lemma~\ref{lem:symm-Bnconjugacyclasses}.

    Using conjugation by a cycle containing the elements in $[n]\setminus \{i,j\}$ implies that $\{w\in \Omega_{2+}^{ij}\mid \omega(i)=i,\omega(j)=k\}$ for $k\in [n]\setminus \{i,j\}$ all have the same size. Conjugating by $(-j)$ then implies $\{w\in \Omega_{2+}^{ij}\mid \omega(i)=i,\omega(j)=k\}$ and $\{w\in \Omega_{2+}^{ij}\mid \omega(i)=i,\omega(j)=-k\}$ have the same size. Conjugation by $(ij)$ then implies $\{w\in \Omega_{2+}^{ij}\mid \omega(i)=i,\omega(j)=k\}$ has the same size as $\{w\in \Omega_{2+}^{ij}\mid \omega(i)=k,\omega(j)=j\}$. Hence, $\Omega_{2+}^{ij}$ decomposes into $4(n-2)$ sets of the same size based on the images of $i$ and $j$. Observe that $\inv_{ij}$ is constant on each of these sets.
    
    We now consider some casework on $\Omega_{2+}^{ij}$ to establish the explicit value in Table \ref{tab:omega12} for $\pr_{\lambda,\mu}[\inv_{ij}=1\mid \Omega_{2+}^{ij}]$. If $\omega(i)=i$, then the possibilities for $\omega(j)$ that result in an inversion are elements in $\{-n,\ldots,i-1\}\setminus \{0,-i,-j\}$ for a total of $n+i-3$ possibilities. If $\omega(j)=j$, then there are $n-j$ possibilities given by $\{j+1,\ldots,n\}$. Combined, we conclude that $\inv_{ij}$ takes value $1$ on a total of $2n-j+i-3$ of the $4(n-2)$ sets from the preceding paragraph, and it takes value 0 on all remaining sets. Hence,
    \[\pr_{\lambda,\mu}[\inv_{ij}=1\mid \Omega_{2+}^{ij}]=\frac{2n-j+i-3}{4(n-2)}
    =\frac{1}{2}-\frac{j-i-1}{4(n-2)}.\]
    The corresponding result for $\Omega_{2-}^{ij}$ follows by combining this with Lemma~\ref{lem:symm-Bnconjugacyclasses}.
\end{proof}

We can now derive an explicit formula for $\E_{\lambda,\mu}[\inv_{ij}]$. The expression below will appear frequently in our calculations.

\begin{definition}\label{def:Delta-biptn} For a bi-partition $(\lambda,\mu)$ and a nonnegative integer $k$, define a function $\Delta^k$ by
\[\Delta^k(\lambda, \mu):=m_1(\lambda)^k-m_1(\mu)^k.\]  
\end{definition}

An immediate corollary of  Lemma~\ref{lem:symm-Bnconjugacyclasses} is the following vanishing condition for the  function  $\Delta^k(\lambda,\mu)$:
\begin{equation}\label{eqn:vanishing-condition}
\sum_{(\lambda,\mu)\vdash n} z^{-1}_{\lambda,\mu} \Delta^k(\lambda,\mu)=0.
\end{equation}

\begin{proposition}\label{ij-indicator}
    For any $i<j$ with $i,j\in [n]$,  
 \begin{equation*}   
 \E_{\lambda,\mu}[\inv_{ij}]=\frac{1}{2}\left(1 +
\frac{\Delta^1(\lambda,\mu)-\Delta^2(\lambda, \mu)}{n(n-1)} \right)
-\frac{j-i-1}{2(n-2)}\left( 
\frac{\Delta^1(\lambda,\mu)}{n-1}- 
\frac{\Delta^2(\lambda,\mu)}{n(n-1)}
\right).
\end{equation*}
\end{proposition}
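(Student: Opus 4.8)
The plan is to condition on the five-block partition $C_{\lambda,\mu}=\Omega_{1+}^{ij}\sqcup\Omega_{1-}^{ij}\sqcup\Omega_{2+}^{ij}\sqcup\Omega_{2-}^{ij}\sqcup\Omega_3^{ij}$ from \eqref{conj-class-partition} and apply the law of total probability. Since $\inv_{ij}$ is $\{0,1\}$-valued, $\E_{\lambda,\mu}[\inv_{ij}]=\pr_{\lambda,\mu}[\inv_{ij}=1]=\sum_{\Omega}\pr_{\lambda,\mu}[\{\inv_{ij}=1\}\cap\Omega]$, the sum running over the five blocks. Lemma~\ref{omega12} supplies $\pr_{\lambda,\mu}[\Omega]$ and $\pr_{\lambda,\mu}[\inv_{ij}=1\mid\Omega]$ for $\Omega\in\{\Omega_{1+}^{ij},\Omega_{1-}^{ij},\Omega_{2+}^{ij},\Omega_{2-}^{ij}\}$ (with conditional values $0$, $1$, $\tfrac12-\tfrac{j-i-1}{4(n-2)}$, $\tfrac12+\tfrac{j-i-1}{4(n-2)}$), Lemma~\ref{omega3} gives $\pr_{\lambda,\mu}[\{\inv_{ij}=1\}\cap\Omega_3^{ij}]=\tfrac12\pr_{\lambda,\mu}[\Omega_3^{ij}]$, and the last probability comes for free as the complement $\pr_{\lambda,\mu}[\Omega_3^{ij}]=1-\pr_{\lambda,\mu}[\Omega_{1+}^{ij}]-\pr_{\lambda,\mu}[\Omega_{1-}^{ij}]-\pr_{\lambda,\mu}[\Omega_{2+}^{ij}]-\pr_{\lambda,\mu}[\Omega_{2-}^{ij}]$. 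Working with the joint probabilities (rather than conditional ones) keeps the identity valid even for $(\lambda,\mu)$ where some block is empty.

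Rather than expand everything at once, the computation should be organized around the ``$\tfrac12$'' parts. Abbreviating $\pr_{\lambda,\mu}$ by $\pr$, the blocks $\Omega_{2\pm}^{ij}$ and $\Omega_3^{ij}$ each contribute $\tfrac12$ times their probability, $\Omega_{1-}^{ij}$ contributes $1\cdot\pr[\Omega_{1-}^{ij}]$, and $\Omega_{1+}^{ij}$ contributes $0$; the two signed deviations $\mp\tfrac{j-i-1}{4(n-2)}$ from $\Omega_{2\pm}^{ij}$ peel off. Summing gives
\[
\E_{\lambda,\mu}[\inv_{ij}]=\tfrac12\bigl(1-\pr[\Omega_{1+}^{ij}]-\pr[\Omega_{1-}^{ij}]\bigr)+\pr[\Omega_{1-}^{ij}]+\frac{j-i-1}{4(n-2)}\bigl(\pr[\Omega_{2-}^{ij}]-\pr[\Omega_{2+}^{ij}]\bigr)=\tfrac12+\tfrac12\bigl(\pr[\Omega_{1-}^{ij}]-\pr[\Omega_{1+}^{ij}]\bigr)+\frac{j-i-1}{4(n-2)}\bigl(\pr[\Omega_{2-}^{ij}]-\pr[\Omega_{2+}^{ij}]\bigr).
\]
So the whole problem reduces to evaluating the two differences $\pr[\Omega_{1-}^{ij}]-\pr[\Omega_{1+}^{ij}]$ and $\pr[\Omega_{2-}^{ij}]-\pr[\Omega_{2+}^{ij}]$.

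For these I would substitute the explicit values from Lemma~\ref{omega12} and simplify using Definition~\ref{def:Delta-biptn}. For the fixed-point blocks, $\pr[\Omega_{1-}^{ij}]-\pr[\Omega_{1+}^{ij}]=\tfrac{m_1(\mu)(m_1(\mu)-1)-m_1(\lambda)(m_1(\lambda)-1)}{n(n-1)}=\tfrac{\Delta^1(\lambda,\mu)-\Delta^2(\lambda,\mu)}{n(n-1)}$, using $m_1(\mu)^2-m_1(\mu)-m_1(\lambda)^2+m_1(\lambda)=\Delta^1-\Delta^2$. For the $\Omega_{2\pm}^{ij}$ blocks the cross terms $m_1(\lambda)m_1(\mu)$ cancel in the difference, and the surviving linear and quadratic pieces reassemble into $\Delta^1$ and $\Delta^2$, yielding $\pr[\Omega_{2-}^{ij}]-\pr[\Omega_{2+}^{ij}]=\tfrac{2(\Delta^2(\lambda,\mu)-n\Delta^1(\lambda,\mu))}{n(n-1)}$. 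Plugging both identities into the displayed expression and noting that $\tfrac{j-i-1}{4(n-2)}\cdot\tfrac{2(\Delta^2-n\Delta^1)}{n(n-1)}=-\tfrac{j-i-1}{2(n-2)}\bigl(\tfrac{\Delta^1}{n-1}-\tfrac{\Delta^2}{n(n-1)}\bigr)$ gives exactly the claimed formula.

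There is no genuine conceptual obstacle: the real content is in Lemmas~\ref{omega3} and \ref{omega12}, and what remains is bookkeeping. The two points needing care are (i) running the total-probability step through joint probabilities so empty blocks cause no division issues, and (ii) the two algebraic simplifications of the differences, where one must check that the $m_1(\lambda)m_1(\mu)$ terms cancel and that the remaining $m_1(\lambda),m_1(\mu)$ and $m_1(\lambda)^2,m_1(\mu)^2$ terms regroup correctly into $\Delta^1$ and $\Delta^2$.
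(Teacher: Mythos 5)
Your proposal is correct and follows essentially the same route as the paper: both condition on the five-block partition from \eqref{conj-class-partition}, invoke Lemmas~\ref{omega3} and~\ref{omega12} for the block probabilities and conditional inversion probabilities, and then regroup into $\Delta^1$ and $\Delta^2$. Your reorganization around the two differences $\pr[\Omega_{1-}^{ij}]-\pr[\Omega_{1+}^{ij}]$ and $\pr[\Omega_{2-}^{ij}]-\pr[\Omega_{2+}^{ij}]$ is a clean way to do the same bookkeeping, and both simplifications you state check out.
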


\begin{proof}
    Using Lemmas \ref{omega3} and \ref{omega12}, we find that
    \begin{enumerate}
        \item $\pr_{\lambda,\mu}[\Omega_{1+}^{ij}\cap\{\inv_{ij}=1\}]=\frac{1}{2}\pr_{\lambda,\mu}[\Omega_{1+}^{ij}]-\frac{{m_1(\lambda)}({m_1(\lambda)}-1)}{2n(n-1)}$,
        \item $\pr_{\lambda,\mu}[\Omega_{1-}^{ij}\cap \{\inv_{ij}=1\}]=\frac{1}{2}\pr_{\lambda,\mu}[\Omega_{1-}^{ij}]+\frac{{m_1(\mu)}({m_1(\mu)}-1)}{2n(n-1)}$,
        \item $\pr_{\lambda,\mu}[\Omega_{2+}^{ij}\cap \{\inv_{ij}=1\}]=\frac{1}{2}\pr_{\lambda,\mu}[\Omega_{2+}^{ij}]-\left(\frac{{m_1(\lambda)}}{n}-\frac{{m_1(\lambda)}({m_1(\lambda)}-1)+{m_1(\lambda)}{m_1(\mu)}}{n(n-1)}\right)\cdot \frac{j-i-1}{2(n-2)}$, 
        \item $\pr_{\lambda,\mu}[\Omega_{2-}^{ij}\cap \{\inv_{ij}=1\}]=\frac{1}{2}\pr_{\lambda,\mu}[\Omega_{2-}^{ij}]+\left(\frac{{m_1(\mu)}}{n}-\frac{{m_1(\mu)}({m_1(\mu)}-1)+{m_1(\lambda)}{m_1(\mu)}}{n(n-1)}\right)\cdot \frac{j-i-1}{2(n-2)}$, and
        \item $\pr_{\lambda,\mu}[\Omega_3^{ij}\cap \{\inv_{ij}=1\}]=\frac{1}{2}\pr_{\lambda,\mu}[\Omega_3^{ij}]$.
    \end{enumerate}
    Adding the above terms, we obtain
    \begin{equation*} 
    \begin{split}
        &\frac{1}{2}-\frac{m_1(\lambda)^2-{m_1(\lambda)}}{2n(n-1)}+\frac{{m_1(\mu)}^2-{m_1(\mu)}}{2n(n-1)}
    -\frac{m_1(\lambda)}{n}\cdot \frac{j-i-1}{2(n-2)}+\frac{m_1(\mu)}{n}\cdot \frac{j-i-1}{2(n-2)}\\
    &+\frac{m_1(\lambda)^2-m_1(\lambda)}{n(n-1)}\cdot \frac{j-i-1}{2(n-2)}
    -\frac{m_1(\mu)^2-m_1(\mu)}{n(n-1)}\cdot \frac{j-i-1}{2(n-2)}.
    \end{split}
    \end{equation*}
    Regrouping terms, we have
    \begin{equation*} 
    \begin{split}
    & \frac{1}{2}-\frac{m_1(\lambda)^2-{m_1(\mu)^2}}{2n(n-1)}+\frac{{m_1(\lambda)}-{m_1(\mu)}}{2n(n-1)} \\
    & 
    -\frac{m_1(\lambda)-m_1(\mu)}{n-1}\cdot \frac{j-i-1}{2(n-2)}+\frac{m_1(\lambda)^2-m_1(\mu)^2}{n(n-1)}\cdot \frac{j-i-1}{2(n-2)}.
    \end{split}
    \end{equation*}
    Applying the definition of $\Delta^k(\lambda,\mu)$ and factoring appropriately then establishes the result.
\end{proof}

\begin{corollary}
    For any $i,j\in [n]$ where $i<j$, we have
    \[\E_{\lambda,\mu}[\inv_{ij}]+\E_{\mu,\lambda}[\inv_{ij}]=1.\]
\end{corollary}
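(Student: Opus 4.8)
The plan is to read off the result directly from the explicit formula for $\E_{\lambda,\mu}[\inv_{ij}]$ in Proposition~\ref{ij-indicator}, using the fact that the function $\Delta^k$ is antisymmetric under swapping $\lambda$ and $\mu$. First I would observe that by Definition~\ref{def:Delta-biptn}, $\Delta^k(\mu,\lambda) = m_1(\mu)^k - m_1(\lambda)^k = -\Delta^k(\lambda,\mu)$ for every $k$; in particular $\Delta^1(\mu,\lambda) = -\Delta^1(\lambda,\mu)$ and $\Delta^2(\mu,\lambda) = -\Delta^2(\lambda,\mu)$.

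Next I would write out $\E_{\mu,\lambda}[\inv_{ij}]$ by substituting $(\mu,\lambda)$ into the formula of Proposition~\ref{ij-indicator}. The formula has the shape $\E_{\lambda,\mu}[\inv_{ij}] = \frac{1}{2} + A(n,i,j)\cdot\big(\Delta^1(\lambda,\mu) - \Delta^2(\lambda,\mu)\big)$ for an appropriate coefficient depending only on $n$, $i$, $j$ — more precisely, grouping the two displayed terms, every occurrence of $\Delta^1$ and $\Delta^2$ is linear, so the whole expression is $\tfrac12$ plus a linear form $L\big(\Delta^1(\lambda,\mu),\Delta^2(\lambda,\mu)\big)$ whose coefficients are functions of $n,i,j$ only. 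Swapping $\lambda \leftrightarrow \mu$ therefore sends $\E_{\lambda,\mu}[\inv_{ij}]$ to $\tfrac12 + L\big(-\Delta^1(\lambda,\mu), -\Delta^2(\lambda,\mu)\big) = \tfrac12 - L\big(\Delta^1(\lambda,\mu),\Delta^2(\lambda,\mu)\big) = 1 - \E_{\lambda,\mu}[\inv_{ij}]$.

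Adding the two expressions then gives $\E_{\lambda,\mu}[\inv_{ij}] + \E_{\mu,\lambda}[\inv_{ij}] = 1$, as claimed. There is essentially no obstacle here: the only thing to be careful about is that the $\tfrac12$ constant term does not change sign (it is the only summand not carrying a factor of $\Delta^1$ or $\Delta^2$), so the two halves combine to a full $1$ rather than cancelling. Alternatively, one could avoid invoking the explicit formula entirely and argue bijectively via the map $\phi$ of Lemma~\ref{lem:symm-Bnconjugacyclasses}: for $i<j$ in $[n]$, negating the smallest-absolute-value element of each cycle is a bijection $C_{\lambda,\mu} \to C_{\mu,\lambda}$, and one would need to check that it flips the indicator $\inv_{ij}$ on exactly the right proportion of elements — but this requires re-examining the casework of Lemmas~\ref{omega3} and~\ref{omega12}, so the computational route from Proposition~\ref{ij-indicator} is cleaner and is the one I would present.
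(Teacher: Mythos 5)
Your proof is correct and matches the paper's intended argument: the corollary is stated as an immediate consequence of Proposition~\ref{ij-indicator}, deduced exactly as you do from the antisymmetry $\Delta^k(\mu,\lambda)=-\Delta^k(\lambda,\mu)$, which flips the sign of every term except the constant $\tfrac12$.
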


\begin{remark} We can also analyze indicator functions $\inv_{i j}$ where $i,j$ are not necessarily positive and obtain similar formulas. Since $\inv_{i,j}=\inv_{-i,-j}$, we only need to separately consider $i<0<j$. When $j\neq -i$, the appearance of $j-i-1$ in the table of Lemma~\ref{omega12} would be replaced with $j-i-3$. We will consider the case of $j=-i$ in Section~\ref{sec:4.2}.
\end{remark}

As we now have an explicit formulation of $\E_{\lambda,\mu}[\inv_{ij}]$ for $i,j\in [n]$ with $i<j$, we can derive explicit formulations for the means of any statistics that can be expressed as a weighted linear combination of these $\inv_{ij}$. This motivates our next definition.

\begin{definition}\label{def:GenInvStat} For any function $\mathrm{wt}_n :[ n]\times [n] \rightarrow \mathbb{R}, $ define the weighted  inversion statistic $X_n^{wt}$ to be \[X_n^{wt}:=\sum_{i,j\in [n], i<j}\mathrm{wt}_n(i,j)\inv_{ij}.\] Additionally, define 
\begin{equation}
    \label{eqn:alpha-beta}
\alpha(X_n^{wt}):=\sum_{{i,j\in [n]}}\wt_n(i,j), \quad
\beta(X_n^{wt}):=\sum_{{i,j\in [n],i<j}} (j-i-1) \wt_n(i,j).
\end{equation}
\end{definition}

We now apply our results on $\E_{\lambda,\mu}[\inv_{i,j}]$ to calculate $\E_{\lambda, \mu}[X_n^{wt}]$ for any weighted inversion statistic $X_n^{wt}$.

\begin{theorem}\label{thm:generalinversion}
Let $X_n^{wt}=\sum_{i,j\in [n]} \wt_n(i,j) \inv_{i,j}$ be a weighted inversion statistic.  Then $\E_{\lambda,\mu}[X_n^{wt}]$ depends only on $n,m_1(\lambda),$ and $m_1(\mu)$. Furthermore, it is given by the formula
\begin{align*}
\frac{\alpha(X_n^{wt})}{2}\left(1 +
\frac{\Delta^1(\lambda,\mu)-\Delta^2(\lambda, \mu)}{n(n-1)} \right)
-\frac{ 
\beta(X_n^{wt})}{2(n-2)}\left( 
\frac{\Delta^1(\lambda,\mu)}{n-1}- 
\frac{\Delta^2(\lambda,\mu)}{n(n-1)}
\right).
\end{align*} 
Moreover, the first moment on all of $B_n$ is 
\begin{equation}\label{eqn:gen-inv-first-moment-Bn}
\E_{B_n}[X_n^{wt}]=\frac{\alpha(X_n^{wt})}{2}.
\end{equation}
Additionally,
\begin{equation}\label{eqn:1stmoment-conjclass-wholegp}\E_{\lambda, \mu}[X_n^{\wt}]=\E_{B_n}[X_n^{wt}] +f_n(m_1(\lambda), m_1(\mu))
\end{equation}  
where, for fixed $n$,  $f_n(a_1, b_1)$ is a polynomial  of degree at most 2 in the two variables $a_1, b_1$  such that
\begin{equation}\label{eqn:vanishing-condition-fpoly}\sum_{(\lambda,\mu)\vdash n} z^{-1}_{\lambda,\mu} f_n(m_1(\lambda), m_1(\mu))=0. \end{equation}
\end{theorem}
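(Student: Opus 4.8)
The plan is to prove Theorem~\ref{thm:generalinversion} by directly summing the per-pair formula from Proposition~\ref{ij-indicator} against the weights $\wt_n(i,j)$, and then repackaging the resulting expression in the form \eqref{eqn:1stmoment-conjclass-wholegp}. First I would apply linearity of expectation:
\[
\E_{\lambda,\mu}[X_n^{\wt}]=\sum_{i<j}\wt_n(i,j)\,\E_{\lambda,\mu}[\inv_{ij}].
\]
Substituting the formula of Proposition~\ref{ij-indicator}, the right-hand side splits into two pieces: the term $\tfrac12\bigl(1+\tfrac{\Delta^1-\Delta^2}{n(n-1)}\bigr)$, which is independent of $(i,j)$ and hence contributes $\tfrac{\alpha(X_n^{\wt})}{2}\bigl(1+\tfrac{\Delta^1-\Delta^2}{n(n-1)}\bigr)$ once we recognize $\sum_{i<j}\wt_n(i,j)$ as $\alpha(X_n^{\wt})$ (noting the factor-of-two/symmetry convention in \eqref{eqn:alpha-beta}, which I would state carefully); and the term carrying the factor $(j-i-1)$, whose coefficient is $-\tfrac{1}{2(n-2)}\bigl(\tfrac{\Delta^1}{n-1}-\tfrac{\Delta^2}{n(n-1)}\bigr)$ and is likewise independent of $(i,j)$, so summing against the weights produces $\beta(X_n^{\wt})$ by definition \eqref{eqn:alpha-beta}. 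This yields the displayed closed form, and since $\Delta^1(\lambda,\mu)$ and $\Delta^2(\lambda,\mu)$ depend only on $m_1(\lambda)$ and $m_1(\mu)$, the claim that $\E_{\lambda,\mu}[X_n^{\wt}]$ depends only on $n, m_1(\lambda), m_1(\mu)$ is immediate.

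Next I would read off \eqref{eqn:gen-inv-first-moment-Bn}: averaging over all of $B_n$ amounts to averaging over all conjugacy classes weighted by $z^{-1}_{\lambda,\mu}$, and by the vanishing condition \eqref{eqn:vanishing-condition} (a corollary of Lemma~\ref{lem:symm-Bnconjugacyclasses}) we have $\sum_{(\lambda,\mu)\vdash n} z^{-1}_{\lambda,\mu}\Delta^k(\lambda,\mu)=0$ for all $k$, so every $\Delta$-term drops out and only $\tfrac{\alpha(X_n^{\wt})}{2}$ survives. Alternatively one can observe directly that $\E_{B_n}[\inv_{ij}]=\tfrac12$ by a symmetry argument. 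Then I would define
\[
f_n(a_1,b_1):=\frac{\alpha(X_n^{\wt})}{2}\cdot\frac{(a_1-b_1)-(a_1^2-b_1^2)}{n(n-1)}-\frac{\beta(X_n^{\wt})}{2(n-2)}\left(\frac{a_1-b_1}{n-1}-\frac{a_1^2-b_1^2}{n(n-1)}\right),
\]
which by construction is a polynomial of degree at most $2$ in $a_1,b_1$ for fixed $n$, and satisfies $\E_{\lambda,\mu}[X_n^{\wt}]=\tfrac{\alpha(X_n^{\wt})}{2}+f_n(m_1(\lambda),m_1(\mu))$, giving \eqref{eqn:1stmoment-conjclass-wholegp}. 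Finally, \eqref{eqn:vanishing-condition-fpoly} follows because $f_n$ is an $\mathbb R$-linear combination of $\Delta^1(\lambda,\mu)$ and $\Delta^2(\lambda,\mu)$ (with coefficients depending only on $n$), and each of these has $z^{-1}_{\lambda,\mu}$-weighted sum zero over bi-partitions of $n$ by \eqref{eqn:vanishing-condition}; alternatively, \eqref{eqn:vanishing-condition-fpoly} is forced by \eqref{eqn:1stmoment-conjclass-wholegp} together with \eqref{eqn:gen-inv-first-moment-Bn} and the fact that the $z^{-1}_{\lambda,\mu}$-weighted average of $\E_{\lambda,\mu}[X_n^{\wt}]$ over all classes is exactly $\E_{B_n}[X_n^{\wt}]$.

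There is essentially no hard analytic step here; the content is bookkeeping. The one place that requires genuine care is matching the normalization conventions: the sums defining $\alpha(X_n^{\wt})$ and $\beta(X_n^{\wt})$ in \eqref{eqn:alpha-beta} range over different index sets (all ordered pairs $i,j\in[n]$ versus pairs with $i<j$), while $X_n^{\wt}$ itself is indexed by $i<j$, so I would be explicit that $\sum_{i<j}\wt_n(i,j)=\tfrac12\alpha(X_n^{\wt})$ only if $\wt_n$ is treated as symmetric, and otherwise interpret $\alpha$ as literally $\sum_{i,j\in[n]}\wt_n(i,j)$ as written — and verify that the factor of $2$ in Proposition~\ref{ij-indicator}'s leading $\tfrac12$ combines correctly to produce the stated $\tfrac{\alpha}{2}$. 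The main obstacle, such as it is, is simply doing this reconciliation transparently so the reader sees why $\alpha$ and $\beta$ are the natural invariants; once the conventions are pinned down, the algebra is a one-line regrouping identical in spirit to the end of the proof of Proposition~\ref{ij-indicator}.
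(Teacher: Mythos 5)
Your proposal is correct and follows essentially the same route as the paper: linearity of expectation applied to Proposition~\ref{ij-indicator}, the vanishing condition \eqref{eqn:vanishing-condition} from Lemma~\ref{lem:symm-Bnconjugacyclasses} for the statement over all of $B_n$, and defining $f_n$ as the difference $\E_{\lambda,\mu}[X_n^{\wt}]-\E_{B_n}[X_n^{\wt}]$. Your remark about reconciling the index sets in \eqref{eqn:alpha-beta} (all pairs versus $i<j$) is a fair point of care that the paper's own proof glosses over, but it does not change the argument.
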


\begin{proof}
Proposition~\ref{ij-indicator} implies
\[ \E_{\lambda, \mu}[\inv_{i,j}]=\frac{1}{2}\left(1 +
\frac{\Delta^1(\lambda,\mu)-\Delta^2(\lambda, \mu)}{n(n-1)} \right)
-\frac{j-i-1}{2(n-2)}\left( 
\frac{\Delta^1(\lambda,\mu)}{n-1}- 
\frac{\Delta^2(\lambda,\mu)}{n(n-1)}
\right).\]
The expression  for 
$\E_{\lambda, \mu}[X_n^{\wt}]$ follows by linearity since we have  
\[ \E_{\lambda, \mu}[X_n^{\wt}]
=\sum_{i,j\in [\pm n], i<j} \wt(i,j)\, \E_{\lambda, \mu}[\inv_{i,j}].\]
Because of  \eqref{eqn:vanishing-condition}, the  statement \eqref{eqn:gen-inv-first-moment-Bn}  about the first moment on all of $B_n$   now follows from the observation that 
for any statistic $X$ on $B_n$, we have 
\[\E_{B_n}[X]=\sum_{(\lambda,\mu)\vdash n} z^{-1}_{\lambda,\mu} \E_{\lambda,\mu}[X]. \qedhere\]
Setting $f_n(m_1(\lambda), m_1(\mu))=\E_{\lambda, \mu}[X_n^{\wt}]-\E_{B_n}[X_n^{wt}]$ then results in a polynomial satisfying~\eqref{eqn:1stmoment-conjclass-wholegp} and~\eqref{eqn:vanishing-condition-fpoly}. 
\end{proof}

The following corollary is clear.
\begin{corollary}\label{cor:stable-moments} 
 Let $X_n^{\wt}$ be a weighted inversion statistic on $B_n$. If $m_1(\lambda)=m_1(\mu)$,  then
 \[\E_{\lambda, \mu}[X_n^{\wt}]=\E_{B_n}[X_n^{\wt}].\]
 In particular, this holds when $C_{\lambda,\mu}$ contains no cycles of length $1$ or when $\lambda=\mu$.
\end{corollary}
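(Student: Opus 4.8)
The plan is to read this off directly from the explicit formula established in Theorem~\ref{thm:generalinversion}. Recall that theorem gives
\[
\E_{\lambda,\mu}[X_n^{wt}]=\frac{\alpha(X_n^{wt})}{2}\left(1+\frac{\Delta^1(\lambda,\mu)-\Delta^2(\lambda,\mu)}{n(n-1)}\right)-\frac{\beta(X_n^{wt})}{2(n-2)}\left(\frac{\Delta^1(\lambda,\mu)}{n-1}-\frac{\Delta^2(\lambda,\mu)}{n(n-1)}\right),
\]
and that $\E_{B_n}[X_n^{wt}]=\alpha(X_n^{wt})/2$ by~\eqref{eqn:gen-inv-first-moment-Bn}. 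So the first step is to observe that, by Definition~\ref{def:Delta-biptn}, the hypothesis $m_1(\lambda)=m_1(\mu)$ forces $\Delta^k(\lambda,\mu)=m_1(\lambda)^k-m_1(\mu)^k=0$ for every $k\ge 1$, in particular for $k=1$ and $k=2$.

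The second step is simply to substitute $\Delta^1(\lambda,\mu)=\Delta^2(\lambda,\mu)=0$ into the displayed formula: the parenthetical factor multiplying $\alpha(X_n^{wt})/2$ becomes $1$, and the entire $\beta(X_n^{wt})$ term vanishes, leaving $\E_{\lambda,\mu}[X_n^{wt}]=\alpha(X_n^{wt})/2=\E_{B_n}[X_n^{wt}]$. This proves the main assertion.

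For the two special cases: if $C_{\lambda,\mu}$ contains no cycles of length $1$, then by definition $m_1(\lambda)=0=m_1(\mu)$, so the hypothesis is met; and if $\lambda=\mu$, then $m_1(\lambda)=m_1(\mu)$ trivially. Hence both follow from the general statement. There is essentially no obstacle here — the only thing to verify is the vanishing of the $\Delta$-terms, which is immediate from the definition — so the proof is a one-line substitution into Theorem~\ref{thm:generalinversion}; if desired, one could instead phrase it via~\eqref{eqn:1stmoment-conjclass-wholegp} by noting that the polynomial $f_n(a_1,b_1)$ produced there is divisible (in the appropriate sense) by $a_1-b_1$ and therefore vanishes on the diagonal, but the direct substitution is cleaner.
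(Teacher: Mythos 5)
Your proposal is correct and matches the paper's intent exactly: the paper states this corollary as ``clear'' from Theorem~\ref{thm:generalinversion}, and your substitution of $\Delta^1(\lambda,\mu)=\Delta^2(\lambda,\mu)=0$ into that formula, together with \eqref{eqn:gen-inv-first-moment-Bn}, is precisely the intended one-line argument. The handling of the two special cases is also right.
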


\begin{remark}
The analogous analysis for weighted inversion statistics in $\mathfrak{S}_n$ appears in \cite[Proposition~6.2]{GRWCPermutationStatistics}. There the conclusion about the first moment on the entire group used character computations, see \cite[Lemma~6.1]{GRWCPermutationStatistics}. In contrast, \eqref{eqn:1stmoment-conjclass-wholegp} follows simply from the symmetry in the distribution of conjugacy classes observed in Lemma~\ref{lem:symm-Bnconjugacyclasses} and the resulting vanishing condition \eqref{eqn:vanishing-condition}.
\end{remark}

\subsection{Formulas for means on conjugacy classes of $B_n$}\label{sec:4.2}

We now apply our results from the previous section to establish explicit formulas for the descent and inversion statistics in conjugacy classes of the hyperoctahedral group. We consider a few auxiliary statistics needed to calculate $\des_B$ and $\inv_B$, and then specialising Theorem~\ref{thm:generalinversion} in suitable ways will allow us to establish these formulas. 

For any $i\in [n]$, define $\inv_{-i,i}$ to be the indicator function for a permutation $\omega$ satisfying $\omega(i)<\omega(-i)$, which can be viewed as the pair $(-i,i)$ being an inversion. Note that this is also equivalent to $\omega(i)<0$. We now establish a formula for the mean of $\inv_{-i,i}$ on any conjugacy class and then use it to derive a corresponding expression for $\des_B$.

\begin{proposition}\label{-ii-inv}
For any $i\in [n]$, we have that
\[\E_{\lambda,\mu}[\inv_{-i,i}]=\pr_{\lambda,\mu}[\inv_{-i,i}=1]=\frac{1}{2}-\frac{\Delta^1(\lambda,\mu)}{2n}.\]
\end{proposition}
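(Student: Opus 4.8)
The plan is to compute $\pr_{\lambda,\mu}[\inv_{-i,i}=1]$ directly, exploiting the fact that the event $\{\inv_{-i,i}=1\}$ is exactly the event $\{\omega(i)<0\}$, i.e., the element $i$ lies in an \emph{odd} cycle of $\omega$ (recall the color of $i$ determines the sign of $\omega(i)$, so $\omega(i)<0$ precisely when the cycle through $i$ has odd color, in the $r=2$ convention). Thus $\pr_{\lambda,\mu}[\inv_{-i,i}=1]$ is simply the probability that, for a uniformly random element of $C_{\lambda,\mu}$, a fixed point $i\in[n]$ lands in an odd cycle. By symmetry over the choice of $i$ (conjugating by a cycle through $[n]\setminus\{i,i'\}$ carries the event for $i$ to the event for $i'$), this probability is $\frac{1}{n}\,\mathbb{E}_{\lambda,\mu}\!\left[\#\{i\in[n]:\omega(i)<0\}\right]$, which equals $\frac{1}{n}$ times the expected total length of all odd cycles.

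The next step is to observe that in $C_{\lambda,\mu}$ the total length of odd cycles is \emph{deterministic}: every element has cycle type $(\lambda,\mu)$, so the sum of the lengths of its odd cycles is exactly $|\mu| = \sum_i i\cdot m_i(\mu)$, the integer partitioned by $\mu$. Hence $\pr_{\lambda,\mu}[\inv_{-i,i}=1] = |\mu|/n$. It then remains to rewrite $|\mu|/n$ in the claimed form $\tfrac12 - \tfrac{\Delta^1(\lambda,\mu)}{2n}$. Since $|\lambda|+|\mu| = n$, we have $|\mu| = \tfrac{n}{2} - \tfrac{|\lambda|-|\mu|}{2}$, so $|\mu|/n = \tfrac12 - \tfrac{|\lambda|-|\mu|}{2n}$. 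This requires $\Delta^1(\lambda,\mu) = m_1(\lambda) - m_1(\mu)$ to equal $|\lambda|-|\mu|$, which is \emph{not} true in general — so the statement as literally printed would need $|\lambda|$ and $|\mu|$ rather than $m_1$. Assuming the intended reading uses the sizes (or equivalently that I should reinterpret $\Delta^1$ accordingly), the identity follows immediately; I would flag that the quantity appearing is $|\lambda|-|\mu|$ and carry that through.

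Concretely, I would structure the proof as: (1) note $\inv_{-i,i}=1 \iff \omega(i)<0 \iff i$ is in an odd cycle; (2) use a conjugation/symmetry argument (as in the proofs of Lemma~\ref{1cycles} and Lemma~\ref{omega3}, conjugating by a permutation of $[n]\setminus\{i\}$, or directly by transpositions $(j,k)$ with $j,k\neq i$) to show $\pr_{\lambda,\mu}[\omega(i)<0]$ is independent of $i$; (3) sum over $i\in[n]$ to get $n\cdot\pr_{\lambda,\mu}[\inv_{-i,i}=1] = \mathbb{E}_{\lambda,\mu}[\#\{i:\omega(i)<0\}]$; (4) observe this count is constant on $C_{\lambda,\mu}$ and equals $|\mu|$; (5) substitute $|\mu| = (n - (|\lambda|-|\mu|))/2$ and simplify. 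Alternatively, one could bypass symmetry entirely: $\pr_{\lambda,\mu}[\inv_{-i,i}=1]$ is the probability a specified element is in an odd cycle, and summing over a transversal of the conjugation action directly gives $|\mu|/n$ by a counting argument with $z_{\lambda,\mu}$ via Lemma~\ref{lem:classsize}.

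The main obstacle — really the only subtle point — is the bookkeeping between $\Delta^1(\lambda,\mu)=m_1(\lambda)-m_1(\mu)$ as \emph{defined} in Definition~\ref{def:Delta-biptn} and the quantity $|\lambda|-|\mu|$ that the computation actually produces; the argument itself (symmetry plus the observation that odd-cycle total length is fixed on a conjugacy class) is short and clean. I expect one must either read $m_1$ here as shorthand for the part-sum in the relevant degenerate situation, or accept that the correct general formula reads $\tfrac12 - \tfrac{|\lambda|-|\mu|}{2n}$; everything downstream (the descent formula derived from $\des_B = \sum_i \inv_{-i,i}\cdot[\text{something}] + \dots$, via $\des_B(\omega) = |\{i\in\{0\}\cup[n-1]: \omega(i)>\omega(i+1)\}|$) then goes through by linearity of expectation combined with Proposition~\ref{ij-indicator}.
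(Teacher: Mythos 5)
Your reduction breaks at the very first step: the equivalence $\omega(i)<0 \iff i$ lies in an odd cycle is false, and everything downstream depends on it. In the cycle-notation convention of Section~\ref{sec:Bn_preliminaries}, a cycle is odd when it contains an \emph{odd number} of negative entries, and $\omega(i)<0$ holds precisely when the entry following $i$ in its cycle carries a negative sign --- one particular sign, not the parity of the whole cycle. For instance, for the odd $3$-cycle $\omega=(1,-2,3)$ we have $\omega(1)=-2<0$ but $\omega(2)=3>0$ and $\omega(3)=1>0$, even though $1,2,3$ all lie in an odd cycle. Consequently your step (4) is also false: $\#\{i:\omega(i)<0\}$ is the total number of negative signs in the cycle notation, which is \emph{not} constant on $C_{\lambda,\mu}$ (an odd $3$-cycle may carry one or three minus signs) and is not $|\mu|$. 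A concrete check: in $B_2$ the class $C_{\emptyset,(2)}$ consists of $[-2,1]$ and $[2,-1]$; each has exactly one $i$ with $\omega(i)<0$, so $\pr_{\emptyset,(2)}[\omega(1)<0]=\tfrac12$, in agreement with the stated formula (here $m_1(\lambda)=m_1(\mu)=0$), whereas your $|\mu|/n$ would give $1$. So the paper's statement with $\Delta^1(\lambda,\mu)=m_1(\lambda)-m_1(\mu)$ is correct as printed, and your proposed ``correction'' to $|\lambda|-|\mu|$ is the error --- the discrepancy you flagged was a symptom of the faulty first step, not of a typo in the proposition.

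The correct argument is the conditioning you already use elsewhere: only length-one cycles pin down the sign of $\omega(i)$. Partition $C_{\lambda,\mu}$ into $\{\omega(i)=i\}$, $\{\omega(i)=-i\}$, and the rest. By Lemma~\ref{1cycles} the first two events have probabilities $m_1(\lambda)/n$ and $m_1(\mu)/n$ and contribute $0$ and $1$ respectively to $\pr_{\lambda,\mu}[\omega(i)<0]$; on the remainder, conjugation by the odd $1$-cycle $(-i)$ is an involution of the class exchanging $\{\omega(i)=k\}$ with $\{\omega(i)=-k\}$ for $|k|\neq i$, so exactly half of that set satisfies $\omega(i)<0$. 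Summing gives
\[
\pr_{\lambda,\mu}[\omega(i)<0]=\frac{m_1(\mu)}{n}+\frac{1}{2}\left(1-\frac{m_1(\lambda)}{n}-\frac{m_1(\mu)}{n}\right)=\frac{1}{2}-\frac{\Delta^1(\lambda,\mu)}{2n},
\]
which is the paper's proof. Your symmetry observation in steps (2)--(3) is fine but does no work here, since it only replaces one unknown probability by an average of equal unknowns.
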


\begin{proof}
    In this case, we define the following partition of $C_{\lambda,\mu}$:
        \begin{align*}
            \Omega_{1+}^{i} & =\{\omega\mid \omega(i)=i \}, \\
            \Omega_{1-}^{i} & = \{\omega\mid \omega(i)=-i \},\\
            \Omega_{2\phantom-}^i & = C_{\lambda,\mu}\setminus (\Omega_{1+}^i\cup \Omega_{1-}^i).
        \end{align*}
    Using Lemma~\ref{1cycles},
        \begin{align*}
            \pr_{\lambda,\mu}[\Omega_{1+}^i \cap \{\inv_{-i,i}=1\}] & =0=\frac{1}{2}\pr_{\lambda,\mu}[\Omega_{1+}^i]-\frac{{m_1(\lambda)}}{2n}, \\
            \pr_{\lambda,\mu}[\Omega_{1+}^i \cap \{\inv_{-i,i}=1\}] & =\pr_{\lambda,\mu}[\Omega_{1+}^i]=\frac{1}{2}\pr_{\lambda,\mu}[\Omega_{1-}^i]+\frac{{m_1(\mu)}}{2n}.
        \end{align*}
    Conjugation by $(-i)$ on $\Omega_2^i$ interchanges the sets \[\{\omega\in \Omega_2^i\mid \omega(i)=k,\omega(-i)=-k\} \quad \text{ and } \quad \{\omega\in \Omega_2^i\mid \omega(i)=-k,\omega(-i)=k\},\] and hence, \[\pr_{\lambda,\mu}[\Omega_2^i\cap \{\inv_{-i,i}=1\}]=\frac{1}{2}\pr_{\lambda,\mu}[\Omega_2^i].\]
    The result now follows by adding up these three terms.
\end{proof}

\begin{theorem}\label{thm:mean-des}
The mean of $\des_B$ on a conjugacy class $C_{\lambda,\mu}$ of $B_n$ is given by
    \[\mathbb{E}_{\lambda,\mu}[\des_B]=\frac{n}{2} - \frac{\Delta^2(\lambda,\mu)}{2n}.\]
\end{theorem}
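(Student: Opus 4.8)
The plan is to reduce $\des_B$ to a weighted inversion statistic plus a correction term, and then apply Theorem~\ref{thm:generalinversion} and Proposition~\ref{-ii-inv}. Recall from \cite[Proposition~8.1.2]{BB} that
\[
\des_B(\omega)=|\{i\in\{0\}\cup[n-1]\mid \omega(i)>\omega(i+1)\}|
\]
with $\omega(0)=0$. I would split off the $i=0$ term: the contribution from $i=0$ is the indicator that $\omega(1)<0$, which is exactly $\inv_{-1,1}$ in the notation of Proposition~\ref{-ii-inv}. For $1\le i\le n-1$, the term $[\omega(i)>\omega(i+1)]$ is the indicator that $(i,i+1)$ is an inversion in the sense of Section~\ref{sec:4.2}; since $i<i+1$ are both positive, this is precisely $\inv_{i,i+1}$. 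Hence
\[
\des_B=\inv_{-1,1}+\sum_{i=1}^{n-1}\inv_{i,i+1}.
\]

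So $\sum_{i=1}^{n-1}\inv_{i,i+1}$ is a weighted inversion statistic $X_n^{\wt}$ with $\wt_n(i,j)=1$ when $j=i+1$ and $0$ otherwise. First I would compute the two constants from \eqref{eqn:alpha-beta}: since each nonzero weight is $1$ on a consecutive pair and $j-i-1=0$ for those pairs, we get $\alpha(X_n^{\wt})=n-1$ and $\beta(X_n^{\wt})=0$. Plugging into Theorem~\ref{thm:generalinversion} (the $\beta$-term vanishes) gives
\[
\E_{\lambda,\mu}\Bigl[\sum_{i=1}^{n-1}\inv_{i,i+1}\Bigr]
=\frac{n-1}{2}\left(1+\frac{\Delta^1(\lambda,\mu)-\Delta^2(\lambda,\mu)}{n(n-1)}\right)
=\frac{n-1}{2}+\frac{\Delta^1(\lambda,\mu)-\Delta^2(\lambda,\mu)}{2n}.
\]
Then by linearity of expectation and Proposition~\ref{-ii-inv}, $\E_{\lambda,\mu}[\inv_{-1,1}]=\tfrac12-\tfrac{\Delta^1(\lambda,\mu)}{2n}$, so
\[
\E_{\lambda,\mu}[\des_B]=\frac{1}{2}-\frac{\Delta^1(\lambda,\mu)}{2n}
+\frac{n-1}{2}+\frac{\Delta^1(\lambda,\mu)-\Delta^2(\lambda,\mu)}{2n}
=\frac{n}{2}-\frac{\Delta^2(\lambda,\mu)}{2n},
\]
where the $\Delta^1$ terms cancel. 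This is the claimed formula.

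The steps here are all routine once the decomposition $\des_B=\inv_{-1,1}+\sum_{i}\inv_{i,i+1}$ is in hand, so the only real point of care is making sure that decomposition is correctly justified against the conventions of Section~\ref{sec:4.2}. In particular I would be careful that $\inv_{i,i+1}$ as defined there (indicator of $\omega(i)>\omega(i+1)$ with the $[\pm n]\cup\{0\}$ ordering) matches the $i$th term of the $\des_B$ formula for $1\le i\le n-1$, and that the $i=0$ term is genuinely $\inv_{-1,1}$ (equivalently $[\omega(1)<0]$) rather than some shifted variant — the remark following Proposition~\ref{ij-indicator} about the $j=-i$ case is exactly the relevant subtlety. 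Everything else is arithmetic and an appeal to already-established results, so there is no substantive obstacle beyond bookkeeping.
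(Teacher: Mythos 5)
Your proposal is correct and follows the paper's proof essentially verbatim: the same decomposition $\des_B=\inv_{-1,1}+\sum_{i=1}^{n-1}\inv_{i,i+1}$, the same appeal to Proposition~\ref{-ii-inv} for the $i=0$ term and to Theorem~\ref{thm:generalinversion} with $\alpha=n-1$, $\beta=0$ for the rest, and the same cancellation of the $\Delta^1$ terms. Nothing further is needed.
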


\begin{proof}
    We begin by decomposing
    \[\E_{\lambda,\mu}[\des_B] = \E_{\lambda,\mu}[I_{-1,1}] + \E_{\lambda,\mu}\left[\sum_{i=1}^{n-1} \inv_{i,i+1}\right].\]
    The preceding proposition gives the first term. We use Theorem~\ref{thm:generalinversion} for the second term. For the weight function for $\des_B$, we have $\alpha_n(\des_B)=n-1$ and 
     $\beta_n(\des_B)=0$. Direct simplification shows 
     \begin{equation*}
         \begin{split}
             \E_{\lambda,\mu}[\des_B] & =  \frac{1}{2}-\frac{\Delta^1(\lambda,\mu)}{2n}+ \frac{n-1}{2}\left( 1+\frac{\Delta^1(\lambda,\mu)-\Delta^2(\lambda,\mu)}{n(n-1)}\right) \\
             & = \frac{n}{2} - \frac{\Delta^2(\lambda,\mu)}{2n}. \qedhere
         \end{split}
     \end{equation*}
\end{proof}

We now consider $\negative$ on conjugacy classes of $B_n$. Note that a special case for the mean of $\negative$ was previously given in Example \ref{rem:negsum}.

\begin{lemma}\label{lem:neg}
    For fixed $i\in [n]$, $j\in [\pm n]\setminus \{\pm i\}$, we have that \[\pr_{\lambda,\mu}[\omega(i)=j\mid \omega(i)\neq  \pm i]=\frac{1}{2(n-1)}.\]
    Consequently, we have that
    \[\pr_{\lambda,\mu}[\omega(i)=j]= \left(1-\frac{{m_1(\lambda)}}{n}-\frac{{m_1(\mu)}}{n}\right)\cdot \frac{1}{2(n-1)}.\]
\end{lemma}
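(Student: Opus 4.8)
\textbf{Proof proposal for Lemma~\ref{lem:neg}.}

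The plan is to first establish the conditional probability $\pr_{\lambda,\mu}[\omega(i)=j\mid \omega(i)\neq \pm i]=\frac{1}{2(n-1)}$ by a symmetry argument, and then deduce the unconditional probability by the law of total probability together with Lemma~\ref{1cycles}. For the conditional statement, fix $i\in[n]$ and consider the set $S=\{\omega\in C_{\lambda,\mu}\mid \omega(i)\neq\pm i\}$. I want to show that as $j$ ranges over $[\pm n]\setminus\{\pm i\}$, the subsets $S_j=\{\omega\in S\mid \omega(i)=j\}$ all have the same cardinality. Since there are $2(n-1)$ such values of $j$ and the $S_j$ partition $S$, this gives the claimed conditional probability.

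To prove the $S_j$ all have equal size, I would exhibit bijections between them using conjugation, just as in the proofs of Lemmas~\ref{omega3} and \ref{omega12}. Concretely: for two values $j,k\in[\pm n]\setminus\{\pm i\}$ with $|j|\neq|k|$, conjugation by the (uncolored) transposition interchanging the appropriate elements — more carefully, one composes conjugations by suitable elements of $B_n$ of the form $(a,b)$ and sign changes $(-a)$ to send the element $j$ to $k$ while fixing $i$ — gives a bijection $S_j\to S_k$ that preserves cycle type (conjugation always does) and preserves the condition $\omega(i)\notin\{\pm i\}$. The one case requiring a separate argument is relating $S_j$ and $S_{-j}$: here conjugation by the sign change $(-j)$ (the element of $B_n$ negating only $j$) sends $\{\omega\mid \omega(i)=j\}$ to $\{\omega\mid\omega(i)=-j\}$, again preserving cycle type and the constraint on $i$. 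Chaining these bijections shows $|S_j|$ is independent of $j$, which is the main content.

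For the second claim, write $\pr_{\lambda,\mu}[\omega(i)=j]=\pr_{\lambda,\mu}[\omega(i)=j\mid \omega(i)\neq\pm i]\cdot\pr_{\lambda,\mu}[\omega(i)\neq\pm i]$, valid since $\{\omega(i)=j\}\subseteq\{\omega(i)\neq\pm i\}$ when $j\neq\pm i$. Parts (1) and (2) of Lemma~\ref{1cycles} give $\pr_{\lambda,\mu}[\omega(i)=i]=m_1(\lambda)/n$ and $\pr_{\lambda,\mu}[\omega(i)=-i]=m_1(\mu)/n$, so $\pr_{\lambda,\mu}[\omega(i)\neq\pm i]=1-\frac{m_1(\lambda)}{n}-\frac{m_1(\mu)}{n}$, and multiplying by $\frac{1}{2(n-1)}$ yields the stated formula.

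The main obstacle I anticipate is purely bookkeeping: making the conjugation bijections in the first step fully rigorous requires care about which element of $B_n$ to conjugate by in each case (distinguishing $|j|=|k|$ from $|j|\neq|k|$, and handling whether $j,k$ are positive or negative), and verifying in each case that the constraint $\omega(i)\notin\{\pm i\}$ is genuinely preserved — i.e. that the conjugating element does not move $i$ in a way that interferes. This is the same style of argument already used repeatedly in Section~\ref{Bnstatistics}, so the techniques are in hand; only the case analysis needs to be executed cleanly. (One small subtlety worth flagging: one must ensure $S$ is nonempty, but if $S=\emptyset$ the conditional probability statement is vacuous and the formula still reads correctly since then $1-m_1(\lambda)/n-m_1(\mu)/n=0$.)
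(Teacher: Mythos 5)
Your proposal is correct and follows essentially the same route as the paper's proof: both establish that the $2(n-1)$ sets $\{\omega\in C_{\lambda,\mu}\mid\omega(i)=j\}$ for $j\in[\pm n]\setminus\{\pm i\}$ are equinumerous via conjugation by elements fixing $i$ (the paper uses the single $(n-1)$-cycle $(1,\ldots,\widehat{i},\ldots,n)$ together with the sign changes $(-k)$, where you propose transpositions plus sign changes — an immaterial difference), and both then obtain the unconditional probability from Lemma~\ref{1cycles} and the law of total probability.
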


\begin{proof}
    Consider conjugation by the permutation $\tau=(1,2,\ldots,\widehat{i},\ldots,n)$.
When $\omega(i)\in [n]\setminus \{i\}$, 
 \[\tau\omega\tau^{-1}(i)=
 \begin{cases} 
 \omega(i)+1 & \text{ if $\omega(i)\notin \{i-1,n\}$} \\
 i+1 & \text{ if $\omega(i)=i-1$} \\
 1 & \text{ if $\omega(i)=n$}.
 \end{cases}\]
Hence, this conjugation induces bijections among the sets $\{\omega\mid \omega(i)=k\}$ for $k\in [n]\setminus \{i\}$. Similarly, analyzing cases when $\omega(i)\notin [n]\cup \{-i\}$ implies this induces bijections among $\{\omega\mid \omega(i)=-k\}$ for $k\in [n]\setminus \{i\}$. Finally, if we conjugate by the odd $1$-cycle $\tau_k=(-k)$ for $k\in [n]\setminus \{i\}$, we see that when $\omega(i)=k$, \[\tau_k\omega \tau_k(i)=-k.\]
Hence, this induces a bijection between $\{\omega\in C_{\lambda,\mu}\mid \omega(i)=k\}$ and $\{\omega\in C_{\lambda,\mu}\mid \omega(i)=-k\}$.
Then we can decompose $\{\omega\in C_{\lambda,\mu}\mid \omega(i)\neq \pm i\}$ into
 $2(n-1)$ sets of the same size based on the image of $i$, implying 
 \[\pr_{\lambda,\mu}[\omega(i)=j\mid \omega(i)\neq \pm i]=\pr_{\lambda,\mu}[\omega(i)=-j\mid \omega(i)\neq \pm i]=\frac{1}{2(n-1)}.\]
The second claim now follows by using this with Lemma \ref{1cycles} on \[\pr_{\lambda,\mu}[\omega(i)=j]=\pr_{\lambda,\mu}[\omega(i)\neq \pm i]\cdot \pr_{\lambda,\mu}[\omega(i)=j\mid \omega(i)\neq \pm i].\qedhere \]
\end{proof}

\begin{proposition}\label{prop:neg}
The mean of $\negative$ on a conjugacy class $C_{\lambda,\mu}$ of $B_n$ is given by
\[\E_{\lambda,\mu}[\negative]=-\frac{n(n+1)}{4} + \frac{n+1}{4} \Delta^1(\lambda,\mu).\]
\end{proposition}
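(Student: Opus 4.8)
The plan is to compute the mean directly by expanding $\negative$ into indicator functions and applying the probability formulas already established in Lemma~\ref{lem:neg} and Lemma~\ref{1cycles}. Writing $\negative = \sum_{i\in[n]}\sum_{j\in[n]}(-j)\,I_{(i,-j)}$ as in Example~\ref{rem:negsum}, linearity of expectation gives
\[
\E_{\lambda,\mu}[\negative] = \sum_{i\in[n]}\sum_{j\in[n]}(-j)\,\pr_{\lambda,\mu}[\omega(i)=-j].
\]
The key point is that the summand behaves differently when $j=i$ versus $j\neq i$, so I would split the inner sum accordingly. For $j=i$, part (2) of Lemma~\ref{1cycles} gives $\pr_{\lambda,\mu}[\omega(i)=-i]=m_1(\mu)/n$. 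For $j\neq i$, Lemma~\ref{lem:neg} gives the uniform value $\pr_{\lambda,\mu}[\omega(i)=-j]=\bigl(1-\tfrac{m_1(\lambda)+m_1(\mu)}{n}\bigr)\cdot\tfrac{1}{2(n-1)}$, which I will abbreviate as $P$.

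Next I would carry out the arithmetic. Using $\sum_{j\in[n],\,j\neq i}(-j) = -\bigl(\tfrac{n(n+1)}{2}-i\bigr)$, the expression becomes
\[
\E_{\lambda,\mu}[\negative] = \sum_{i\in[n]}\left[-\frac{i\,m_1(\mu)}{n} - P\left(\frac{n(n+1)}{2}-i\right)\right] = -\frac{m_1(\mu)(n+1)}{2} - P\cdot\frac{n(n+1)(n-1)}{2}.
\]
Substituting $P$ and simplifying $P\cdot\tfrac{n(n+1)(n-1)}{2} = \tfrac{n(n+1)}{4} - \tfrac{(m_1(\lambda)+m_1(\mu))(n+1)}{4}$, the whole thing collapses to $-\tfrac{n(n+1)}{4} + \tfrac{n+1}{4}\bigl(m_1(\lambda)-m_1(\mu)\bigr)$, which is exactly $-\tfrac{n(n+1)}{4} + \tfrac{n+1}{4}\Delta^1(\lambda,\mu)$ by Definition~\ref{def:Delta-biptn}.

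This argument is entirely routine once Lemma~\ref{lem:neg} is in hand, so there is no serious obstacle; the only thing requiring care is isolating the diagonal term $j=i$ (which is the sole source of the $\Delta^1$ dependence) and bookkeeping the two sums $\sum i$ and $\sum(\tfrac{n(n+1)}{2}-i)$ correctly. As a sanity check one could verify that averaging over all conjugacy classes with the weights $z_{\lambda,\mu}^{-1}$ kills the $\Delta^1$ term via the vanishing condition~\eqref{eqn:vanishing-condition}, recovering $\E_{B_n}[\negative]=-n(n+1)/4$, and that the special case $m_1(\mu)=0$, $m_1(\lambda)=0$ agrees with the computation in Example~\ref{rem:negsum}.
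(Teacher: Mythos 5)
Your proposal is correct and follows essentially the same route as the paper: expand $\negative$ into the indicators $I_{(i,-j)}$, split off the diagonal term $j=i$ (handled by Lemma~\ref{1cycles}(2)) from the off-diagonal terms (handled by Lemma~\ref{lem:neg}), and sum. The arithmetic checks out and matches the paper's computation line for line.
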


\begin{proof}    
First note  the identities 
\[\sum_{i=1}^n i=\binom{n+1}{2} \quad \text{ and } \quad \sum_{i=1}^n \sum_{j=1, j\ne i}^n j = \sum_{i=1}^n \left(\binom{n+1}{2}-i\right)=(n-1)\binom{n+1}{2}.\]
    Applying Lemmas \ref{1cycles} and \ref{lem:neg}, we find
    \begin{equation*}
    \begin{split}
       \E_{\lambda,\mu}[\negative]
        & = \E_{\lambda,\mu}\left[\sum_{i=1}^n \sum_{j=1}^n (-j)\cdot I_{(i,-j)}(\omega)\right]\\
        & = -\sum_{i=1}^n \left( i\cdot \pr_{\lambda,\mu}[\omega(i)=-i] + \sum_{j\neq i} j\cdot \pr_{\lambda,\mu}[\omega(i)=-j]\right) \\
        & =  -\frac{{m_1(\mu)}}{n}\left(\sum_{i=1}^n i \right)- \left(1-\frac{{m_1(\lambda)}}{n}-\frac{{m_1(\mu)}}{n}\right)\cdot \frac{1}{2(n-1)}\sum_{i=1}^n\sum_{j\neq i} j \\
        &=-m_1(\mu) \frac{n+1}{2}
        -(n-m_1(\lambda)-m_1(\mu))\frac{n+1}{4}\\
        &=-\frac{n(n+1)}{4} + \frac{n+1}{4} \Delta^1(\lambda,\mu).\qedhere
        \end{split}
\end{equation*}
\end{proof}

Using the above result, we now give explicit formulas for the mean of $\inv$ and $\inv_B$ on conjugacy classes of $B_n$.

\begin{theorem}\label{thm:inversions}
The mean of $\inv$ and $\inv_B$ on a conjugacy class $C_{\lambda,\mu}$ of $B_n$ are given by
\begin{equation*}
    \begin{split}
\E_{\lambda,\mu} [\inv]     
&=\frac{n(n-1)}{4} -\frac{n-3}{12}\Delta^1(\lambda,\mu)  -\frac{1}{6} \Delta^2(\lambda,\mu), \\
        \E_{\lambda,\mu} [\inv_B] 
        & 
        = \frac{n^2}{2}-\frac{n}{3}\Delta^1(\lambda,\mu) -\frac{1}{6}\Delta^2(\lambda,\mu).
    \end{split}
\end{equation*}

\end{theorem}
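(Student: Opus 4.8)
The plan is to treat $\inv$ as a weighted inversion statistic and invoke Theorem~\ref{thm:generalinversion}, and then to obtain $\inv_B$ from the identity $\inv_B = \inv - \negative$ of \eqref{eqn:InvA-InvB-Neg} together with Proposition~\ref{prop:neg}.

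First I would handle $\inv$. By \eqref{eq:size2decomposition} we have $\inv = \sum_{i<j\in[n]}\inv_{ij}$, so $\inv = X_n^{\wt}$ with $\wt_n(i,j) = 1$ for $i<j$ and $\wt_n(i,j) = 0$ otherwise. Then $\alpha(\inv) = \sum_{i,j\in[n]}\wt_n(i,j) = \binom{n}{2}$, while $\beta(\inv) = \sum_{i<j}(j-i-1)$ counts the triples $i<k<j$ in $[n]$ and hence equals $\binom{n}{3} = \tfrac{n(n-1)(n-2)}{6}$ (one could instead substitute $d = j-i$ and evaluate $\sum_{d=1}^{n-1}(n-d)(d-1)$). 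Substituting these two values into the formula of Theorem~\ref{thm:generalinversion}, the factor $\beta(\inv)/(2(n-2)) = \tfrac{n(n-1)}{12}$ clears all of the remaining $n-1$ and $n(n-1)$ denominators, and collecting the coefficients of $1$, $\Delta^1(\lambda,\mu)$, and $\Delta^2(\lambda,\mu)$ yields $\E_{\lambda,\mu}[\inv] = \tfrac{n(n-1)}{4} - \tfrac{n-3}{12}\Delta^1(\lambda,\mu) - \tfrac{1}{6}\Delta^2(\lambda,\mu)$.

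Next I would handle $\inv_B$. By \eqref{eqn:InvA-InvB-Neg} and linearity of expectation, $\E_{\lambda,\mu}[\inv_B] = \E_{\lambda,\mu}[\inv] - \E_{\lambda,\mu}[\negative]$. Plugging in the formula just obtained for $\E_{\lambda,\mu}[\inv]$ and the expression $\E_{\lambda,\mu}[\negative] = -\tfrac{n(n+1)}{4} + \tfrac{n+1}{4}\Delta^1(\lambda,\mu)$ from Proposition~\ref{prop:neg}, the constant terms combine as $\tfrac{n(n-1)+n(n+1)}{4} = \tfrac{n^2}{2}$, the $\Delta^1$ coefficients combine as $-\tfrac{n-3}{12} - \tfrac{n+1}{4} = -\tfrac{n}{3}$, and the $\Delta^2$ coefficient is unchanged, giving $\E_{\lambda,\mu}[\inv_B] = \tfrac{n^2}{2} - \tfrac{n}{3}\Delta^1(\lambda,\mu) - \tfrac{1}{6}\Delta^2(\lambda,\mu)$.

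All of the substantive work — the conjugation and involution arguments, the conditional probabilities of Lemmas~\ref{omega3} and \ref{omega12}, and the conjugacy-class symmetry of Lemma~\ref{lem:symm-Bnconjugacyclasses} — is already packaged into Theorem~\ref{thm:generalinversion} and Proposition~\ref{prop:neg}, so I do not expect a genuine obstacle. The one point demanding care is the simplification itself: one must check that the apparent poles at $n = 1, 2$ coming from the denominators $n-1$ and $n-2$ in Theorem~\ref{thm:generalinversion} really cancel — which is precisely why evaluating $\beta(\inv)$ as $\binom{n}{3}$ cleanly is the crucial computational step — so that the final expression is an honest polynomial in $n$ with the stated dependence on $\Delta^1$ and $\Delta^2$.
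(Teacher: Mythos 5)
Your proposal is correct and follows exactly the paper's own proof: the paper also applies Theorem~\ref{thm:generalinversion} with $\alpha(\inv)=\binom{n}{2}$ and $\beta(\inv)=\sum_{i<j}(j-i-1)=\binom{n}{3}$, simplifies, and then obtains $\inv_B$ by subtracting $\E_{\lambda,\mu}[\negative]$ from Proposition~\ref{prop:neg} via \eqref{eqn:InvA-InvB-Neg}. The arithmetic in your simplification checks out.
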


\begin{proof}
    For $\inv$, we apply Theorem \ref{thm:generalinversion}. Observe that this statistic corresponds to $\wt_n(i,j)=1$ for all $1\le i<j\le n$, and 0 otherwise. Then $\alpha(\inv)=\binom{n}{2}$, and  
    a standard binomial identity implies 
    \[\beta(\inv)=\sum_{1\leq i< j\leq n} (j-i-1)=\binom{n}{3}.\]
    Consequently, we obtain 
 \[\E_{\lambda,\mu}[\inv] =\frac{n(n-1)}{4}  \left(1 +
\frac{\Delta^1(\lambda,\mu)-\Delta^2(\lambda, \mu)}{n(n-1)} \right)
-\frac{n(n-1)}{12}\left( 
\frac{\Delta^1(\lambda,\mu)}{n-1}- 
\frac{\Delta^2(\lambda,\mu)}{n(n-1)}
\right),\]
which simplifies to the stated formula. Recalling (\ref{eqn:InvA-InvB-Neg}),
the result for $\E_{\lambda,\mu}[\inv_B]$ follows  by subtracting the expression for $\E_{\lambda,\mu}[\negative]$ in Proposition \ref{prop:neg} from the expression for $\E_{\lambda,\mu}[\inv]$ above. 
\end{proof}

\begin{remark}\label{rem:other-stats} Explicit expressions for the first moment of other statistics on $C_{\lambda,\mu}$, e.g., the major index and the flag major index defined in \cite{AdinBrentiRoichman}, can be obtained in this manner.
\end{remark}

\begin{corollary}\label{cor:inversions}
Let $X$ be any of the following statistics: $\des_B$, $\negative$, $\inv$, or $\inv_B$. Then $\E_{\lambda,\mu}[X]$ coincides on all conjugacy classes where $m_1(\lambda)=m_1(\mu)$. In particular, this holds when all cycles have length at least $2$ or when $\lambda=\mu$.
\end{corollary}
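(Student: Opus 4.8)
The plan is to simply read off the claim from the explicit formulas already established for each of the four statistics. The common feature of Theorems~\ref{thm:mean-des}, \ref{thm:inversions} and Proposition~\ref{prop:neg} is that every first moment $\E_{\lambda,\mu}[X]$ has the shape ``(a function of $n$ alone) plus a linear combination of $\Delta^1(\lambda,\mu)$ and $\Delta^2(\lambda,\mu)$,'' where recall from Definition~\ref{def:Delta-biptn} that $\Delta^k(\lambda,\mu)=m_1(\lambda)^k-m_1(\mu)^k$. Specifically, we have $\E_{\lambda,\mu}[\des_B]=\tfrac n2-\tfrac{1}{2n}\Delta^2(\lambda,\mu)$, $\E_{\lambda,\mu}[\negative]=-\tfrac{n(n+1)}{4}+\tfrac{n+1}{4}\Delta^1(\lambda,\mu)$, $\E_{\lambda,\mu}[\inv]=\tfrac{n(n-1)}{4}-\tfrac{n-3}{12}\Delta^1(\lambda,\mu)-\tfrac16\Delta^2(\lambda,\mu)$, and $\E_{\lambda,\mu}[\inv_B]=\tfrac{n^2}{2}-\tfrac n3\Delta^1(\lambda,\mu)-\tfrac16\Delta^2(\lambda,\mu)$.

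First I would observe that $m_1(\lambda)=m_1(\mu)$ immediately forces $\Delta^k(\lambda,\mu)=m_1(\lambda)^k-m_1(\mu)^k=0$ for every nonnegative integer $k$, in particular for $k=1,2$. Substituting $\Delta^1=\Delta^2=0$ into each of the four displayed formulas collapses the $(\lambda,\mu)$-dependence entirely, leaving $\E_{\lambda,\mu}[\des_B]=n/2$, $\E_{\lambda,\mu}[\negative]=-n(n+1)/4$, $\E_{\lambda,\mu}[\inv]=n(n-1)/4$, and $\E_{\lambda,\mu}[\inv_B]=n^2/2$, each a function of $n$ only. Hence for a fixed $n$ the value is the same on every conjugacy class $C_{\lambda,\mu}$ with $m_1(\lambda)=m_1(\mu)$, which is the assertion.

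Finally I would dispatch the two special cases. If every cycle of $(\omega,\tau)$ has length at least $2$, then the cycle type $(\lambda,\mu)$ has no parts equal to $1$ in either component, i.e.\ $m_1(\lambda)=m_1(\mu)=0$, so the hypothesis $m_1(\lambda)=m_1(\mu)$ holds; and if $\lambda=\mu$ then trivially $m_1(\lambda)=m_1(\mu)$. There is no real obstacle here: the entire content was already front-loaded into the derivations of the closed forms, and this corollary is purely the act of setting the $\Delta$-terms to zero. (One could additionally remark that, in the language of Theorem~\ref{thm:generalinversion}, $\des_B$, $\inv$, and $\inv_B$ all differ from weighted inversion statistics only by the diagonal terms $\inv_{-i,i}$, so Corollary~\ref{cor:stable-moments} together with Propositions~\ref{-ii-inv} and \ref{prop:neg} gives an alternative route, but the direct substitution is cleanest.)
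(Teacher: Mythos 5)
Your proposal is correct and matches the paper's (implicit) argument: the corollary is stated as an immediate consequence of the explicit formulas in Theorem~\ref{thm:mean-des}, Proposition~\ref{prop:neg}, and Theorem~\ref{thm:inversions}, all of which depend on $(\lambda,\mu)$ only through $\Delta^1(\lambda,\mu)$ and $\Delta^2(\lambda,\mu)$, and these vanish when $m_1(\lambda)=m_1(\mu)$. Your handling of the two special cases is also exactly what is intended.
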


The statistics $\inv$, $\inv_B$, and $\des_B$ have degree aat most $2$, so by Theorem \ref{thm:IndependenceHigherMoments}, we expect their means to coincide on all $C_{\lambda,\mu}$ where $\lambda$ and $\mu$ have no parts of size $1$ or $2$. However, the above corollary shows a stronger statement. One natural question is whether $\inv_B$ and $\des_B$ actually have degree $1$. We show that in general, this is not the case. 

\begin{theorem}\label{thm:size2}
    For $n\geq 3$, the statistics $\inv$, $\inv_B$, and $\des_B$ on $B_n$ have degree $2$.
\end{theorem}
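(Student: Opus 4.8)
The plan is to show that none of $\inv$, $\inv_B$, $\des_B$ can be written as a linear combination of indicator functions $I_{(K,\kappa)}$ of partial colored permutations of size at most $1$, while Example~\ref{ex:des-inv-negsum-constraints} already exhibits each as a combination of size-$\le 2$ indicators. Since degree is well-defined, it suffices to rule out degree $0$ and degree $1$. Degree $0$ is immediate: a degree-$0$ statistic is a constant multiple of $I_\emptyset = 1$, hence constant, and each of the three statistics is non-constant for $n \ge 3$ (e.g.\ the identity has no inversions and no descents of the relevant form, whereas other elements do). So the real content is excluding degree exactly $1$.

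To exclude degree $1$, I would argue by contradiction using the mean computations already established. Suppose $X$ has degree $1$, so $X = c_\emptyset + \sum_{i,j\in[n]}\sum_{c\in\mathbb Z_2} c_{(i,j^c)} I_{(i,j^c)}$ for scalars $c_{(\cdot)}$. By Lemma~\ref{lem:HigherMomentsIndependence} and linearity of expectation (exactly as in the proof of Theorem~\ref{thm:IndependenceHigherMoments}, specialized to $k=1$, $m=1$), the mean $\E_{\lambda,\mu}[X]$ on any conjugacy class with no cycles of length $1$ is independent of $(\lambda,\mu)$: every acyclic size-$1$ indicator contributes $\frac{1}{(n-1)\cdot 2}$ and the non-acyclic ones (those of the form $(i,i^0)$ or $(i,i^1)$, i.e.\ $(i,i)$ or $(i,-i)$) contribute $0$, none of which depends on the class. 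Hence a degree-$1$ statistic has mean constant across all conjugacy classes $C_{\lambda,\mu}$ with $m_1(\lambda)=m_1(\mu)=0$. But Theorem~\ref{thm:mean-des} gives $\E_{\lambda,\mu}[\des_B] = \frac n2 - \frac{\Delta^2(\lambda,\mu)}{2n}$, and one can choose two bi-partitions of $n$, both with no $1$-cycles, having different values of $\Delta^2(\lambda,\mu) = m_1(\lambda)^2 - m_1(\mu)^2$? That fails precisely because $m_1 = 0$ forces $\Delta^2 = 0$. So the $k=1$ moment alone is \emph{not} enough — this is the main obstacle, and it is exactly the phenomenon the paper flags (the converse of Theorem~\ref{thm:IndependenceHigherMoments} fails for first moments).

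The resolution is to use a \emph{higher} moment: a degree-$1$ statistic $X$ has $X^2$ of degree at most $2$, so by Theorem~\ref{thm:IndependenceHigherMoments} its second moment $\E_{\lambda,\mu}[X^2]$ is constant across all conjugacy classes with no cycles of length $1,2$; equivalently, the variance is constant there. The plan is therefore to compute (or compare across two suitably chosen classes) the variance of $\des_B$ — or more efficiently, to exhibit two conjugacy classes $C_{\lambda,\mu}$ and $C_{\lambda',\mu'}$ of the same $B_n$, both with no cycles of length $\le 2$, on which the means of $\des_B$ \emph{already} differ, which is cleaner. Since the means depend only on $\Delta^2$ and $\Delta^2 \equiv 0$ when there are no $1$-cycles, I instead compare means on classes that \emph{do} have $1$-cycles but no $2$-cycles: a degree-$1$ statistic still has mean independent of the class among all classes with no $1$-cycles, but nothing forces constancy once $1$-cycles are allowed — however the cleanest route is: if $X$ has degree $1$ then $\E_{\lambda,\mu}[X]$, as computed from the size-$1$ expansion, is an \emph{affine} function of $m_1(\lambda)$ and $m_1(\mu)$ (the non-acyclic indicators $(i,\pm i)$ contribute probabilities $m_1(\lambda)/n$ and $m_1(\mu)/n$ by Lemma~\ref{1cycles}, weighted by their coefficients). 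But Theorem~\ref{thm:mean-des} shows $\E_{\lambda,\mu}[\des_B]$ contains the term $-\frac{\Delta^2}{2n} = -\frac{m_1(\lambda)^2 - m_1(\mu)^2}{2n}$, which is genuinely quadratic in $(m_1(\lambda),m_1(\mu))$ and not affine — pick, e.g., $n$ large and the three classes with $(m_1(\lambda),m_1(\mu))$ equal to $(0,0)$, $(1,0)$, $(2,0)$ (all realizable with the remaining cells filled by a single large cycle, hence no $2$-cycles issue is even needed) to see affineness fails. This contradiction rules out degree $1$ for $\des_B$. For $\inv$ and $\inv_B$, I would run the identical argument using the formulas in Theorem~\ref{thm:inversions}, whose right-hand sides likewise contain the quadratic term $-\frac16\Delta^2(\lambda,\mu)$; the $\Delta^1$ term is affine and harmless, but the $\Delta^2$ term is not, so the same three-class comparison forces a contradiction with degree $1$. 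Combining with the degree-$0$ exclusion completes the proof that all three statistics have degree exactly $2$ for $n \ge 3$. The one point requiring care is verifying that a degree-$1$ expansion really does produce an affine-in-$(m_1(\lambda),m_1(\mu))$ mean on \emph{all} conjugacy classes (not just those without short cycles): this needs the exact probabilities $\pr_{\lambda,\mu}[\omega(i)=i]$, $\pr_{\lambda,\mu}[\omega(i)=-i]$, and $\pr_{\lambda,\mu}[\omega(i)=j]$ for $j\ne\pm i$, all of which are supplied by Lemma~\ref{1cycles} and Lemma~\ref{lem:neg}, and each is affine in $m_1(\lambda),m_1(\mu)$; summing over the size-$1$ expansion with constant coefficients preserves affineness. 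That bookkeeping, rather than any deep idea, is the crux.
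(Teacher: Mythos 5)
Your proposal is correct in substance but takes a genuinely different route from the paper. The paper's proof is a direct, self-contained evaluation argument: it exhibits two triples of permutations $P_1,P_2\subseteq B_n$ whose restrictions to $[3]$ form the same multiset of (position, value) pairs, so that any expansion $c+\sum c_{(i,j)}I_{(i,j)}$ forces $\sum_{\omega\in P_1}X(\omega)=\sum_{\omega\in P_2}X(\omega)$, and then checks by hand that $\inv$ and $\des_B$ violate this identity ($4\neq 5$ and $2\neq 4$). Your argument instead observes that for any statistic of degree at most $1$, the exact probabilities $\pr_{\lambda,\mu}[\omega(i)=j]$ from Lemmas~\ref{1cycles} and~\ref{lem:neg} are affine in $(m_1(\lambda),m_1(\mu))$ with coefficients independent of the class, so the first moment of such a statistic is affine in $(m_1(\lambda),m_1(\mu))$ across \emph{all} conjugacy classes; this contradicts the genuinely quadratic $\Delta^2(\lambda,\mu)$ term in Theorems~\ref{thm:mean-des} and~\ref{thm:inversions}. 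Your approach is heavier (it relies on the full machinery of Section~\ref{Bnstatistics}) but more conceptual: it explains \emph{why} degree $1$ fails and would rule out degree $1$ for any weighted inversion statistic whose mean has a nonzero $\Delta^2$ coefficient. Two small points of bookkeeping: your digression through classes without $1$-cycles is a dead end you correctly abandon, but the final argument does not need the second-moment detour you sketch before it; and your specific witness triple $(m_1(\lambda),m_1(\mu))=(0,0),(1,0),(2,0)$ is not realizable at $n=3$ (where $(2,0)$ would force a third part of size $1$), so for $n=3$ you should substitute, e.g., $(0,0),(1,0),(3,0)$ via $\lambda=(3),(2,1),(1,1,1)$ with $\mu=\emptyset$; the resulting values $0,-\tfrac{1}{2n},-\tfrac{9}{2n}$ for the $\Delta^2$ contribution are still non-affine, so the contradiction persists. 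With that repair the proof is complete for all $n\geq 3$.
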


\begin{proof}
    Example~\ref{ex:des-inv-negsum-constraints} shows these statistics have degree at most $2$. To rule out the case of degree $1$, we show that these statistics cannot be expressed in the form
    \begin{equation}\label{eq:size1}
        c+\sum_{i\in [n],j\in [\pm n]} c_{(i,j)}I_{(i,j)}.
    \end{equation}
    where $c,c_{(i,j)}\in \mathbb{R}$. 
    Note that here $I_{(i,j)}$ is the indicator function for the partial colored permutation $(i,j)$ as defined in Section 3. 
    
    We first consider $\inv$ and $\inv_B$. Observe that for permutations $\omega$ with $\omega(i)>0$ for all $i$, any such decomposition must satisfy
    \[\inv(\omega)=\inv_B(\omega)=c+\sum_{i\in [n]} c_{(i,\omega(i))},\]
    where equality of the inversion statistics follows from the fact that $\negative(\omega)=0$. We define the two sets of permutations in $B_n$ using the one-line notation
    \begin{equation*}
    \begin{split}
        P_1=\{[1,2,3,4,5,\ldots,n],[2,3,1,4,5,\ldots,n],[3,1,2,4,5,\ldots,n]\}, \\ 
    P_2=\{[3,2,1,4,5,\ldots,n],[2,1,3,4,5,\ldots,n],[1,3,2,4,5,\ldots,n]\}.
    \end{split}
    \end{equation*}
    By construction, any expression of the form (\ref{eq:size1}) would imply
    \[\sum_{\omega\in P_1} \inv(\omega) = c+\sum_{i\in [3],j\in [3]} c_{(i,j)}+3\cdot \sum_{i\in [n]\setminus [3]} c_{(i,i)} = \sum_{\omega\in P_2} \inv(\omega).\]
    However, a direct calculation shows that 
    \[\sum_{\omega\in P_1} \inv(\omega)=4 \neq 5=\sum_{\omega\in P_2} \inv(\omega).\]
    Hence, $\inv$ and $\inv_B$ cannot be expressed in the form shown in (\ref{eq:size1}).

    The statistic $\des_B$ follows from a similar argument. If a decomposition using size $1$ partial colored permutations exists, then $\sum_{\omega\in P_1} \des_B(\omega)  = \sum_{\omega\in P_2} \des_B(\omega).$
    We similarly calculate 
    \[ \sum_{\omega\in P_1} \des_B(\omega)= 2 \neq 4 =\sum_{\omega\in P_2} \des_B(\omega). \qedhere \]
\end{proof}

\begin{corollary}\label{cor:conv-IndependenceHigherMoments}
    The converse of Theorem \ref{thm:IndependenceHigherMoments} does not hold. 
\end{corollary}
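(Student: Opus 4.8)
The plan is to combine the explicit mean formula of \Thm{thm:mean-des} (equivalently \Cor{cor:inversions}) with the degree lower bound of \Thm{thm:size2}. In its natural first-moment form, the converse of \Thm{thm:IndependenceHigherMoments} asserts: if $\E_{\rpart}[X]$ coincides on all conjugacy classes of $\mathfrak{S}_{n,r}$ with no cycles of length $1,2,\ldots,m$, then $X$ has degree at most $m$. I would refute this with $m=1$ and $X=\des_B$ on $B_n\cong\mathfrak{S}_{n,2}$ for $n\ge 3$.

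First, \Thm{thm:mean-des} gives $\E_{\lambda,\mu}[\des_B]=\tfrac n2-\tfrac{\Delta^2(\lambda,\mu)}{2n}$ with $\Delta^2(\lambda,\mu)=m_1(\lambda)^2-m_1(\mu)^2$, so \Cor{cor:inversions} shows $\E_{\lambda,\mu}[\des_B]=n/2$ on every conjugacy class $C_{\lambda,\mu}$ with $m_1(\lambda)=m_1(\mu)$, in particular on every conjugacy class of $B_n$ with no cycles of length $1$. Hence the first moment of $\des_B$ satisfies the conclusion of \Thm{thm:IndependenceHigherMoments} at level $m=1$. Second, \Thm{thm:size2} shows that $\des_B$ has degree exactly $2$ for $n\ge 3$, so it does not have degree at most $1$. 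Putting the two together, the implication ``[$\E_{\lambda,\mu}[\des_B]$ coincides on all conjugacy classes without cycles of length $1$] $\Longrightarrow$ [$\des_B$ has degree at most $1$]'' is false, which is precisely the failure of the converse of \Thm{thm:IndependenceHigherMoments}. The statistics $\inv$ and $\inv_B$ serve equally well, using \Thm{thm:inversions} in place of \Thm{thm:mean-des}.

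I do not expect a genuine obstacle here, since both ingredients are already established; the one point requiring a sentence of care is to pin down which implication is being refuted, because the hypothesis of \Thm{thm:IndependenceHigherMoments} at level $m$ quantifies over all moments $k$, whereas the counterexample is most cleanly phrased using the first moment alone. It is worth remarking that the example in fact shows something stronger: the threshold at which the moments of a statistic stabilize on conjugacy classes without short cycles can be strictly smaller than what its degree predicts, so no converse recovering the exact degree from this stabilization threshold can hold.
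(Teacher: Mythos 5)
Your argument is correct and is essentially identical to the paper's own proof: both combine Theorem~\ref{thm:size2} (showing $\des_B$, $\inv$, and $\inv_B$ have degree exactly $2$ for $n\geq 3$) with Corollary~\ref{cor:inversions} (their means coincide on all conjugacy classes with no cycles of length $1$) to refute the converse. Your added remark clarifying exactly which implication fails is a reasonable elaboration but not a different method.
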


\begin{proof}
Theorem \ref{thm:size2} implies that the inversion and descent statistics on $B_n$  have degree $2$ when $n\geq 3$. Corollary \ref{cor:inversions} implies that the means of these statistics coincide on all $C_{\lambda,\mu}$ with no cycles of length $1$.
\end{proof}

\begin{remark} 
    We note that the statistics $\des_B$, $\inv$, and $\inv_B$ on $B_2$ have degree $1$. Letting $X$ denote any of these statistics, one can set up a system of linear equations for the conditions that the coefficients $c_{(i,j)}$ must satisfy for \[X=c+\sum_{i\in [2],j\in [\pm 2]} c_{(i,j)}I_{(i,j)}\] to hold on every element $\omega\in B_2$. For these statistics, this resulting system of equations has a solution (and in fact, infinitely many solutions). Hence, the assumption $n\geq 3$ in Theorem~\ref{thm:size2} is necessary.
\end{remark}

We conclude with the following analogue of \cite[Proposition 6.2]{GRWCPermutationStatistics}.  
\begin{proposition}\label{prop:first-moments-allBn} We have the following expressions for the first moments over all of $B_n$:
\begin{enumerate}
    \item[(1)] $\E_{B_n}[\inv]=\frac{n(n-1)}{4}$,
    \item[(2)] $\E_{B_n}[\inv_B]=\frac{n^2}{2}$,
    \item[(3)] $\E_{B_n}[\des_B]=\frac{n}{2}$, and
    \item[(4)] $\E_{B_n}[\negative]=-\frac{n(n+1)}{4}$.
    \end{enumerate}
Let $X$ denote any of the above statistics. Then there is a  polynomial $f^{X}_n(a_1, b_1)$   of degree at most $2$ in $a_1$ and $b_1$ (degree 1 for $\negative$) such that 
\begin{equation} \sum_{(\lambda,\mu)\vdash n}  z^{-1}_{(\lambda, \mu)}  f^{X}_n(m_1(\lambda), m_1(\mu))=0\quad 
\mathrm{ and }\quad 
\E_{\lambda,\mu}[X]=\E_{B_n}[X]+
f^{X}_n(m_1(\lambda), m_1(\mu)).\end{equation}
\end{proposition}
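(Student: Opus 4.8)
## Proof proposal for Proposition~\ref{prop:first-moments-allBn}

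The plan is to treat all four statistics uniformly by noting that each has already been analyzed on an arbitrary conjugacy class $C_{\lambda,\mu}$ in the preceding results, so that the first moment over all of $B_n$ is obtained by averaging those closed-form expressions against the class-size weights $z_{\lambda,\mu}^{-1}$, using the identity $\E_{B_n}[X]=\sum_{(\lambda,\mu)\vdash n} z_{\lambda,\mu}^{-1}\E_{\lambda,\mu}[X]$ from Lemma~\ref{lem:classsize}. The key observation is that in every case $\E_{\lambda,\mu}[X]$ has the shape ``(constant depending only on $n$) $+$ (linear combination of $\Delta^1(\lambda,\mu)$ and $\Delta^2(\lambda,\mu)$ with coefficients depending only on $n$),'' where $\Delta^k(\lambda,\mu)=m_1(\lambda)^k-m_1(\mu)^k$. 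Concretely, Theorem~\ref{thm:mean-des} gives $\E_{\lambda,\mu}[\des_B]=n/2-\Delta^2/(2n)$; Proposition~\ref{prop:neg} gives $\E_{\lambda,\mu}[\negative]=-n(n+1)/4+\tfrac{n+1}{4}\Delta^1$; and Theorem~\ref{thm:inversions} gives the stated formulas for $\inv$ and $\inv_B$, each of the form $(\text{const})-c_1\Delta^1-c_2\Delta^2$.

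First I would invoke the vanishing condition~\eqref{eqn:vanishing-condition}, namely $\sum_{(\lambda,\mu)\vdash n} z_{\lambda,\mu}^{-1}\Delta^k(\lambda,\mu)=0$ for all $k\ge 0$, which is a direct corollary of the measure-preserving bijection $\phi:C_{\lambda,\mu}\to C_{\mu,\lambda}$ of Lemma~\ref{lem:symm-Bnconjugacyclasses} (that bijection shows $z_{\lambda,\mu}=z_{\mu,\lambda}$ and swaps the roles of $m_1(\lambda)$ and $m_1(\mu)$, so the $\Delta^k$ contributions cancel in pairs). Averaging the four displayed expressions for $\E_{\lambda,\mu}[X]$ against $z_{\lambda,\mu}^{-1}$ and using $\sum_{(\lambda,\mu)} z_{\lambda,\mu}^{-1}=1$ together with~\eqref{eqn:vanishing-condition}, every term involving $\Delta^1$ or $\Delta^2$ dies, leaving exactly the $n$-dependent constants: $n(n-1)/4$, $n^2/2$, $n/2$, and $-n(n+1)/4$ respectively. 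This establishes parts (1)–(4).

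For the final sentence of the proposition, I would simply set $f_n^X(a_1,b_1):=\E_{\lambda,\mu}[X]\big|_{m_1(\lambda)=a_1,\,m_1(\mu)=b_1}-\E_{B_n}[X]$. Substituting $\Delta^k(\lambda,\mu)=a_1^k-b_1^k$ into the closed forms above exhibits $f_n^X$ as a polynomial in $a_1,b_1$ of degree at most $2$ (and of degree $1$ in the case of $\negative$, since only $\Delta^1$ appears there). The identity $\E_{\lambda,\mu}[X]=\E_{B_n}[X]+f_n^X(m_1(\lambda),m_1(\mu))$ holds by construction, and the vanishing condition $\sum_{(\lambda,\mu)\vdash n} z_{\lambda,\mu}^{-1} f_n^X(m_1(\lambda),m_1(\mu))=0$ follows again from~\eqref{eqn:vanishing-condition} applied to the $\Delta^1$ and $\Delta^2$ parts, since the constant part of $f_n^X$ is $\E_{B_n}[X]-\E_{B_n}[X]=0$.

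There is essentially no obstacle here: the proposition is a bookkeeping corollary of Theorems~\ref{thm:mean-des}, \ref{thm:inversions}, Proposition~\ref{prop:neg}, and the symmetry~\eqref{eqn:vanishing-condition}. The only point requiring a word of care is making explicit that $z_{\lambda,\mu}^{-1}$ is a probability weight, i.e.\ $\sum_{(\lambda,\mu)\vdash n} z_{\lambda,\mu}^{-1}=1$ (immediate from $|C_{\lambda,\mu}|=|B_n|/z_{\lambda,\mu}$ in Lemma~\ref{lem:classsize}), and that $\E_{B_n}[X]=\sum_{(\lambda,\mu)} z_{\lambda,\mu}^{-1}\E_{\lambda,\mu}[X]$ because $B_n$ is partitioned into its conjugacy classes; both are recorded already in the excerpt.
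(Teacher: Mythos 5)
Your proposal is correct and follows essentially the same route as the paper: the paper's proof likewise derives parts (1)--(4) by combining the closed-form conjugacy-class expressions of Theorem~\ref{thm:mean-des}, Theorem~\ref{thm:inversions}, and Proposition~\ref{prop:neg} with the symmetry of Lemma~\ref{lem:symm-Bnconjugacyclasses} and the resulting vanishing condition~\eqref{eqn:vanishing-condition}, and obtains $f^X_n$ by subtraction exactly as you do. Your write-up just makes explicit the bookkeeping (the probability weights $z_{\lambda,\mu}^{-1}$ summing to $1$ and the class decomposition of $\E_{B_n}$) that the paper leaves implicit.
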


\begin{proof} These facts follow 
from the formulas of Theorem~\ref{thm:mean-des},  and Theorem~\ref{thm:inversions}. Proposition~\ref{prop:neg} gives the result for $\E_{B_n}[\negative]$ 
using Lemma~\ref{lem:symm-Bnconjugacyclasses}.
\end{proof}

\section{Descents in conjugacy classes of {$B_n$} and a central limit theorem}\label{section:CLT}

In this section, we will prove Theorem~\ref{thm:CLTMain}. In the process, we will derive properties involving the generating function for $\des_B+1$ on a conjugacy class $C_{\lambda,\mu}$, as well as a formula for the number of elements in $C_{\lambda,\mu}$ with a fixed number of descents.

We first recall a description of the descent statistic
\[\des_B(\omega)=|\{i\in \{0\}\cup [n-1]\mid \omega(i)>\omega(i+1)\}|\]
with the convention that $\omega(0)=0$. Note that this is based on the usual order on $[\pm n]$. Reiner \cite{ReinerEuropJC1993-2} uses a different notion of descents: under the ordering 
\begin{equation}\label{eq:reiner_descents}
    1<2<\cdots<n<-n<\cdots<-2<-1,
\end{equation} $\omega$ has a descent at position $i\in [n-1]$ if $\omega(i)>\omega(i+1)$, and $\omega$ has a descent at position $n$ if $\omega(n)<0$. While the two definitions are different, \cite[Remark 5.1]{FulmanKimLeePetersen2021} shows that the generating function
\begin{equation}\label{eq:des_gen_function}
    \sum_{\omega \in B_n} t^{\des_B(\omega)+1} \prod_i x_i^{m_i(\lambda(\omega))} y_i^{m_i(\mu(\omega))}
\end{equation}
    is unaffected, where $(\lambda(\omega),\mu(\omega))$ denotes the cycle type of $\omega$.

Following Fulman's analysis for the symmetric group in \cite{FulmanJCTA1998}, our approach for establishing Theorem~\ref{thm:CLTMain} involves analyzing the generating function given in (\ref{eq:des_gen_function}), which allows us to analyze the generating function for $\des_B$ on a conjugacy class. We then relate this with the generating function for descents on all of $B_n$. In the case where there are no short cycles in $C_{\lambda,\mu}$, we will ultimately conclude that certain moments of $\des_B$ agree on $C_{\lambda,\mu}$ and $B_n$, and this allows us to use the method of moments with a known central limit theorem of Chow and Mansour for $\des_B$ on $B_n$ given below.

\begin{proposition}\cite[Thm 3.4]{CM2012}\label{prop:norm-allBn} Let $X_n$ be the descent statistic $\des_B$ defined on $B_n$. Then $X_n$ has mean $n/2$ and variance $(n+1)/12$, and as $n\to\infty$, the standardized random variable $(X_n-n/2)/\sqrt{(n+1)/12}$ converges to a standard normal distribution.
    \end{proposition}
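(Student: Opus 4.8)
\noindent\emph{Sketch of an independent proof.} The plan is to deduce Proposition~\ref{prop:norm-allBn} from the real-rootedness of the type-$B$ Eulerian polynomial
\[
B_n(t):=\sum_{\omega\in B_n}t^{\des_B(\omega)}.
\]
This real-rootedness is classical; one route is an induction on $n$ using a recurrence of the shape $B_n(t)=\bigl(1+(2n-1)t\bigr)B_{n-1}(t)+2t(1-t)B_{n-1}'(t)$ (with $B_0(t)=1$) together with a standard lemma that such an operation preserves the property of having only real nonpositive roots. Since $\des_B$ takes values in $\{0,1,\dots,n\}$ and the only elements with $0$ and $n$ descents are $[1,2,\dots,n]$ and the longest element $w_0=[-1,-2,\dots,-n]$, respectively, $B_n(t)$ has degree exactly $n$ with constant and leading coefficients both equal to $1$; hence $B_n(t)/|B_n|$ has $n$ strictly negative roots and factors as $\prod_{i=1}^n\bigl(p^{(n)}_i\,t+(1-p^{(n)}_i)\bigr)$ with each $p^{(n)}_i\in(0,1)$. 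Equivalently, for each fixed $n$ the variable $X_n=\des_B$ on $B_n$ has the law of a sum $\sum_{i=1}^n\xi^{(n)}_i$ of \emph{independent} Bernoulli$(p^{(n)}_i)$ random variables.

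Next I would pin down the first two moments directly. First, $B_n(t)$ is palindromic: the bijection $\omega\mapsto w_0\omega$ negates every value, turning each descent of $\omega$ (including at position $0$, under the convention $\omega(0)=0$) into an ascent of $w_0\omega$ and conversely, so $\des_B(w_0\omega)=n-\des_B(\omega)$ and $B_{n,k}=B_{n,n-k}$; hence $\E_{B_n}[X_n]=n/2$. For the variance, write $X_n=\sum_{i=0}^{n-1}D_i$ with $D_i(\omega)=\mathbf 1[\omega(i)>\omega(i+1)]$ and $\omega(0)=0$. One checks that $\E_{B_n}[D_i]=\tfrac12$ for every $i$ (for $i\ge1$ by swapping the images of positions $i$ and $i+1$; for $i=0$ since $\omega(1)<0$ with probability $\tfrac12$); that $D_i$ and $D_j$ are independent whenever $|i-j|\ge2$ (the signs and relative order of the images of disjoint blocks of positions are independent); and that $\mathrm{Cov}_{B_n}(D_0,D_1)=\tfrac18-\tfrac14=-\tfrac18$ while $\mathrm{Cov}_{B_n}(D_i,D_{i+1})=\tfrac16-\tfrac14=-\tfrac1{12}$ for $1\le i\le n-2$. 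Summing,
\[
\mathrm{Var}_{B_n}(X_n)=\frac n4+2\!\left(-\frac18\right)+2(n-2)\!\left(-\frac1{12}\right)=\frac{n+1}{12},
\]
so in particular $\sum_i p^{(n)}_i=n/2$ and $s_n^2:=\sum_i p^{(n)}_i(1-p^{(n)}_i)=(n+1)/12\to\infty$.

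Finally, asymptotic normality will follow at once from the Lindeberg--Feller theorem for the triangular array of independent summands $\xi^{(n)}_i-p^{(n)}_i$: each summand has absolute value at most $1$, so as soon as $n$ is large enough that $\varepsilon s_n>1$ the Lindeberg sum $s_n^{-2}\sum_i\E\bigl[(\xi^{(n)}_i-p^{(n)}_i)^2\mathbf 1\{|\xi^{(n)}_i-p^{(n)}_i|>\varepsilon s_n\}\bigr]$ is identically zero; hence $(X_n-n/2)/\sqrt{(n+1)/12}$ converges in distribution to a standard normal. (Alternatively one could bypass real-rootedness and argue by the method of moments, computing the limiting joint moments of the $D_i$ directly; this is more laborious but avoids the recurrence.)

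I expect the real-rootedness of $B_n(t)$ to be the only genuinely nontrivial step: knowing the mean and variance alone does not force a Gaussian limit, so one needs the extra structure that represents $\des_B$ as a sum of independent indicators. Proving that cleanly --- either through the recurrence and a real-rootedness-preservation lemma, or through a lattice-point/$P$-partition argument applied to the identity $\sum_{k\ge0}(2k+1)^nt^k=B_n(t)/(1-t)^{n+1}$ --- is the main obstacle; the remaining ingredients are elementary counting and a central limit theorem for bounded independent summands.
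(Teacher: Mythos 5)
Your proposal is correct, but note that the paper does not actually prove Proposition~\ref{prop:norm-allBn} at all --- it is imported wholesale from Chow and Mansour \cite{CM2012}, and the only ingredient the paper verifies independently is the generating-function identity of Proposition~\ref{prop:Bn_genfunction} (plus the remark reconciling $\des_B$ with $\des_{n,2}$). So what you have written is a self-contained substitute for the citation rather than a variant of an argument in the text. Your route --- real-rootedness of the type-$B$ Eulerian polynomial, hence a representation of $\des_B$ as a sum of independent Bernoullis, then Lindeberg--Feller --- is the standard ``Harper's method'' proof and is essentially what underlies the cited result; all the computational steps check out. I verified your recurrence ($B_2=1+6t+t^2$, $B_3=1+23t+23t^2+t^3$), the palindromicity argument via $\omega\mapsto w_0\omega$ (every position $i\in\{0\}\cup[n-1]$ is either a descent or an ascent, so $\des_B(w_0\omega)=n-\des_B(\omega)$), the covariances $\mathrm{Cov}(D_0,D_1)=-\tfrac18$ and $\mathrm{Cov}(D_i,D_{i+1})=-\tfrac1{12}$, the vanishing of non-adjacent covariances (via the commuting right multiplications by $s_i$, $s_j$), and the resulting variance $\tfrac n4-\tfrac14-\tfrac{n-2}6=\tfrac{n+1}{12}$. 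You are also right that real-rootedness is the one genuinely nontrivial input; it is classical (Brenti), and either of the two routes you name works. Two small caveats: (i) your variance bookkeeping, and indeed the proposition as stated, requires $n\ge 2$ --- for $n=1$ the variance of $\des_B$ is $1/4$, not $2/12$ --- but this is harmless since the paper only invokes the proposition asymptotically; (ii) your $B_n(t)$ omits the paper's shift $t^{\des_B+1}$, which is fine since you define it explicitly, but worth flagging to avoid confusion with Proposition~\ref{prop:Bn_genfunction}. Compared with merely citing \cite{CM2012}, your argument buys a fully self-contained Section~\ref{section:CLT} and, via the independent-Bernoulli representation, strictly more than the CLT (e.g.\ Berry--Esseen rates and unimodality), at the cost of importing or reproving the real-rootedness theorem.
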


   We will need the generating function of $\des_B$ over all of $B_n$, which is well-understood.

     \begin{proposition}\cite[Eqn.~(13.3)]{Petersen2015}\label{prop:Bn_genfunction} Let ${B}_n(t)=\sum_{\omega \in B_n} t^{\des_B(\omega)+1}$. 
    Then 
    \begin{equation*}
        \frac{{B}_n(t)}{(1-t)^{n+1}}=\sum_{k \geq 1} (2k-1)^n t^k.
    \end{equation*}
    \end{proposition}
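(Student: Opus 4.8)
The stated identity is the generating-function shadow of a Worpitzky-type identity, so the plan is first to make that reduction and then to prove the underlying identity bijectively. Writing $(1-t)^{-(n+1)}=\sum_{j\ge 0}\binom{n+j}{n}t^j$ and $B_n(t)=\sum_{\omega\in B_n}t^{\des_B(\omega)+1}$, the coefficient of $t^k$ in $B_n(t)/(1-t)^{n+1}$ is $0$ for $k\le 0$ and, for $k\ge 1$, equals $\sum_{\omega\in B_n}\binom{n+k-1-\des_B(\omega)}{n}$ --- the terms with $\des_B(\omega)\ge k$ drop out because the binomial then has upper entry lying in $\{0,1,\dots,n-1\}$. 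Since the right-hand side of the proposition has $k$-th coefficient $(2k-1)^n$ for $k\ge 1$ and $0$ otherwise, the proposition is equivalent to the identity
\[
(2m+1)^n=\sum_{\omega\in B_n}\binom{\,n+m-\des_B(\omega)\,}{n}\qquad(m\ge 0).
\]

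Next I would interpret both sides as cardinalities and match them. The left side counts functions $c\colon[n]\to\{-m,-m+1,\dots,m\}$. For a fixed $\omega\in B_n$, let $D(\omega)\subseteq\{0,1,\dots,n-1\}$ be its descent set relative to the convention $\omega(0)=0$, so $|D(\omega)|=\des_B(\omega)$; then $\binom{n+m-\des_B(\omega)}{n}$ counts the integer sequences $0=h_0\le h_1\le\dots\le h_n\le m$ with $h_i<h_{i+1}$ for every $i\in D(\omega)$ (in particular $h_1>0$ whenever $0\in D(\omega)$): subtracting the forced jumps via $h_i'=h_i-|D(\omega)\cap\{0,\dots,i-1\}|$ turns these into weakly increasing length-$n$ sequences with values in $\{0,\dots,m-\des_B(\omega)\}$, of which there are $\binom{n+m-\des_B(\omega)}{n}$. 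So the right side counts pairs $(\omega,h)$ with $h$ a sequence \emph{compatible} with $\omega$ in this sense, and it remains to biject these pairs with the functions $c$.

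The bijection is a signed ``standardization.'' In one direction, given $\omega$ with one-line notation $(w_1,\dots,w_n)$ and a compatible $h$, set $c(|w_i|)=\operatorname{sgn}(w_i)\,h_i$ with the convention $\operatorname{sgn}(0)=+1$; since $|w_1|,\dots,|w_n|$ is a permutation of $[n]$ this defines $c$, and $|c(|w_i|)|=h_i\le m$. Conversely, given $c$, adjoin a phantom letter $c(\widehat 0):=0$, totally order the value alphabet by $0\prec -1\prec 1\prec -2\prec 2\prec\dots\prec -m\prec m$, and sort $c(\widehat 0),c(1),\dots,c(n)$ into $\prec$-weakly increasing order, breaking ties among equal nonnegative values by increasing index and among equal negative values by decreasing index; the induced rearrangement of $\{1,\dots,n\}$, with each entry signed according to the sign of its $c$-value, is $\omega$, and the absolute values of the sorted letters give $h$. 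What must be checked is that this is well defined and inverse to the first map, and that the $h$ produced is weakly increasing and compatible --- the sign-sensitive tie-break is exactly what forces a strict step of $h$ across each descent of $\omega$ and only there, while the phantom $0$-th letter manufactures the constraint $h_1>0$ precisely when $\omega(1)<0$, i.e.\ when $0\in D(\omega)$. Granting this, summing over $\omega$ gives the displayed Worpitzky identity, and the coefficient comparison of the first step finishes the proof.

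The heart of the argument --- and the step most prone to bookkeeping errors --- is verifying this standardization, in particular pinning down the correct order on the alphabet and the sign-dependent tie-breaking so that ``strict step of $h$'' corresponds exactly to ``position in $D(\omega)$,'' including at the boundary position $0$. Two alternatives sidestep the explicit bijection but share the same combinatorial core: (i) prove the recurrence $B(n,k)=(2k+1)B(n-1,k)+(2(n-k)+1)B(n-1,k-1)$ for $B(n,k):=\#\{\omega\in B_n:\des_B(\omega)=k\}$ by inserting $\pm n$ into the $n$ available slots of an element of $B_{n-1}$, and then induct on $n$ to show $\sum_k(2k+1)^n t^k=\big(\sum_k B(n,k)t^k\big)/(1-t)^{n+1}$; or (ii) argue Ehrhart-theoretically, identifying $\sum_{\omega\in B_n}t^{\des_B(\omega)}$ with the $h^\ast$-polynomial of the cube $[-1,1]^n$ and using $|\,m[-1,1]^n\cap\mathbb{Z}^n\,|=(2m+1)^n$.
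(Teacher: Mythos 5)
The paper does not actually prove this proposition; it is imported verbatim from Petersen's book, so there is no internal argument to compare yours against. Your proposal is a correct, self-contained proof of the cited fact. The reduction is right: expanding $(1-t)^{-(n+1)}$ shows the claim is equivalent to the type-$B$ Worpitzky identity $(2m+1)^n=\sum_{\omega\in B_n}\binom{n+m-\des_B(\omega)}{n}$ for $m\ge 0$ (the out-of-range binomials vanish exactly as you say, since $\des_B(\omega)\le n$ puts the upper entry in $\{0,\dots,n-1\}$), and your count of $h$-sequences compatible with a fixed $\omega$ via $h_i'=h_i-|D(\omega)\cap\{0,\dots,i-1\}|$ is correct. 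The standardization bijection you flag as the delicate step does go through, and the two places where it could break are both handled by your setup. First, ties: if $h_i=h_{i+1}=v$ with $v\ge 1$, the sign patterns $(+,+)$, $(-,-)$, $(-,+)$ for $(w_i,w_{i+1})$ are exactly the non-descent configurations, and the order $-v\prec v$ together with your index tie-breaks (increasing for nonnegative letters, decreasing for negative ones) reproduces each of them, while $(+,-)$ is a descent and is correctly excluded because compatibility forces $h_i<h_{i+1}$ there. Second, the only genuine trap is that $c(|w_i|)=\operatorname{sgn}(w_i)h_i$ loses the sign of $w_i$ when $h_i=0$; but in a compatible pair $h_i=0$ forces $h_1=\cdots=h_i=0$, hence positions $0,\dots,i-1$ are all non-descents, hence $0<w_1<\cdots<w_i$, so $w_i>0$ and no information is lost (your phantom letter handles the $i=1$ case and weak monotonicity propagates it). With those checks the map is a bijection and the coefficient comparison finishes the proof; either of your alternatives (the insertion recurrence or the Ehrhart $h^\ast$-polynomial of $[-1,1]^n$) is an equally valid standard route.
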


\begin{remark}
Chow and Mansour established Proposition~\ref{prop:norm-allBn} for the general descent statistic $\des_{n,r}$ on $\mathfrak{S}_{n,r}$, where $r$ is fixed and $n\to\infty$. The definition of $\des_{B}$ on $B_n$ is not equivalent to the definition of $\des_{n,2}$ on $\mathfrak{S}_{n,2}$ under the usual isomorphism between these groups. However, Steingr\'{i}msson \cite{steingrim94} shows that the generating function for $\des_{n,2}+1$ matches the one given in Proposition~\ref{prop:Bn_genfunction}, so the distributions of $\des_{B}$ and $\des_{n,2}$ are the same on $B_n\cong \mathfrak{S}_{n,2}$. This justifies our restatement of Proposition~\ref{prop:norm-allBn} with $\des_B$.
\end{remark}

We now analyze (\ref{eq:des_gen_function}), which will allow us to derive an expression for the generating function of $\des_B$ on a conjugacy class $C_{\lambda,\mu}$. We will need several known results, which we  state now. We begin with a definition for an expression that will come up frequently in our analysis.

\begin{definition}\label{def:exponent-Reinernecklaces}
Let  $\mu(d)$ be the number-theoretic M\"{o}bius function.  For nonnegative integers $r$ and $m$, define \begin{equation}\label{eq:mobius}
N(r, 2m)=\frac{1}{2m} \sum_{\substack{d\vert m \\ d \text{ odd}}} \mu(d) \left(r^{m/d}-1\right).
\end{equation} 
\end{definition}

\begin{remark} \label{rmk:Reinernecklaces} 
 Reiner \cite[Theorem 4.1 and Theorem 4.2]{ReinerEuropJC1993-2} 
describes two sets of objects, \emph{primitive blinking necklaces} and \emph{primitive twisted necklaces}. Reiner then shows that if
$D_{m, i}^{(k)}$ is the number of primitive blinking necklaces $D$ of size $m$ with $|D|=i$ and $\max(D) \leq k-1$, and $P_{m, i}^{(k)}$ is the number of primitive twisted necklaces $P$ of size $m$ with $|P|=i$ and $\max(P) \leq k-1$, then 
\begin{equation}\label{eqn:Dmi}
\sum_{i=0}^{(k-1)m} D_{m, i}^{(k)}=\begin{cases}
N(2k-1, 2m) & \text{ if } k>0 \text{ and } m>1\\
k & \text{ if } k>0 \text{ and } m=1\\
0 & \text{ if } k=0,
\end{cases}
\end{equation}
\begin{equation}\label{eqn:Pmi}
\sum_{i=0}^{(k-1)m} P_{m, i}^{(k)}=\begin{cases}
N(2k-1,2m) & \text{ if } k>0 \text{ and } m>1 \\
k-1 & \text{ if } k>0 \text{ and } m=1\\
0 & \text{ if } k=0.
\end{cases}
\end{equation}
Consequently, $N(2k-1, 2m)$ enumerates a set of objects, so it must be a nonnegative integer for all $k,m \ge 1$.
\end{remark}

We use the notation $(\lambda(\omega),\mu(\omega))$ for the cycle type of $\omega\in B_n$. The following result, which  is stated without proof in \cite[Theorem 5.3]{FulmanKimLeePetersen2021}, is a special case of \cite[Theorem 4.1]{ReinerEuropJC1993-2}.  In view of some possibly confusing typos in these papers, and the fact that this result plays a key role in our analysis, we include a proof.

\begin{theorem}[cf. {\cite[Theorem~5.3]{FulmanKimLeePetersen2021}}]\label{thm:all}
 The following holds:
\begin{equation}
\begin{split}
    & \,\sum_{n \geq 0} \frac{u^n}{(1-t)^{n+1}} \left( \sum_{\omega \in B_n} t^{\des_B(\omega)+1} \prod_i x_i^{m_i(\lambda(\omega))} y_i^{m_i(\mu(\omega))}\right) \\
    =& \, 1+\sum_{k\geq 1} t^k \frac{1}{1-x_1 u} \prod_{m \geq 1} \left(\frac{1+y_m u^m}{1-x_m u^m}\right)^{N(2k-1, 2m)}.
\end{split}
\end{equation}
\end{theorem}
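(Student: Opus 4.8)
The plan is to deduce the identity from Reiner's general generating function \cite[Theorem~4.1]{ReinerEuropJC1993-2} together with the combinatorial enumerations recorded in Remark~\ref{rmk:Reinernecklaces}. Reiner's theorem, proved via the theory of $P$-partitions of type $B$, shows that a signed permutation $\omega\in B_n$ equipped with a ``coloring'' taking values in $\{0,1,\dots,k-1\}$ decomposes uniquely into a multiset of \emph{primitive blinking necklaces} (one for each even/positive cycle of $\omega$, of size the cycle length, weighted by the corresponding $x_m$) and \emph{primitive twisted necklaces} (one for each odd/negative cycle, weighted by the corresponding $y_m$), and that $\tfrac{t^{\des_B(\omega)+1}}{(1-t)^{n+1}}$ is the generating function, in powers of $t$, for the number of such colorings. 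Because the count of colorings of a fixed cycle bounded by $k-1$ does not depend on how many descents that cycle contributes, the resulting exponents are exactly the \emph{total} counts $\sum_i D_{m,i}^{(k)}$ and $\sum_i P_{m,i}^{(k)}$ of Remark~\ref{rmk:Reinernecklaces}. Concretely, the first step is to recall Reiner's identity in the form
\[
\sum_{n\ge0}\frac{u^n}{(1-t)^{n+1}}\Big(\sum_{\omega\in B_n}t^{\des_B(\omega)+1}\prod_i x_i^{m_i(\lambda(\omega))}y_i^{m_i(\mu(\omega))}\Big)=1+\sum_{k\ge1}t^k\prod_{m\ge1}\Big(\tfrac{1}{1-x_m u^m}\Big)^{\sum_i D_{m,i}^{(k)}}\big(1+y_m u^m\big)^{\sum_i P_{m,i}^{(k)}},
\]
with the usual convention for the $n=0$ term producing the isolated summand $1$.

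The second step is bookkeeping with Remark~\ref{rmk:Reinernecklaces}. By \eqref{eqn:Dmi} and \eqref{eqn:Pmi}, for every $m\ge2$ we have $\sum_i D_{m,i}^{(k)}=\sum_i P_{m,i}^{(k)}=N(2k-1,2m)$, so the $x_m$- and $y_m$-contributions merge into $\big(\tfrac{1+y_m u^m}{1-x_m u^m}\big)^{N(2k-1,2m)}$. For $m=1$ we have $\sum_i P_{1,i}^{(k)}=k-1=N(2k-1,2)$ but $\sum_i D_{1,i}^{(k)}=k$; writing $k=N(2k-1,2)+1$ and peeling off one factor, the $m=1$ contribution becomes $\tfrac{1}{1-x_1 u}\cdot\big(\tfrac{1+y_1 u}{1-x_1 u}\big)^{N(2k-1,2)}$, where the first factor is precisely the isolated $\tfrac{1}{1-x_1 u}$ on the right-hand side and the second is the $m=1$ term of $\prod_{m\ge1}\big(\tfrac{1+y_m u^m}{1-x_m u^m}\big)^{N(2k-1,2m)}$. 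Reassembling the product over all $m$ yields the claimed formula.

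As an internal check that also pins down the conventions, I would specialize $x_i=y_i=1$. Taking logarithms and expanding $\log(1\pm u^m)$, one Möbius-inverts the definition \eqref{eq:mobius} of $N(2k-1,2m)$ — where only the odd ``inner'' divisors survive — to obtain the telescoping identity $\prod_{m\ge1}\big(\tfrac{1+u^m}{1-u^m}\big)^{N(2k-1,2m)}=\tfrac{1-u}{1-(2k-1)u}$. The right-hand side then collapses to $1+\sum_{k\ge1}\tfrac{t^k}{1-(2k-1)u}=\sum_{n\ge0}\tfrac{u^n}{(1-t)^{n+1}}B_n(t)$, recovering Proposition~\ref{prop:Bn_genfunction} and confirming the coefficient bookkeeping.

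The main obstacle is not any new idea but faithful transcription. One must correctly match Reiner's ``blinking'' and ``twisted'' necklaces with the even (positive, $x$-weighted) and odd (negative, $y$-weighted) cycles; verify that the $(1-t)^{-(n+1)}$ normalization really does collapse Reiner's necklace generating functions to the plain counts $\sum_i D_{m,i}^{(k)}$ and $\sum_i P_{m,i}^{(k)}$; and handle the $m=1$ exception together with the $n=0$ boundary term, which is the source of the additive $1$. Since, as noted, the statements in \cite{ReinerEuropJC1993-2} and \cite{FulmanKimLeePetersen2021} contain confusing typos, careful attention to conventions is exactly where the work lies.
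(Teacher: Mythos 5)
Your proposal is correct and follows essentially the same route as the paper: invoke Reiner's Theorem~4.1 specialized at $q=1$, convert the exponents $\sum_i D_{m,i}^{(k)}$ and $\sum_i P_{m,i}^{(k)}$ into $N(2k-1,2m)$ via Remark~\ref{rmk:Reinernecklaces}, and peel off the one extra factor in the $m=1$ case to produce the isolated $\frac{1}{1-x_1 u}$, with the $k=0$ (equivalently $n=0$) term supplying the additive $1$. The only step you gesture at rather than nail down --- that replacing Reiner's descent under the ordering \eqref{eq:reiner_descents} by $\des_B$ leaves the generating function unchanged --- is precisely what the paper imports from \cite[Remark~5.1]{FulmanKimLeePetersen2021} in the discussion preceding the theorem, so your argument is complete once that citation is made explicit.
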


\begin{proof}
In \cite[Theorem 4.1]{ReinerEuropJC1993-2}, 
it was established that
\begin{align}\label{eq:der1}
&\sum_{n \geq 0} \frac{u^n}{(t; q)_{n+1}} \left( \sum_{\omega \in B_n} t^{\des(\omega)+1} q^{\maj(\omega)} \prod_i x_i^{m_i(\lambda(\omega))} y_i^{m_i(\mu(\omega))}\right)\notag \\
=&\sum_{k\geq 0} t^k \prod_{m \geq 1} \prod_{i=0}^{(k-1)m} (1-x_m u^m q^i)^{-D_{m, i}^{(k)}} (1+y_m u^m q^i)^{P_{m, i}^{(k)}},
\end{align}
where
\begin{itemize}
\item 
$(t; q)_{n+1}=(1-t)(1-tq)\cdots (1-tq^{n}),$
\item
$\des(\omega)$ and $\maj(\omega)$ are the descent and major index statistics with respect to the ordering in \eqref{eq:reiner_descents}, and
\item
$D_{m, i}^{(k)}$, $P_{m, i}^{(k)}$ are the nonnegative integers defined by Reiner as described in Remark~\ref{rmk:Reinernecklaces}. 
\end{itemize}
Setting $q=1$, (\ref{eq:der1}) simplifies to
\begin{align}\label{eq:der2}
&\sum_{n \geq 0} \frac{u^n}{(1-t)^{n+1}} \left( \sum_{\omega \in B_n} t^{\des(\omega)+1} \prod_i x_i^{m_i(\lambda(\omega))} y_i^{m_i(\mu(\omega))}\right)\notag \\
=&\sum_{k\geq 0} t^k \prod_{m \geq 1}\biggr[ \biggr( (1-x_m u^m)^{-\sum_{i=0}^{(k-1)m} D_{m, i}^{(k)}} \biggr) \cdot \biggr((1+y_m u^m)^{\sum_{i=0}^{(k-1)m} P_{m, i}^{(k)}}\biggr) \biggr] \\
=& 1+\sum_{k\geq 1} t^k \prod_{m \geq 1}\biggr[ \biggr( (1-x_m u^m)^{-\sum_{i=0}^{(k-1)m} D_{m, i}^{(k)}} \biggr) \cdot \biggr((1+y_m u^m)^{\sum_{i=0}^{(k-1)m} P_{m, i}^{(k)}}\biggr) \biggr].\notag
\end{align}
As noted in the discussion surrounding \eqref{eq:reiner_descents} and \eqref{eq:des_gen_function}, we can replace $\des$ in the first expression of \eqref{eq:der2} with $\des_{B}$. The result then follows by applying Remark~\ref{rmk:Reinernecklaces} on the last expression, noting that in the boundary case $m=k=1$, the exponent of $(1-x_1u)$ is  $N(2k-1, 2)+1$ from~\eqref{eqn:Dmi}, while the exponent of $(1-y_1u)$ is precisely $N(2k-1, 2)$ from~\eqref{eqn:Pmi}. 
\end{proof}

For a fixed bi-partition $(\lambda,\mu)$ of $n$, we can now derive the following expression for the generating function $B_{\lambda,\mu}(t)$  $=\sum_{\omega \in C_{\lambda,\mu}} t^{\des_B(\omega)+1}$ of descents over the conjugacy class $C_{\lambda,\mu}$.

\begin{proposition}\label{prop:all}
Let 
$\lambda=(1^{{m_1(\lambda)}},2^{{m_2(\lambda)}},\ldots)$ and $\mu=(1^{{m_1(\mu)}},2^{{m_2(\mu)}},\ldots)$. Then 
$B_{\lambda,\mu}/(1-t)^{n+1}$ can be expressed as
\begin{equation} \label{eq:conjugacy1}
\begin{split}
     t \delta_{((1^n), \emptyset)}+\sum_{k \geq 2} & t^k  \left(\prod_{i\ge 1} \binom{N(2k-1, 2i)}{m_i(\mu)} \,
\prod_{i\ge 2} \binom{N(2k-1, 2i)+m_i(\lambda)-1}{m_i(\lambda)}\right)   \binom{N(2k-1, 2)+m_1(\lambda)}{m_1(\lambda)},\\
\end{split}
\end{equation}
which is also equivalent to
\begin{equation*}
\, t \delta_{((1^n), \emptyset)}+\sum_{k \geq 2} t^k \frac{m_1(\lambda)+k-1}{k-1} \prod_{i \geq 1} \binom{N(2k-1, 2i)-1+m_i(\lambda)}{m_i(\lambda)} \binom{N(2k-1, 2i)}{m_i(\mu)}.
\end{equation*}
Here $\delta_{((1^n), \emptyset)}$ is the Kronecker delta which is 1 for the conjugacy class  $\lambda,\mu=((1^n), \emptyset)$, and zero otherwise.
\end{proposition}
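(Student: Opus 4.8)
The plan is to extract the coefficient of $u^n\prod_i x_i^{m_i(\lambda)}y_i^{m_i(\mu)}$ from both sides of the identity in Theorem~\ref{thm:all}. On the left-hand side this coefficient is exactly $B_{\lambda,\mu}(t)/(1-t)^{n+1}$: taking the coefficient of $u^n$ produces $\frac{1}{(1-t)^{n+1}}\sum_{\omega\in B_n} t^{\des_B(\omega)+1}\prod_i x_i^{m_i(\lambda(\omega))}y_i^{m_i(\mu(\omega))}$, and then extracting the monomial $\prod_i x_i^{m_i(\lambda)}y_i^{m_i(\mu)}$ restricts the sum over $\omega$ to the conjugacy class $C_{\lambda,\mu}$. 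So it remains to extract the same coefficient from the right-hand side $1+\sum_{k\geq 1} t^k\frac{1}{1-x_1 u}\prod_{m\geq 1}\left(\frac{1+y_m u^m}{1-x_m u^m}\right)^{N(2k-1,2m)}$.

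\textbf{Edge contributions.} Assume $(\lambda,\mu)$ is a bi-partition of $n\geq 1$, so the target monomial is nontrivial. The constant term $1$ contributes nothing. For $k=1$, a direct evaluation of~\eqref{eq:mobius} gives $N(1,2m)=\tfrac{1}{2m}\sum_{d\mid m,\,d\text{ odd}}\mu(d)(1^{m/d}-1)=0$ for every $m$, so the $k=1$ summand collapses to $t\cdot\frac{1}{1-x_1u}=t\sum_{j\geq 0}x_1^j u^j$, whose coefficient of $u^n\prod_i x_i^{m_i(\lambda)}y_i^{m_i(\mu)}$ is $t$ when $(\lambda,\mu)=((1^n),\emptyset)$ and $0$ otherwise; this is exactly the $t\,\delta_{((1^n),\emptyset)}$ term.

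\textbf{The terms $k\geq 2$.} For each such $k$, expand the remaining factor as a power series in $u$: combining $\frac{1}{1-x_1u}$ with the $m=1$ denominator yields $(1-x_1u)^{-(N(2k-1,2)+1)}$, and the generalized binomial theorem gives $(1-x_1u)^{-(N+1)}=\sum_{b\geq 0}\binom{N+b}{b}x_1^b u^b$, $(1-x_mu^m)^{-N(2k-1,2m)}=\sum_{b\geq 0}\binom{N(2k-1,2m)+b-1}{b}x_m^b u^{mb}$ for $m\geq 2$, and $(1+y_mu^m)^{N(2k-1,2m)}=\sum_{a\geq 0}\binom{N(2k-1,2m)}{a}y_m^a u^{ma}$ for $m\geq 1$; here we use that each $N(2k-1,2m)$ is a nonnegative integer by Remark~\ref{rmk:Reinernecklaces}, and that for fixed $n$ only the finitely many $m\leq n$ can contribute, so the products are finite. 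Selecting $x_i^{m_i(\lambda)}$ and $y_i^{m_i(\mu)}$ from each factor forces $b=m_i(\lambda)$ and $a=m_i(\mu)$, after which the total power of $u$ is automatically $\sum_i i\,m_i(\lambda)+\sum_i i\,m_i(\mu)=|\lambda|+|\mu|=n$, imposing no further constraint. Collecting the resulting product of binomial coefficients gives precisely the first displayed formula.

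\textbf{The second form.} From~\eqref{eq:mobius}, $N(2k-1,2)=\tfrac{1}{2}\mu(1)\big((2k-1)-1\big)=k-1$, which is nonzero for $k\geq 2$. Then the single factor $\binom{N(2k-1,2)+m_1(\lambda)}{m_1(\lambda)}$ from the first formula can be rewritten via $\binom{N+a}{a}=\frac{N+a}{N}\binom{N+a-1}{a}$ (valid since $N=k-1\neq 0$) as $\frac{m_1(\lambda)+k-1}{k-1}\binom{(k-1)-1+m_1(\lambda)}{m_1(\lambda)}$, allowing the $i=1$ factor to be absorbed into the product over all $i\geq 1$ and producing the second expression. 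I expect the only delicate points to be the careful bookkeeping of the asymmetry between $m=1$ (the extra $\frac{1}{1-x_1u}$ factor) and $m\geq 2$, together with checking that all the binomial coefficients are well-defined under the standard conventions when some $N(2k-1,2m)$ vanish; neither is a genuine obstacle.
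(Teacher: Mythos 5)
Your proof is correct and follows essentially the same route as the paper: both extract the coefficient of $\prod_i x_i^{m_i(\lambda)}y_i^{m_i(\mu)}$ from Theorem~\ref{thm:all} using $N(1,2m)=0$ for the $k=1$ delta term, the generalized binomial expansion (with the $m=1$ factor absorbing the extra $\tfrac{1}{1-x_1u}$) for $k\ge 2$, and the identity $N(2k-1,2)=k-1$ together with a standard binomial recurrence for the second form. The only cosmetic difference is that the paper sets $u=1$ before extracting coefficients, whereas you track the power of $u$ and note it is automatically $n$; these are equivalent.
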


\begin{proof}
We set $u=1$ in the generating function of Theorem \ref{thm:all} and extract the coefficients of $x_i$ and $y_i$ for all $i$. We then examine the coefficient of $t^k$ in the right-hand side of Theorem \ref{thm:all}.  Noting that $N(1, 2i)=0$, this coefficient can then be written as  
\begin{equation*} 
\begin{split}
    & \, \frac{1}{1-x_1} \prod_{i \geq 1} \left(\frac{1+y_i}{1-x_i}\right)^{N(2k-1, 2i)}
= \left(\prod_{i \geq 1}(1+y_i)^{N(2k-1, 2i)} \, \prod_{i \geq 2}(1-x_i)^{-N(2k-1, 2i)}\right) 
(1-x_1)^{-N(2k-1, 2)-1}.
\end{split}
\end{equation*}
For $k=1$ the contribution comes only from powers of $x_1$. For $k\ge 2,$ the coefficient of the product $\prod_i x_i^{m_i(\lambda)} y_i^{m_i(\mu)}$ is thus 
\begin{equation*}
\begin{split}
    & \left(\prod_{i\ge 1} \binom{N(2k-1, 2i)}{m_i(\mu)} \,
\prod_{i\ge 2} \binom{N(2k-1, 2i)+m_i(\lambda)-1}{m_i(\lambda)}\right)\cdot\binom{N(2k-1, 2)+1+m_1(\lambda)-1}{m_1(\lambda)},
\end{split}
\end{equation*}
where we have used the  binomial formula 
$(1-s)^{-m}=\sum_{j\ge 0}s^j \binom{m+j-1}{j}$ for $m>0$. This gives the first expression \eqref{eq:conjugacy1}. 
Using $\binom{a+1}{r+1}=\frac{a+1}{r+1}\binom{a}{r}$  and the fact that $N(2k-1,2)=k-1$ in the third factor above, the second expression follows.
\end{proof}

\begin{corollary}
The number of permutations $\omega \in B_n$ that are of cycle type $(\lambda,\mu)$ and have $d-1$ descents is
\begin{equation}
\begin{split}
    & \, \sum_{k=1}^d (-1)^{d-k} \binom{n+1}{d-k} \binom{m_1(\lambda)+k-1}{m_1(\lambda)} \cdot \prod_{i\ge 2} \binom{N(2k-1, 2i)+m_i(\lambda)-1}{m_i(\lambda)} \prod_{i\ge 1} \binom{N(2k-1, 2i)}{m_i(\mu)}.
\end{split}
\end{equation}
\end{corollary}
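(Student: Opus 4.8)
The plan is to extract the coefficient of $t^d$ from the rational generating function $B_{\lambda,\mu}(t)/(1-t)^{n+1}$ and then clear the denominator. Concretely, from Proposition~\ref{prop:all} we know that $B_{\lambda,\mu}(t)/(1-t)^{n+1} = \sum_{k\ge 1} a_k t^k$, where $a_1 = \delta_{((1^n),\emptyset)}$ and for $k\ge 2$,
\[
a_k = \frac{m_1(\lambda)+k-1}{k-1}\prod_{i\ge 1}\binom{N(2k-1,2i)-1+m_i(\lambda)}{m_i(\lambda)}\binom{N(2k-1,2i)}{m_i(\mu)}.
\]
The first step is to observe that the $k=1$ term of this series can be folded into the same product formula: since $N(1,2i)=0$ for all $i$, the product $\prod_{i\ge 1}\binom{-1+m_i(\lambda)}{m_i(\lambda)}\binom{0}{m_i(\mu)}$ vanishes unless all $m_i(\mu)=0$ and all $m_i(\lambda)=0$ for $i\ge 2$, i.e.\ unless $(\lambda,\mu)=((1^n),\emptyset)$, in which case the empty-ish product equals $1$; and the prefactor $\frac{m_1(\lambda)+k-1}{k-1}$ at $k=1$ formally reads $\frac{n+0}{0}$, which is why the $k=1$ term is written separately as a Kronecker delta. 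The cleanest way to handle this is to note that for $k\ge 1$ the coefficient $a_k$ equals $\binom{m_1(\lambda)+k-1}{m_1(\lambda)}\cdot\frac{1}{\text{(something)}}$... more precisely, using $\binom{N(2k-1,2)+m_1(\lambda)}{m_1(\lambda)}$ in place of the prefactor times the $i=1,\lambda$ binomial (as in the first displayed expression of Proposition~\ref{prop:all}), one gets a single uniform formula valid for all $k\ge 1$, namely
\[
a_k=\binom{N(2k-1,2)+m_1(\lambda)}{m_1(\lambda)}\prod_{i\ge 2}\binom{N(2k-1,2i)+m_i(\lambda)-1}{m_i(\lambda)}\prod_{i\ge 1}\binom{N(2k-1,2i)}{m_i(\mu)},
\]
and one checks the $k=1$ case reduces to $\binom{n}{m_1(\lambda)}\delta_{(\lambda,\mu)=((1^n),\emptyset)}$ because $N(1,2)=0$ forces $m_1(\lambda)=n$ wait---$\binom{0+n}{n}=1$, good, and all other factors force the delta.

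The second step is the extraction itself. Writing $B_{\lambda,\mu}(t)=(1-t)^{n+1}\sum_{k\ge 1}a_k t^k$ and using $(1-t)^{n+1}=\sum_{j\ge 0}(-1)^j\binom{n+1}{j}t^j$, the coefficient of $t^d$ in $B_{\lambda,\mu}(t)$ is
\[
[t^d]\,B_{\lambda,\mu}(t)=\sum_{j+k=d}(-1)^j\binom{n+1}{j}a_k=\sum_{k=1}^{d}(-1)^{d-k}\binom{n+1}{d-k}a_k.
\]
Since $B_{\lambda,\mu}(t)=\sum_\omega t^{\des_B(\omega)+1}$ ranges over $\omega\in C_{\lambda,\mu}$, the coefficient $[t^d]B_{\lambda,\mu}(t)$ is exactly the number of $\omega\in B_n$ of cycle type $(\lambda,\mu)$ with $\des_B(\omega)+1=d$, i.e.\ with $d-1$ descents. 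Substituting the uniform expression for $a_k$ and noting $N(2k-1,2)=k-1$ (which turns the $i=1,\lambda$ factor of $a_k$ into $\binom{k-1+m_1(\lambda)}{m_1(\lambda)}=\binom{m_1(\lambda)+k-1}{m_1(\lambda)}$) yields precisely the claimed formula
\[
\sum_{k=1}^d(-1)^{d-k}\binom{n+1}{d-k}\binom{m_1(\lambda)+k-1}{m_1(\lambda)}\prod_{i\ge 2}\binom{N(2k-1,2i)+m_i(\lambda)-1}{m_i(\lambda)}\prod_{i\ge 1}\binom{N(2k-1,2i)}{m_i(\mu)}.
\]

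The only genuinely delicate point---the main obstacle---is the bookkeeping at the boundary term $k=1$: one must verify that plugging $k=1$ into the general summand reproduces the $t\,\delta_{((1^n),\emptyset)}$ contribution and does not introduce spurious terms, handling the degenerate prefactor $\frac{m_1(\lambda)+k-1}{k-1}$ correctly. This is resolved by working with the first (non-simplified) expression in Proposition~\ref{prop:all}, where the $k=1$ case is legitimately separate but, because $N(1,2i)=0$ and $\binom{0}{m}=\delta_{m,0}$ while $\binom{0+m_1(\lambda)}{m_1(\lambda)}=1$, all of its binomial factors collapse to the single nonzero case $(\lambda,\mu)=((1^n),\emptyset)$; for every other cycle type the $k=1$ summand is $0$, so it may be included or excluded from the sum freely. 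Everything else is a routine Cauchy-product expansion and an appeal to the identity $N(2k-1,2)=k-1$ from~\eqref{eq:mobius}, which gives $\frac12\sum_{d\mid 1, d\text{ odd}}\mu(d)((2k-1)^{1/d}-1)=\frac12((2k-1)-1)=k-1$.
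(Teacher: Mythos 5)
Your proposal is correct and follows essentially the same route as the paper: start from Proposition~\ref{prop:all}, multiply the series for $B_{\lambda,\mu}(t)/(1-t)^{n+1}$ by $(1-t)^{n+1}$, and read off the coefficient of $t^d$ via the Cauchy product. Your extra care with the $k=1$ boundary term (checking that $N(1,2i)=0$ collapses that summand to the Kronecker delta $\delta_{((1^n),\emptyset)}$, so the general summand may be used uniformly for all $k\ge 1$) is a correct and worthwhile elaboration of a detail the paper's one-line proof leaves implicit.
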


\begin{proof}
We proceed as in the proof of Proposition \ref{prop:all}, but multiply the expressions by $(1-t)^{n+1}$ to obtain the generating function $B_{\lambda,\mu}(t)$. The result follows by taking into account the coefficient of $t^d$.
\end{proof}

\begin{remark}
For $n=0$, we adopt the convention that $B_0$ is the trivial group containing a single permutation $\omega$ with $\des_B(\omega)=0$ and $m_i(\lambda)=m_i(\mu)=0$ for all $i$, for the single trivial conjugacy class $(\emptyset, \emptyset)$. We see that Proposition~\ref{prop:all} still holds:
$$B_{\emptyset, \emptyset}(t)=t=(1-t) \sum_{k \geq 1} t^k.$$
\end{remark}

In certain cases, the expression from Proposition \ref{prop:all} simplifies. We give some examples in the next result.

\begin{proposition}\label{prop:SpecialCases}
We have the following for $n\ge 1$:
\begin{enumerate}
    \item $B_{((1^n),\emptyset)}(t)=t$,
    \item $B_{(\emptyset, (1^n))}(t)=t^{n+1}$,
    \item for $n\ge 2$, $B_{((n),\emptyset)}(t)=(1-t)^{n+1} \sum_{k\ge 2} t^k N(2k-1,2n)$,
    \item  $B_{(\emptyset, (n))}(t)=(1-t)^{n+1} \sum_{k\ge 2} t^k N(2k-1,2n)$, and
    \item if $\lambda$ and $\mu$ both have distinct parts and $m_1(\lambda)=1=m_1(\mu)$, then 
    \[B_{\lambda,\mu}(t)=B_{\mu, \lambda}(t).\] 
\end{enumerate}
\end{proposition}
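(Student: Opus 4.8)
The plan is to derive all five identities by specializing Proposition~\ref{prop:all} and then clearing the factor $(1-t)^{n+1}$, using repeatedly the expansion $(1-t)^{-(n+1)}=\sum_{j\ge 0}\binom{n+j}{n}t^j$ together with the values $N(1,2i)=0$ for all $i$ and $N(2k-1,2)=k-1$. In each of the listed cases almost every binomial factor in the formula of Proposition~\ref{prop:all} equals $1$, so that $B_{\lambda,\mu}(t)/(1-t)^{n+1}$ reduces to a short explicit power series in $t$.

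For (1)--(4) I would simply substitute the given cycle type into the first expression of Proposition~\ref{prop:all}. When $\lambda=(1^n)$, $\mu=\emptyset$, only the Kronecker term and the $i=1$ factor $\binom{N(2k-1,2)+m_1(\lambda)}{m_1(\lambda)}=\binom{n+k-1}{n}$ survive, giving $B_{(1^n),\emptyset}(t)/(1-t)^{n+1}=t+\sum_{k\ge 2}\binom{n+k-1}{n}t^k=t(1-t)^{-(n+1)}$, so $B_{(1^n),\emptyset}(t)=t$. (Alternatively, $C_{(1^n),\emptyset}=\{\,[1,2,\dots,n]\,\}$ and $\des_B$ of the identity is $0$.) When $\lambda=\emptyset$, $\mu=(1^n)$, the Kronecker term vanishes and only $\binom{N(2k-1,2)}{m_1(\mu)}=\binom{k-1}{n}$ survives, so $B_{\emptyset,(1^n)}(t)/(1-t)^{n+1}=\sum_{k\ge n+1}\binom{k-1}{n}t^k=t^{n+1}(1-t)^{-(n+1)}$ and $B_{\emptyset,(1^n)}(t)=t^{n+1}$; equivalently $C_{\emptyset,(1^n)}=\{\,[-1,\dots,-n]\,\}$, which has a descent in every position of $\{0,1,\dots,n-1\}$. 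For $\lambda=(n)$ (resp.\ $\mu=(n)$) with $n\ge 2$, the Kronecker term vanishes since $(n)\ne(1^n)$, $m_1$ of the relevant partition is $0$, and the only nontrivial factor comes from the single part of size $n$, equal to $\binom{N(2k-1,2n)}{1}=N(2k-1,2n)$; hence in both cases $B_{\lambda,\mu}(t)/(1-t)^{n+1}=\sum_{k\ge 2}t^k N(2k-1,2n)$, which is the stated formula.

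For (5) I would use the second (product) form of Proposition~\ref{prop:all}. The hypothesis $m_1(\lambda)=m_1(\mu)=1$ forces $\lambda\ne\emptyset$ and $\mu\ne\emptyset$, so neither $(\lambda,\mu)$ nor $(\mu,\lambda)$ equals $((1^n),\emptyset)$ and both Kronecker terms vanish. The prefactor $\frac{m_1(\lambda)+k-1}{k-1}$ is invariant under $\lambda\leftrightarrow\mu$ because $m_1(\lambda)=m_1(\mu)$. Since $\lambda,\mu$ have distinct parts, $m_i(\lambda),m_i(\mu)\in\{0,1\}$ for all $i$, and checking the four possibilities shows $\binom{N-1+m_i(\lambda)}{m_i(\lambda)}\binom{N}{m_i(\mu)}$ (with $N=N(2k-1,2i)$) equals $1,\,N,\,N,\,N^2$ in the cases $(0,0),(1,0),(0,1),(1,1)$ and is therefore symmetric in $m_i(\lambda)$ and $m_i(\mu)$. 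Consequently the coefficient of $t^k$ is unchanged under the swap for every $k$, so $B_{\lambda,\mu}(t)=B_{\mu,\lambda}(t)$.

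The computations are routine; the only delicate point is the bookkeeping of the boundary terms in Proposition~\ref{prop:all}, namely the Kronecker delta $\delta_{((1^n),\emptyset)}$ and the $i=1$ factor $\binom{N(2k-1,2)+m_1(\lambda)}{m_1(\lambda)}$, which is written asymmetrically in $\lambda$ versus $\mu$. In particular, in (5) one must check that the distinct-parts hypothesis is exactly what makes each per-part factor symmetric (symmetry would fail if, say, some $m_i(\lambda)=2$), and that the hypothesis $m_1(\lambda)=m_1(\mu)=1$ both kills the Kronecker term and neutralizes the asymmetric-looking $i=1$ factor.
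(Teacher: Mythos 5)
Your proposal is correct and follows essentially the same route as the paper: both proofs specialize Proposition~\ref{prop:all} (using $N(1,2m)=0$, $N(2k-1,2)=k-1$, and the negative binomial series $(1-s)^{-m}=\sum_{j\ge 0}\binom{m+j-1}{j}s^j$) to reduce each case to a short power series, and both verify (5) by observing that under the distinct-parts and $m_1(\lambda)=m_1(\mu)=1$ hypotheses every factor in the second form of Proposition~\ref{prop:all} is symmetric under $\lambda\leftrightarrow\mu$. The only cosmetic difference is that you work from the first displayed expression in cases (1)--(2) where the paper uses the second, and you add the (correct) sanity checks via the singleton conjugacy classes.
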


\begin{proof} From \eqref{eq:mobius}, we see that \[N(2k-1,2)=k-1, \, N(2k-2,4)=k(k-1) \text{ for } k\ge 2, \text{ and } N(1, 2m)=0 \text{ for } m\ge 1.\] 
We will also need the binomial formula for negative exponents
\[(1-s)^{-m}=\sum_{j\ge 0}s^j \binom{m+j-1}{j}.\]
\begin{enumerate}
    \item Here $m_1(\lambda)=n$, $m_j(\lambda)=0$ for $j\ne 1$, and $m_i(\mu)=0$ for $i\ge 1$. Thus  
    \begin{equation*}
        \begin{split}
            \frac{B_{((1^n),\emptyset)}(t)}{(1-t)^{n+1}}
            &= t+\sum_{k\ge 2} t^k \frac{n+k-1}{k-1} \binom{n+k-2}{n}\\
            &=t\left(1+\sum_{k\ge 2} t^{k-1}\binom{n+k-1}{k-1}\right)\\
            &=t\left(1+\sum_{j\ge 1}t^{j} \binom{(n+1)+j-1}{j}\right) \\
            & =t(1-t)^{-(n+1)}.
        \end{split}
    \end{equation*}
    \item Now we have $m_i(\lambda)=0$ for $i\ge 1$,
    and $m_1(\mu)=n,$ and $m_j(\mu)=0$ for $j\ne 1$.  Since $N(2k-1,2)=k-1$, we obtain 
    \begin{equation*}
         \begin{split}
            \frac{B_{(\emptyset, (1^n))}(t)}{(1-t)^{n+1}}
            &=\sum_{k\ge 2}t^k\binom{k-1}{n}\\
            &=t^{n+1}\sum_{k\ge n+1}t^{k-n-1}\binom{(n+1)+(k-n-1)-1}{k-n-1} \\
            & =t^{n+1} (1-t)^{-(n+1)}.\\
        \end{split}
    \end{equation*}
    \item Let $n\ge 2$. Now $m_n(\lambda)=1$, $m_i(\lambda)=0$ for $i\ne 1$, while $m_i(\mu)=0$ for $i\ge 1.$  The result follows.
    \item This follows similarly.
    \item In this case we have $0\le m_i(\lambda), m_j(\mu)\le 1$ for $i,j\ge 1$, and $m_1(\lambda)=m_1(\mu)=1$. Then 
    \begin{equation*}
         \begin{split}
            \frac{B_{\lambda,\mu}(t)}{(1-t)^{n+1}}
            & =\sum_{k\ge 2} t^k \frac{k}{k-1} 
            \prod_{i\ge 1, m_i(\lambda)=1}N(2k-1, 2i)
            \prod_{j\ge 1, m_j(\mu)=1} N(2k-1,2j) \\
            & 
            =\frac{B_{\mu, \lambda}(t)}{(1-t)^{n+1}}. \qedhere
        \end{split}
    \end{equation*}
\end{enumerate}
\end{proof}   

We next derive an elegant analogue of a result of Fulman \cite[Proof of Theorem 2]{FulmanJCTA1998}, which will relate $B_{\lambda,\mu}(t)$ and $B_n(t)$. Recall from Definition~\ref{def:Delta-biptn} that $\Delta^2(\lambda,\mu)=m_1(\lambda)^2-m_1(\mu)^2$, and from Lemma~\ref{lem:classsize}, we have $$|C_{\lambda,\mu}|=\frac{2^n n!}{2^{\ell(\lambda)} z_\lambda 2^{\ell(\mu)} z_\mu},$$ where $\ell(\lambda)=\sum_i  m_i(\lambda)$, $\ell(\mu)=\sum_i  m_i(\mu)$, $z_\lambda=\prod_i i^{m_i(\lambda)} m_i(\lambda)!$, and $z_\mu=\prod_i i^{m_i(\mu)} m_i(\mu)!$. Hence, from Definition~\ref{def:doublezee},
 \[z_{\lambda,\mu}=2^{\ell(\lambda)} z_\lambda 2^{\ell(\mu)} z_\mu=\prod_{i\geq 1} (2i)^{m_i(\lambda)+m_i(\mu)} m_i(\lambda)! m_i(\mu)!.\]
Finally, let $s_n^i$ be the Stirling number of the first kind, whose absolute value is the number of permutations in $\mathfrak{S}_n$ with $i$ disjoint cycles. The following generating functions  are well known  (see \cite[Chapter 1]{StanEC2}):
\begin{equation}\label{lm:stirling}
\begin{split}
&\sum_{i=1}^n s_n^i y^i=y(y-1)\cdots (y-n+1)=n!\,\binom{y}{n},\\
&\sum_{i=1}^n |s_n^i| y^i=y(y+1)\cdots(y+n-1)=
n!\,\binom{y+n-1}{n}.
\end{split}
\end{equation}

\begin{theorem}\label{thm:expansion}
Let $C_{\lambda,\mu}$ be a conjugacy class of $B_n$, let $B_n(t)=\sum_{\omega \in B_n} t^{\des_B(\omega)+1}$, and let $B_{\lambda,\mu}(t)=\sum_{\omega \in C_{\lambda,\mu}} t^{\des_B(\omega)+1}$. Then
\begin{equation}\label{eq:B}
\frac{B_{\lambda,\mu}(t)}{|C_{\lambda,\mu}|}=\frac{B_n(t)}{2^n n!}+\frac{1-t}{2n} \frac{B_{n-1}(t)}{2^{n-1} (n-1)!} \Delta^2(\lambda,\mu)+(1-t)^2 g(t),
\end{equation}
where $g(t)$ is some polynomial in $t$. 
\end{theorem}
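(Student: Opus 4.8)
The plan is to follow Fulman's strategy for $\mathfrak{S}_n$ \cite[Proof of Theorem 2]{FulmanJCTA1998}, but starting from the explicit coefficient extraction already provided by Proposition~\ref{prop:all}. For $k\ge 2$ write $b_k := [t^k]\,\dfrac{B_{\lambda,\mu}(t)}{(1-t)^{n+1}}$, and recall that $[t^1]\,\dfrac{B_{\lambda,\mu}(t)}{(1-t)^{n+1}}=\delta_{((1^n),\emptyset)}$ while the constant term is $0$. Since $N(2k-1,2)=k-1$, the first expression in Proposition~\ref{prop:all} gives, for $k\ge 2$,
\[
b_k = \binom{k-1+m_1(\lambda)}{m_1(\lambda)}\;\prod_{i\ge 2}\binom{N(2k-1,2i)+m_i(\lambda)-1}{m_i(\lambda)}\;\prod_{i\ge 1}\binom{N(2k-1,2i)}{m_i(\mu)}.
\]
Because each $N(2k-1,2i)$ is a polynomial in $k$ and each $\binom{x}{m}$ or $\binom{x+m-1}{m}$ is a polynomial in $x$, the right-hand side is a single polynomial in $k$ (equivalently in $r:=2k-1$), valid for all $k\ge 2$. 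The input we will need about $N$ is read off Definition~\ref{def:exponent-Reinernecklaces}: $N(r,2i)=\tfrac{1}{2i}r^i+(\text{lower order})$, where for $i=1$ the next term is the constant $-\tfrac12$, and for $i\ge 2$ every remaining term has degree at most $\lfloor i/3\rfloor\le i-2$; in particular, the coefficient of $r^{i-1}$ in $N(r,2i)$ vanishes for $i\ge 2$.

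The heart of the argument is computing the top two coefficients of $b_k$ as a polynomial in $r$. Each $\lambda$-factor indexed by $i$ has degree $i\,m_i(\lambda)$ and leading coefficient $\tfrac{1}{(2i)^{m_i(\lambda)}m_i(\lambda)!}$ (for $i=1$ this uses $\binom{(r-1)/2+m}{m}=\tfrac{r^m}{2^m m!}+\cdots$, for $i\ge2$ it uses $N(r,2i)^m=\tfrac{r^{im}}{(2i)^m}+\cdots$), and likewise for the $\mu$-factors. Multiplying over all $i$ and both partitions, and using $z_{\lambda,\mu}=\prod_{i\ge1}(2i)^{m_i(\lambda)+m_i(\mu)}m_i(\lambda)!\,m_i(\mu)!$ together with $\sum_i i\,(m_i(\lambda)+m_i(\mu))=n$, the leading term of $b_k$ is $r^n/z_{\lambda,\mu}$. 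For the $r^{n-1}$ coefficient, the previous paragraph shows every factor with $i\ge 2$ has vanishing subleading coefficient, so only the two $i=1$ factors contribute. A short computation with $\sum_{\ell=1}^{m}(2\ell-1)=\sum_{\ell=0}^{m-1}(2\ell+1)=m^2$ gives $\binom{(r-1)/2+m_1(\lambda)}{m_1(\lambda)}=\tfrac{1}{2^{m_1(\lambda)}m_1(\lambda)!}\bigl(r^{m_1(\lambda)}+m_1(\lambda)^2 r^{m_1(\lambda)-1}+\cdots\bigr)$ and $\binom{(r-1)/2}{m_1(\mu)}=\tfrac{1}{2^{m_1(\mu)}m_1(\mu)!}\bigl(r^{m_1(\mu)}-m_1(\mu)^2 r^{m_1(\mu)-1}+\cdots\bigr)$, so the $r^{n-1}$ coefficient of $b_k$ equals $\bigl(m_1(\lambda)^2-m_1(\mu)^2\bigr)/z_{\lambda,\mu}=\Delta^2(\lambda,\mu)/z_{\lambda,\mu}$. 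Thus $b_k=\tfrac{1}{z_{\lambda,\mu}}\bigl(r^n+\Delta^2(\lambda,\mu)\,r^{n-1}+\widetilde q(r)\bigr)$ with $\widetilde q$ a polynomial of degree at most $n-2$, for every $k\ge 2$.

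It remains to assemble this. By Lemma~\ref{lem:classsize}, $|C_{\lambda,\mu}|=2^n n!/z_{\lambda,\mu}$, and by Proposition~\ref{prop:Bn_genfunction}, $[t^k]\tfrac{B_n(t)}{(1-t)^{n+1}}=(2k-1)^n$ and $[t^k]\tfrac{B_{n-1}(t)}{(1-t)^n}=(2k-1)^{n-1}$; also $2n\cdot 2^{n-1}(n-1)!=2^n n!$. Define the polynomial
\[
E(t):=\frac{B_{\lambda,\mu}(t)}{|C_{\lambda,\mu}|}-\frac{B_n(t)}{2^n n!}-\frac{1-t}{2n}\cdot\frac{B_{n-1}(t)}{2^{n-1}(n-1)!}\,\Delta^2(\lambda,\mu).
\]
Dividing by $(1-t)^{n+1}$ and using $\tfrac{(1-t)B_{n-1}(t)}{(1-t)^{n+1}}=\tfrac{B_{n-1}(t)}{(1-t)^n}$, the computation above shows $\tfrac{E(t)}{(1-t)^{n+1}}=\sum_{k\ge1}c_k t^k$ where $c_k=\tfrac{1}{2^n n!}\,\widetilde q(2k-1)$ is a polynomial in $k$ of degree at most $n-2$ for all $k\ge2$, and $c_1$ is a constant (no $k=0$ term appears since $E(0)=0$). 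Letting $c(\cdot)$ denote the interpolating polynomial of degree $\le n-2$ with $c(k)=c_k$ for $k\ge2$, the standard fact that $\sum_{k\ge0}c(k)t^k=R(t)/(1-t)^{n-1}$ for some polynomial $R$ yields $\tfrac{E(t)}{(1-t)^{n+1}}=\bigl((c_1-c(1))t-c(0)\bigr)+\tfrac{R(t)}{(1-t)^{n-1}}$, and multiplying back by $(1-t)^{n+1}$ gives $E(t)=(1-t)^2 g(t)$ with $g(t)=\bigl((c_1-c(1))t-c(0)\bigr)(1-t)^{n-1}+R(t)$ a polynomial. This is exactly \eqref{eq:B}.

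I expect the main obstacle to be the second paragraph: pinning down that the subleading coefficient of $b_k$ is precisely $\Delta^2(\lambda,\mu)/z_{\lambda,\mu}$, which relies entirely on the structural fact that $N(2k-1,2i)$ carries no $r^{i-1}$ term for $i\ge 2$, so that cycles of length $\ge2$ are invisible to both top-order coefficients. This is the mechanism forcing the dependence on $m_1(\lambda)$ and $m_1(\mu)$ alone — the same phenomenon recorded for first moments in Section~\ref{Bnstatistics}. Minor points to handle with care are the boundary case $m=k=1$ in Proposition~\ref{prop:all} (already absorbed into the $\delta_{((1^n),\emptyset)}$ term, which only affects the constant $c_1$) and a sanity check that the degenerate conjugacy classes $((1^n),\emptyset)$ and $(\emptyset,(1^n))$ are consistent with \eqref{eq:B}, which follows immediately from Proposition~\ref{prop:SpecialCases}.
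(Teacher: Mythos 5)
Your proof is correct and follows essentially the same route as the paper's: extract the coefficient of $t^k$ from Proposition~\ref{prop:all}, view it as a polynomial in $r=2k-1$, and show its top two coefficients are $1/z_{\lambda,\mu}$ and $\Delta^2(\lambda,\mu)/z_{\lambda,\mu}$, the key structural input being that $N(2k-1,2i)$ carries no $(2k-1)^{i-1}$ term for $i\ge 2$ so only the $i=1$ factors affect the subleading coefficient. The only cosmetic differences are that you expand the binomial factors directly via $\sum_{\ell=1}^{m}(2\ell-1)=m^2$ where the paper uses Stirling numbers of the first kind, and your interpolation argument for the $(1-t)^2 g(t)$ remainder is a more explicit rendering of the paper's observation that each lower power $(2k-1)^{n-j}$ with $j\ge 2$ contributes a term divisible by $(1-t)^j$.
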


\begin{proof}
From Proposition \ref{prop:Bn_genfunction}, we have
\begin{equation}
B_n(t)=(1-t)^{n+1} \sum_{k \geq 1}(2k-1)^n t^{k}.
\end{equation}
By Proposition \ref{prop:all} and Lemma~\ref{lem:classsize},
\begin{align}\label{eq:starting}
\frac{B_{\lambda,\mu}(t)}{|C_{\lambda,\mu}|} & =(1-t)^{n+1}\frac{z_{\lambda,\mu}}{2^n n!} \Biggl[t \delta_{((1^n), \emptyset)} \nonumber\\
& \phantom{=} +\sum_{k \geq 2} t^k \, \left(\prod_{i\ge 1} \binom{N(2k-1, 2i)}{m_i(\mu)} \,
\prod_{i\ge 2} \binom{N(2k-1, 2i)+m_i(\lambda)-1}{m_i(\lambda)}\right)\, \nonumber \\
& \phantom{=+\sum t^k \, } \cdot\binom{N(2k-1, 2)+m_1(\lambda)}{m_1(\lambda)}  \Biggr].
\end{align}
Using \eqref{lm:stirling} and \eqref{eq:starting}, the coefficient of $t^k$ for $k\ge 2$ is 
\begin{align}
&\left[\prod_{i\ge 1} \binom{N(2k-1, 2i)}{m_i(\mu)} \,
\prod_{i\ge 2} \binom{N(2k-1, 2i)+m_i(\lambda)-1}{m_i(\lambda)}\right]\,\cdot\binom{N(2k-1, 2)+m_1(\lambda)}{m_1(\lambda)} \notag\\
&=\left[\prod_{i\ge 1} \frac{\sum_{b=1}^{m_i(\mu)} s_{m_i(\mu)}^b (N(2k-1, 2i))^b}{m_i(\mu)!} \, \prod_{i\ge 2} \frac
{\sum_{a=1}^{m_i(\lambda)} |s_{m_i(\lambda)}^a| (N(2k-1, 2i))^a}{m_i(\lambda)!}\right] \notag \\
& \phantom{=} \cdot \left[ \frac{1}{m_1(\lambda)!} \sum_{a=1}^{m_1(\lambda)} |s_{m_1(\lambda)}^a| \left(\frac{(2k-1)+1}{2}\right)^a\right], \label{eq:a}
\end{align}
the last sum arising from the fact that $N(2k-1, 2)+1=k$. Combined with \eqref{eq:mobius}, we may view the coefficient of $t^k$ for $k\ge 2$ in \eqref{eq:starting} as a polynomial in $(2k-1)$. Therefore, we can view the entire expression for $B_{\lambda,\mu}$ as a polynomial in $(2k-1)$. 

The largest power of $(2k-1)$ in $B_{\lambda,\mu}$ is obtained by taking the largest power of $(2k-1)$ in each factor of \eqref{eq:a}. From  \eqref{eq:mobius}, $N(2k-1, 2i)$ is a polynomial in $(2k-1)$ with leading term $\frac{(2k-1)^i}{2i}$. It follows that  the largest power of $2k-1$ in  \eqref{eq:starting} occurs when we respectively take the summands corresponding to  $b=m_i(\mu)$ and $a=m_i(\lambda)$ in  the first two summations and $a=m_1(\lambda)$ in the third summation of \eqref{eq:a}, yielding
\begin{equation}\label{eq:degree_k}
\prod_{i\ge 1}\frac{(N(2k-1, 2i))^{m_i(\lambda)+m_i(\mu)}}{m_i(\lambda)!m_i(\mu)!} \sim \prod_{i\ge 1}\frac{\left(\frac{(2k-1)^i}{2i}\right)^{m_i(\lambda)+m_i(\mu)}}{m_i(\lambda)!m_i(\mu)!}.
\end{equation}
The term with the highest power of $(2k-1)$ in \eqref{eq:starting}, namely $(2k-1)^n$,  is therefore
\begin{align}
& \, \frac{z_{\lambda,\mu}}{2^n n!} (1-t)^{n+1} \sum_{{k \geq 2}} t^k \prod_{i \geq 1} \frac{\left(\frac{(2k-1)^i}{2i}\right)^{m_i(\lambda)+m_i(\mu)}}{m_i(\lambda)!m_i(\mu)!} \\
=& \,\frac{1}{2^n n!} (1-t)^{n+1} \sum_{{k \geq 2}} t^k (2k-1)^n=\frac{B_n(t)-t(1-t)^{n+1}}{2^n n!}.\nonumber
\end{align}

Again viewing~\eqref{eq:starting} as a polynomial in $(2k-1)$, now we identify the next leading term, i.e. the term with the second highest power of $(2k-1)$ in \eqref{eq:a}. We note that for $i>1$, setting $b\neq m_i(\mu)$ in the first summation of \eqref{eq:a}, or $a\neq m_i(\lambda)$ in the second summation of \eqref{eq:a}, or $d\neq 1$ in \eqref{eq:mobius} will all lower the degree on $(2k-1)$ in \eqref{eq:degree_k} by more than $1$. So for $i>1$, we still respectively set $b=m_i(\mu), a=m_i(\lambda)$ in each of the summations in \eqref{eq:a} and still  take $d=1$ in \eqref{eq:mobius}. 

The lowering of degree for $(2k-1)$ must therefore come from the $i=1$ term, by incorporating $(m_1(\lambda)+k-1)/(k-1)$ with the binomial coefficients. We take the next leading term of
\begin{equation}
\begin{split}
     & \, \binom{m_1(\lambda)+k-1}{m_1(\lambda)}\binom{k-1}{m_1(\mu)} 
     = \frac{\sum_{a=1}^{m_1(\lambda)} |s_{m_1(\lambda)}^a| \frac{1}{2^a}((2k-1)+1)^a}{m_1(\lambda)!} \frac{\sum_{b=1}^{m_1(\mu)} s_{m_1(\mu)}^b \frac{1}{2^b} ((2k-1)-1)^b}{m_1(\mu)!},
\end{split}
\end{equation}
which is given by
\begin{align}
&\left(|s_{m_1(\lambda)}^{m_1(\lambda)}|\cdot s_{m_1(\mu)}^{m_1(\mu)} \cdot \left(\frac{m_1(\lambda)-m_1(\mu)}{2}\right)+|s_{m_1(\lambda)}^{m_1(\lambda)-1}|\cdot s_{m_1(\mu)}^{m_1(\mu)}+|s_{m_1(\lambda)}^{m_1(\lambda)}|\cdot s_{m_1(\mu)}^{m_1(\mu)-1}\right) \notag \\ & \hspace{3cm} \cdot \frac{1}{m_1(\lambda)!m_1(\mu)!} \left(\frac{2k-1}{2}\right)^{m_1(\lambda)+m_1(\mu)-1}\notag \\
=&\left(\frac{m_1(\lambda)-m_1(\mu)}{2}+\binom{m_1(\lambda)}{2}-\binom{m_1(\mu)}{2}\right) \frac{1}{m_1(\lambda)!m_1(\mu)!} \left(\frac{2k-1}{2}\right)^{m_1(\lambda)+m_1(\mu)-1} \notag \\
=&\,\frac{1}{2m_1(\lambda)!m_1(\mu)!} \left(m_1(\lambda)^2-m_1(\mu)^2\right) \left(\frac{2k-1}{2}\right)^{m_1(\lambda)+m_1(\mu)-1}.
\end{align}
Hence the term with the second highest degree of $(2k-1)$  from the \eqref{eq:starting} is the term involving $(2k-1)^{n-1}$, namely, 
\begin{equation*}
\begin{split}
& \, \left(m_1(\lambda)^2-m_1(\mu)^2\right)\frac{1}{2^{n} n!} (1-t)^{n+1} \sum_{{k \geq 2}} t^k (2k-1)^{n-1} \\
=&\,\frac{1-t}{2n} \frac{B_{n-1}(t)-t(1-t)^n}{2^{n-1} (n-1)!} \Delta^2(\lambda,\mu)\\
=&\,\frac{1-t}{2n} \frac{B_{n-1}(t)}{2^{n-1} (n-1)!} \Delta^2(\lambda,\mu) -\frac{t(1-t)^{n+1}}{2^n n!}\Delta^2(\lambda,\mu) .
\end{split}
\end{equation*}

The result now follows by absorbing the term $\frac{t(1-t)^{n+1}}{2^n n!}\Delta^2(\lambda,\mu)$ into the terms with lower powers of $(2k-1)$.  For $r\ge 2$, each lower power $(2k-1)^{n-r}$ in \eqref{eq:starting} comes from the fraction 
$\frac{B_{n-r}(t)}{(1-t)^{n-r+1}}$
with a factor of $(1-t)^{n-r+1}$ in the denominator, 
and hence, because of the outer factor of $(1-t)^{n+1}$,  we obtain an  error term of the form $(1-t)^2 g(t)$ for some polynomial $g(t)$ as stated.
\end{proof}

\begin{remark}
Differentiating (\ref{eq:B}) with respect to $t$ and setting $t=1$, we obtain the first moment of descents in the conjugacy class $C_{\lambda,\mu}$. This agrees with Theorem~\ref{thm:mean-des}.
\end{remark}

In the special case where there are no short cycles, we obtain the following variation of Theorem~\ref{thm:expansion} that we use to establish our Central Limit Theorem on conjugacy classes. 

\begin{theorem}\label{thm:long_cycle}
Let $C_{\lambda,\mu}$ be a conjugacy class of $B_n$, let $B_n(t)=\sum_{\omega \in B_n} t^{\des_B(\omega)+1}$, and let $B_{\lambda,\mu}(t)=\sum_{\omega \in C_{\lambda,\mu}} t^{\des_B(\omega)+1}$. If $C_{\lambda,\mu}$ contains no cycles of length $1,2,\ldots,2\ell$, then
\begin{equation}
\frac{B_{\lambda,\mu}(t)}{|C_{\lambda,\mu}|}=\frac{B_n(t)}{2^n n!}+(1-t)^{\ell+1} g(t),
\end{equation}
where $g(t)$ is some polynomial in $t$. 
\end{theorem}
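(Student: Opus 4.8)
The plan is to sharpen the argument behind Theorem~\ref{thm:expansion}. Combining Proposition~\ref{prop:all} with $|C_{\lambda,\mu}|=2^nn!/z_{\lambda,\mu}$ (Lemma~\ref{lem:classsize}), write
\[
\frac{B_{\lambda,\mu}(t)}{|C_{\lambda,\mu}|}=\frac{(1-t)^{n+1}}{2^nn!}\left[z_{\lambda,\mu}\,t\,\delta_{((1^n),\emptyset)}+\sum_{k\ge 2}z_{\lambda,\mu}P_k\,t^k\right],
\]
where $P_k$ is the product of binomial coefficients appearing in Proposition~\ref{prop:all}. The hypothesis that $C_{\lambda,\mu}$ has no cycles of length $1,\ldots,2\ell$ means $m_i(\lambda)=m_i(\mu)=0$ for $1\le i\le 2\ell$; in particular the Kronecker term vanishes and every binomial factor with index $i\le 2\ell$ equals $1$, so $P_k$ only involves the quantities $N(2k-1,2i)$ with $i\ge 2\ell+1$. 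As in the proof of Theorem~\ref{thm:expansion}, $z_{\lambda,\mu}P_k$ is the value at $x=2k-1$ of a single polynomial $F(x)$ independent of $k$, of degree $n=\sum_i i\,(m_i(\lambda)+m_i(\mu))$ and with leading term exactly $x^n$.

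The crux is the degree estimate $F(x)=x^n+\bigl(\text{polynomial of degree }\le n-\ell-1\bigr)$. From \eqref{eq:mobius}, as a polynomial in $x$ we have $N(x,2i)=\tfrac{x^i}{2i}+S_i(x)$ with $\deg S_i\le i/3$: the next-largest exponent occurring is $i/d$ for the smallest odd divisor $d>1$ of $i$, or $0$ when $i$ is a power of $2$. Consequently each factor $\binom{N(x,2i)}{m}$ (and likewise $\binom{N(x,2i)+m-1}{m}$) has leading term of degree $im$ and all other terms of degree at most $im-\tfrac{2i}{3}$. In the product $F(x)$, any monomial other than $x^n$ arises by taking a non-leading term in at least one factor carrying some index $i\ge 2\ell+1$, which drops the total degree by at least $\tfrac{2i}{3}\ge\tfrac{2(2\ell+1)}{3}\ge \ell+1$, the last inequality holding for every $\ell\ge 1$. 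This establishes the claim.

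To conclude, write $z_{\lambda,\mu}P_k=(2k-1)^n+Q(2k-1)$ with $\deg Q\le n-\ell-1$, so that
\[
\frac{B_{\lambda,\mu}(t)}{|C_{\lambda,\mu}|}=\frac{(1-t)^{n+1}}{2^nn!}\sum_{k\ge 2}\bigl[(2k-1)^n+Q(2k-1)\bigr]t^k.
\]
By Proposition~\ref{prop:Bn_genfunction}, the $(2k-1)^n$ part produces $B_n(t)/(2^nn!)$ up to a polynomial multiple of $(1-t)^{n+1}$. For the $Q$-part, use the elementary identity $\sum_{k\ge 1}Q(2k-1)t^k=h(t)/(1-t)^{d+1}$ for a polynomial $h$, where $d=\deg Q$ (expand $Q(2k-1)$ in the basis $\binom{k-1}{j}$ and apply $\sum_{k\ge 1}\binom{k-1}{j}t^k=t^{j+1}/(1-t)^{j+1}$); since $d+1\le n-\ell$, multiplying by $(1-t)^{n+1}$ leaves a polynomial multiple of $(1-t)^{n-d}$ with $n-d\ge\ell+1$. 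Passing from the range $k\ge 2$ to $k\ge 1$ only subtracts a further polynomial multiple of $(1-t)^{n+1}$. Collecting terms shows $\frac{B_{\lambda,\mu}(t)}{|C_{\lambda,\mu}|}-\frac{B_n(t)}{2^nn!}$ equals $(1-t)^{\ell+1}g(t)$ for some polynomial $g$; since the hypothesis forces $n\ge 2\ell+1$ whenever $n\ge 1$ (and $n=0$ is trivial), no divisibility obstruction arises.

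The main obstacle is the degree estimate of the middle paragraph: pinning down precisely how far the degree in $(2k-1)$ drops when a single necklace-counting factor is perturbed off its leading term, and verifying that the minimal drop $\tfrac{2(2\ell+1)}{3}$ is at least $\ell+1$. Everything else is routine manipulation of generating functions already present in the proof of Theorem~\ref{thm:expansion}.
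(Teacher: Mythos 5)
Your proposal is correct and follows essentially the same route as the paper's proof: expand $B_{\lambda,\mu}(t)/|C_{\lambda,\mu}|$ via Proposition~\ref{prop:all}, view the coefficient of $t^k$ as a polynomial in $2k-1$ with leading term $(2k-1)^n$, bound the sub-leading degree by $n-\ell-1$ using the fact that any non-leading contribution from a factor of index $i\ge 2\ell+1$ drops the degree by at least $2i/3\ge \ell+1$, and convert that degree drop into a factor of $(1-t)^{\ell+1}$. Your version is, if anything, slightly more explicit than the paper's about the two sources of degree loss (Stirling index versus M\"obius divisor) and about the generating-function identity used to absorb the lower-order terms.
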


\begin{proof}
As in the proof of Theorem \ref{thm:expansion},
\begin{equation}
\begin{split}
&\frac{B_{\lambda,\mu}(t)}{|C_{\lambda,\mu}|}=\frac{1}{2^n n!}(1-t)^{n+1} \prod_{i>2\ell} (2i)^{m_i(\lambda)+m_i(\mu)} m_i(\lambda)! m_i(\mu)! \cdot \notag \\ & \hspace{2cm}\cdot \sum_{k \geq 1} t^k \prod_{i>2\ell} \binom{m_i(\lambda)+N(2k-1, 2i)-1}{m_i(\lambda)} \binom{N(2k-1, 2i)}{m_i(\mu)}.
\end{split}
\end{equation}
We note that, as a polynomial in $(2k-1)$, the leading term of $$\prod_{i>2\ell} \binom{m_i(\lambda)+N(2k-1, 2i)-1}{m_i(\lambda)} \binom{N(2k-1, 2i)}{m_i(\mu)}$$
is $(2k-1)^n$ as in the general case, but the second highest-degree term is at most $(2k-1)^{n-\ell-1}$ under the long cycle assumption. This is because a lower order term must have either some $a \neq m_i(\lambda)$ or some $b \neq m_i(\mu)$ in \eqref{eq:a}, or $d \neq 1$ in (\ref{eq:mobius}).  If some $a \neq m_i(\lambda)$ or some $b \neq m_i(\mu)$, then the power of $2k-1$ from such a term is at most $n-i<n-2\ell$. If some $d \neq 1$, 
 then the power of $2k-1$ from such a term is at most $n-(i-i/3)<n-\ell$. We then proceed similarly as in the proof of Theorem \ref{thm:expansion}.
\end{proof}

\begin{corollary}\label{cor:Bn}
Let $C_{\lambda,\mu}$ be a conjugacy class of $B_n$. If $C_{\lambda,\mu}$ contains no cycles of length $1,2,\ldots,2k$, then 
\[\E_{\lambda,\mu}[\des_B^k] = \E_{B_n}[\des_B^k].\]
\end{corollary}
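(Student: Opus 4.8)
The plan is to deduce Corollary~\ref{cor:Bn} directly from Theorem~\ref{thm:long_cycle} applied with $\ell = k$, by translating the generating-function identity into a statement about moments. First I would rewrite the normalized generating functions probabilistically: since $\des_B$ takes values in $\{0,1,\ldots,n\}$, the function $\E_{\lambda,\mu}[t^{\des_B}]$ is a polynomial in $t$, and by definition $B_{\lambda,\mu}(t)/|C_{\lambda,\mu}| = \E_{\lambda,\mu}[t^{\des_B + 1}] = t\,\E_{\lambda,\mu}[t^{\des_B}]$, and similarly $B_n(t)/(2^n n!) = t\,\E_{B_n}[t^{\des_B}]$ using $|B_n| = 2^n n!$. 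Under the hypothesis that $C_{\lambda,\mu}$ has no cycles of length $1,2,\ldots,2k$, Theorem~\ref{thm:long_cycle} with $\ell = k$ then reads $t\bigl(\E_{\lambda,\mu}[t^{\des_B}] - \E_{B_n}[t^{\des_B}]\bigr) = (1-t)^{k+1} g(t)$ for some polynomial $g$. Because the left-hand side is divisible by $t$ as a polynomial while $(1-t)^{k+1}$ is not, $t$ must divide $g$, so after cancelling we obtain $\E_{\lambda,\mu}[t^{\des_B}] - \E_{B_n}[t^{\des_B}] = (1-t)^{k+1} h(t)$ for a polynomial $h$.

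Next I would extract the first $k$ factorial moments. Differentiating the last identity $j$ times and setting $t = 1$ annihilates the right-hand side whenever $0 \le j \le k$, giving $\left.\tfrac{d^j}{dt^j}\E_{\lambda,\mu}[t^{\des_B}]\right|_{t=1} = \left.\tfrac{d^j}{dt^j}\E_{B_n}[t^{\des_B}]\right|_{t=1}$ for all such $j$. Applying the standard fact $\left.\tfrac{d^j}{dt^j}\E[t^Y]\right|_{t=1} = \E\bigl[Y(Y-1)\cdots(Y-j+1)\bigr]$ for an integer-valued random variable $Y$, this says that the $j$th falling-factorial moments of $\des_B$ on $C_{\lambda,\mu}$ and on $B_n$ coincide for every $j \le k$.

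Finally I would convert factorial moments to ordinary moments using the expansion $x^k = \sum_{j=0}^k S(k,j)\,x(x-1)\cdots(x-j+1)$, where $S(k,j)$ is the Stirling number of the second kind. This gives $\E[\des_B^k] = \sum_{j=0}^k S(k,j)\,\E\bigl[\des_B(\des_B-1)\cdots(\des_B-j+1)\bigr]$ on each of the two probability spaces, and the previous step shows each term agrees; hence $\E_{\lambda,\mu}[\des_B^k] = \E_{B_n}[\des_B^k]$. I do not expect any genuine obstacle in this deduction --- all of the real work sits in Theorem~\ref{thm:long_cycle} --- but the step requiring the most care is the bookkeeping around the extra factor of $t$ introduced by the $\des_B + 1$ convention in the generating functions, together with confirming that cancelling it does not reduce the order of vanishing of the relevant difference at $t = 1$ below $k+1$.
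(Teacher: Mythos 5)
Your proposal is correct and is exactly the deduction the paper leaves implicit: Corollary~\ref{cor:Bn} is stated without proof immediately after Theorem~\ref{thm:long_cycle}, the intended argument being precisely that the $(1-t)^{k+1}$ factor kills the first $k$ derivatives at $t=1$, so the factorial (hence ordinary) moments up to order $k$ agree. Your extra care about the factor of $t$ from the $\des_B+1$ convention is sound (since $t$ and $(1-t)^{k+1}$ are coprime, dividing by $t$ preserves the order of vanishing at $t=1$), though one could equally work with the falling factorial moments of $\des_B+1$ directly and avoid the cancellation altogether.
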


We conclude with our central limit theorem for descents on conjugacy classes of $B_n$ without short cycles. To arrive at our conclusion, we appeal to the Method of Moments, stated below. This tool allows one to translate convergence of moments into convergence in distribution, provided that all moments are finite and the limiting distribution is uniquely determined by its moments. We will apply this when the limiting distribution is the standard normal distribution, which is uniquely determined by its moments. See \cite[Section 30]{billingsley} for further details. 

\begin{theorem}[Method of Moments]
    Let $\{X_n\}_{n\geq 1}$ be a sequence of random variables with finite $k$-th moment for all $k\geq 1$. Suppose that the distribution of $X$ is determined by its moments, and for each $k\geq 1$, the $k$-th moment of $X_n$ converges to the $k$-th moment of $X$ as $n\to\infty$. Then $X_n$ converges to $X$ in distribution. 
\end{theorem}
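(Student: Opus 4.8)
The plan is to prove this classical result via the standard tightness-plus-subsequence argument; see \cite[Section 30]{billingsley}. Write $m_k := \E[X^k]$ for the moments of $X$, so that by hypothesis $\E[X_n^k]\to m_k$ for every $k\ge 1$. The three ingredients I would assemble are: (i) the sequence $\{X_n\}$ is tight; (ii) every subsequential distributional limit of $\{X_n\}$ has moments $(m_k)_{k\ge 1}$, hence is equal in distribution to $X$; and (iii) a subsequence principle upgrading (i) and (ii) to convergence of the full sequence. The hypothesis that the law of $X$ is determined by its moments is used precisely in step (ii).

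First I would establish \textbf{tightness}. Since $\E[X_n^2]\to m_2$, the second moments are uniformly bounded, say by $C := \sup_n \E[X_n^2] < \infty$. Chebyshev's inequality then gives $\pr(|X_n|>M)\le C/M^2$ uniformly in $n$, so $\sup_n \pr(|X_n|>M)\to 0$ as $M\to\infty$, which is exactly tightness. By Prokhorov's theorem, any subsequence of $\{X_n\}$ admits a further subsequence $\{X_{n_j}\}$ converging in distribution to some random variable $Y$, and it remains to identify $Y$.

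The crux, and the step requiring the most care, is to show $\E[Y^k]=m_k$ for every $k$. Convergence in distribution alone does not transmit moments to the limit, so I would supply a \textbf{uniform integrability} argument. Fixing $k$, the convergence $\E[X_n^{2k}]\to m_{2k}$ gives $\sup_n \E[X_n^{2k}]<\infty$; since $\E\big[(X_{n_j}^k)^2\big]=\E[X_{n_j}^{2k}]$ is uniformly bounded, the family $\{X_{n_j}^k\}_j$ is uniformly integrable. Combined with $X_{n_j}\Rightarrow Y$ and continuity of the map $x\mapsto x^k$, uniform integrability yields $\E[X_{n_j}^k]\to\E[Y^k]$. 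But the left side also converges to $m_k$ by hypothesis, so $\E[Y^k]=m_k=\E[X^k]$. As this holds for all $k$ and the law of $X$ is determined by its moments, $Y$ and $X$ are equal in distribution. Thus every subsequence of $\{X_n\}$ has a further subsequence converging in distribution to $X$; since convergence in distribution is metrizable (for instance by the L\'evy--Prokhorov metric), the standard subsequence principle forces $X_n\Rightarrow X$, completing the proof. The main obstacle is the uniform integrability step, where passing from the hypothesized moment convergence to control of the subsequential limit requires invoking the next even moment $2k$ rather than $k$ itself.
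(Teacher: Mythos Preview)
Your proof is correct and follows the standard tightness-plus-uniform-integrability argument. Note, however, that the paper does not actually prove this theorem: it is stated as a classical tool and the reader is referred to \cite[Section 30]{billingsley} for details, so there is no ``paper's own proof'' to compare against. Your argument is essentially the one found in that reference.
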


\begin{theorem}[Theorem~\ref{thm:CLTMain}]
For every $n\geq 1$, let $C_{\lambda_n, \mu_n}$ be a conjugacy class of $B_n$, and define $X_n$ to be the descent statistic $\des_B$ on $C_{\lambda_n,\mu_n}$. Suppose that for all $i$, the number of cycles of length $i$ in $\lambda_n$ and $\mu_n$ approaches 0 as $n\to\infty$. Then for sufficiently large $n$, $X_n$ has mean $n/2$ and variance $(n+1)/12$, and as $n\to\infty$, the random variable $(X_n-n/2)/\sqrt{(n+1)/12}$ converges in distribution to a standard normal distribution.
\end{theorem}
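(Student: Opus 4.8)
The plan is to transfer the problem from the conjugacy classes $C_{\lambda_n,\mu_n}$ to the whole group $B_n$ using \Cor{cor:Bn}, and then to invoke the Method of Moments together with the central limit theorem on $B_n$ recorded in \Prop{prop:norm-allBn}. First, the hypothesis should be read as follows: for each fixed $i$ the number of length-$i$ cycles in $\lambda_n$ and in $\mu_n$ is a sequence of nonnegative integers tending to $0$, hence eventually $0$; taking the maximum over the finitely many indices $i\in\{1,\dots,2k\}$, for every fixed $k$ there is an $N_k$ such that for all $n\ge N_k$ the class $C_{\lambda_n,\mu_n}$ has no cycles of length $1,2,\dots,2k$. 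Applying this with $k=2$: for $n\ge N_2$, \Thm{thm:mean-des} gives $\E_{\lambda_n,\mu_n}[\des_B]=n/2$ since $\Delta^2(\lambda_n,\mu_n)=0$, and \Cor{cor:Bn} gives $\E_{\lambda_n,\mu_n}[\des_B^2]=\E_{B_n}[\des_B^2]$; combined with \Prop{prop:norm-allBn} this shows $X_n$ has mean $n/2$ and variance $(n+1)/12$ for all sufficiently large $n$, which is the first assertion.

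Now fix $k\ge 1$, let $W_n=(\des_B-n/2)/\sqrt{(n+1)/12}$ be the standardized descent statistic on $B_n$, and let $Z_n=(X_n-n/2)/\sqrt{(n+1)/12}$ be the random variable in the theorem. Expanding $(X_n-n/2)^k=\sum_{j=0}^k\binom{k}{j}(-n/2)^{k-j}X_n^j$ and applying \Cor{cor:Bn} term by term (for $n\ge N_k$ the class has no cycles of length $1,\dots,2k$, so $\E_{\lambda_n,\mu_n}[\des_B^j]=\E_{B_n}[\des_B^j]$ for every $j\le k$), we obtain, for all $n\ge N_k$,
\[
\E_{\lambda_n,\mu_n}\big[(X_n-n/2)^k\big]=\E_{B_n}\big[(\des_B-n/2)^k\big], \qquad\text{hence}\qquad \E_{\lambda_n,\mu_n}[Z_n^k]=\E_{B_n}[W_n^k].
\]
Since $0\le\des_B\le n$, all moments in sight are finite, and the standard normal is determined by its moments; therefore, by the Method of Moments, it suffices to prove that $\E_{B_n}[W_n^k]$ converges to the $k$th moment of the standard normal for every $k$.

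For this last point, recall that the type $B$ Eulerian polynomial $B_n(t)/t$ is real-rooted (a classical fact), so the distribution of $\des_B$ on $B_n$ coincides with that of a sum of $n$ independent Bernoulli random variables. As its variance $(n+1)/12$ tends to infinity while the summands are uniformly bounded, standard moment estimates for such sums show that the standardized moments of $W_n$ are bounded uniformly in $n$, so $\{W_n^{2m}\}_n$ is uniformly integrable for each $m$. Combined with the convergence in distribution $W_n\to N(0,1)$ of \Prop{prop:norm-allBn}, this yields $\E_{B_n}[W_n^k]\to\E[N(0,1)^k]$ for all $k$, and the theorem follows from the previous paragraph. (Alternatively, one may simply note that the proof of \Prop{prop:norm-allBn} in \cite{CM2012} already proceeds via the method of moments, so this moment convergence is available directly.)

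I expect the main obstacle to be precisely this final step — promoting the distributional central limit theorem on $B_n$ to convergence of all moments, which is what makes the Method of Moments applicable. The rest is routine bookkeeping: keeping track of the $k$-dependent thresholds $N_k$ (harmless, since the Method of Moments only requires, for each fixed $k$, that the $k$th moment eventually agree with that on $B_n$), and the transfer identity of \Cor{cor:Bn}, whose proof in turn rests on the generating-function analysis of \Thm{thm:long_cycle}.
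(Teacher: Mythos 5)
Your proof is correct and follows essentially the same route as the paper: after reducing the hypothesis to ``eventually no cycles of length $\le 2k$,'' you transfer the first $k$ moments from $C_{\lambda_n,\mu_n}$ to $B_n$ via the binomial expansion and Corollary~\ref{cor:Bn}, then conclude by the Method of Moments together with Proposition~\ref{prop:norm-allBn}. The only difference is that you explicitly justify the convergence of the standardized moments of $\des_B$ on $B_n$ to the normal moments (via real-rootedness of the type $B$ Eulerian polynomial and uniform integrability, or by noting that the cited central limit theorem is itself proved by moments), a step the paper leaves implicit in its appeal to Proposition~\ref{prop:norm-allBn}.
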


\begin{proof}
    The values for the mean and variance follow from applying Corollary~\ref{cor:Bn} and Proposition~\ref{prop:norm-allBn} on the first two moments of $X_n$ with the hypothesis that there are no cycles of length $1,2,3$, or $4$ when $n$ is sufficiently large. For the asymptotic behavior, we fix $k$ and expand
    \[\left(\frac{X_n-n/2}{\sqrt{(n+1)/12}}\right)^k = \frac{1}{((n+1)/12)^{k/2}} \sum_{i=0}^k {k\choose i} \left( \frac{n}{2}\right)^{k-i}  X_n^i.\]
    Linearity of expectation then implies that the $k$th moment of $(X_n-n/2)/\sqrt{(n+1)/12}$ on 
    $B_n$ or $C_{\lambda_n,\mu_n}$ can be expressed in terms of the first $k$ moments of $X_n$ on those respective sets.  Applying Corollary~\ref{cor:Bn} with the hypothesis that there are no cycles of length $1,2,\ldots,2k$ when $n$ is sufficiently large, we conclude that
    \[\lim_{n\to\infty} \E_{B_n}\left[\left(\frac{X_n-n/2}{\sqrt{(n+1)/12}}\right)^k\right] = \lim_{n\to\infty} \E_{\lambda_n,\mu_n}\left[\left(\frac{X_n-n/2}{\sqrt{(n+1)/12}}\right)^k\right].\]
    The result now follows from the Method of Moments and Proposition~\ref{prop:norm-allBn}.
\end{proof}

\section{Conclusion}\label{section:conclusion}

In this paper, we have introduced a notion of partial colored permutations, and we used this to study the moments of statistics $X:\mathfrak{S}_{n,r}\to \mathbb{R}$ on conjugacy classes $C_{\rpart}$ of $\mathfrak{S}_{n,r}$ without short cycles. In the case of $\mathfrak{S}_n$, our results reduce to ones in \cite{GRWCPermutationStatistics}. Prior to the final version of our definitions, we considered other potential definitions and obtained variations of Theorem~\ref{thm:MainIndependence} that were weaker. The current version properly generalizes the results in \cite{GRWCPermutationStatistics}, and given the numerous connections between \cite{GRWCPermutationStatistics} and \cite{hamaker2022characters}, one natural problem is the following. 

\begin{problem}
    Use the representation theory of $B_n$ and $\mathfrak{S}_{n,r}$ to establish analogues of the results in \cite{hamaker2022characters} for colored permutation statistics with degree at most $m$.
\end{problem}

In the case of the hyperoctahedral groups $B_n$, we also derived explicit formulas for the means of various statistics, and interestingly, the $k$th moment of a statistic with degree at most $m$ may still be independent of the number of cycles of length $1,2,\ldots,mk$. The statistics on $B_n$ considered in this paper are of degree $1$ or $2$, and we found that their means on $C_{\lambda,\mu}$ depended only on $n$ and differences of the form $\Delta^k(\lambda,\mu)=m_1^k(\lambda)-m_1^k(\mu)$.

\begin{problem}
    Undertake a study of statistics on $B_n$ whose mean (or higher moments) on $C_{\lambda,\mu}$ depend only on $n$ and differences of the form $m_i^k(\lambda)-m_i^k(\mu)$.
\end{problem}

Since $\mathfrak{S}_n$ and $B_n$ are respectively the type $A$ and type $B$ Coxeter groups, the following is another natural problem to consider next.

\begin{problem}
    Establish analogues of the results in this paper for the type $D$ Coxeter groups.
\end{problem}

One further extension of this problem is to consider the infinite complex reflection groups $G(r,p,n)$. Each $G(r,p,n)$ is the index $p\mid r$ subgroup of $\mathfrak{S}_{n,r}$, and when $r=p=2$, these reduce to the type $D$ Coxeter groups.

\section*{Acknowledgements}

We thank Yan Zhuang for alerting us to the work of Hamaker and Rhoades \cite{hamaker2022characters}, and Zach Hamaker for helpful discussions regarding connections to \cite{hamaker2022characters}. We would also like to thank the referees for reviewing this paper and providing feedback to significantly improve it. 

An extended abstract for this paper previously appeared in the Proceedings for the 36th International Conference on Formal Power Series and Algebraic Combinatorics (FPSAC) \cite{GRWCFPSAC}.

\bibliographystyle{alphaurl}
\bibliography{Bibliography}

\end{document}